\documentclass[reqno,11pt]{amsart}
\usepackage{ytableau}
\usepackage{youngtab}
\usepackage[bbgreekl]{mathbbol} 
\DeclareSymbolFontAlphabet{\mathbbm}{bbold}
\DeclareSymbolFontAlphabet{\mathbb}{AMSb}
\usepackage{amsmath, amssymb, amsthm, amsfonts, graphicx, mathrsfs, cancel, color, tikz, mathtools, nccmath, needspace, comment, stmaryrd, array}
\usepackage{thmtools}
\usepackage[a4paper,margin=0.9in]{geometry}

\usepackage[pagebackref=true,bookmarks=true,colorlinks=true,linktoc=page,citecolor=darkgreen,linkcolor=blue,urlcolor=mediumblue,unicode]{hyperref}

\usepackage[shortlabels]{enumitem}

\newenvironment{enua}{\begin{enumerate}[label=\textup{(\alph*)}]
}{\end{enumerate}}

\newcommand{\define}[4]{\expandafter#1\csname#3#4\endcsname{#2{#4}}}
\forcsvlist{\define{\DeclareMathOperator}{}{}}{ad,Ad,codim,GL,SL,vdim} 
\forcsvlist{\define{\newcommand}{\mathbf}{b}}{A,C,D,H,S,T,v,w,Z,a,b,i,j,k,1,K} 
\forcsvlist{\define{\newcommand}{\mathbb}{bb}}{A,B,D,H,N,Z,F,Q,R,P,S,C,T,L,G,V} 
\forcsvlist{\define{\newcommand}{\mathcal}{c}}{A,B,C,D,E,F,G,H,I,J,K,L,M,N,O,P,Q,R,S,T,U,V,W,X,Y,Z} 
\forcsvlist{\define{\newcommand}{\mathfrak}{f}}{a,h,g,gl,H,I,p,q,s,t,u,S,sl,z} 
\forcsvlist{\define{\newcommand}{\mathrm}{}}{crit,id,Id,pt,op,sm,gdim,aff,Ann,Aut,asph,colim,End,ext,Ext,Frac,gr,Gr,Hom,Ind,Irr,Ker,KZ,Mat,mon,Mod,Proj,rad,Res,RHom,rk,Span,sph,Sch,Stab,sgn,Tor,Tr,triv,ind,type} 
\forcsvlist{\define{\DeclareMathOperator}{\mathsf}{}}{BM,Bun,Coh,Coker,Cone,Fl,Fun,Graph,Hecke,Higgs,loc,Pol,Quot,Rep,Spec,Supp,Std,Vect} 
\def\-{\text{-}}%
\allowdisplaybreaks

\newcommand\tabt{{\mathfrak{t}}}
\newcommand\tabs{{\mathfrak{s}}}
\newcommand\tabp{{\mathfrak{p}}}
\newcommand\tabq{{\mathfrak{q}}}
\newcommand\tabu{{\mathfrak{u}}}

\newcommand\bbla{\bblambda}

\definecolor{mediumblue}{rgb}{0.0, 0.0, 0.8}
\colorlet{darkgreen}{green!50!black}

\renewcommand*{\backref}[1]{}
\renewcommand*{\backrefalt}[4]{%
 \ifcase #1 No citations.
 \or [Page~#2.]
 \else [Pages~#2.]
 \fi%
}
\usepackage{cleveref}

\DeclareMathOperator{\ch}{char}

\newcommand{\std}{\Std}

\allowdisplaybreaks

\newtheorem{lem}{Lemma}[subsection]
\newtheorem{thm}[lem]{Theorem}
\newtheorem{cor}[lem]{Corollary}
\newtheorem{prop}[lem]{Proposition}

\newtheorem{mainthm}{Theorem}

\newtheoremstyle{lscite}{}{}{\itshape}{}{\bfseries}{}{ }{}

\theoremstyle{lscite}
\newtheorem{lemciting}[lem]{Lemma}
\newtheorem{propciting}[lem]{Proposition}
\newtheorem{thmciting}[lem]{Theorem}
\newtheorem{corciting}[lem]{Corollary}
\newtheorem{conjciting}[lem]{Conjecture}

\theoremstyle{definition}
\newtheorem{defn}[lem]{Definition}
\newtheorem{eg}[lem]{Example}

\theoremstyle{remark}

\newtheorem*{rmk}{Remarks}

\crefname{defn}{Definition}{Definitions}
\crefname{thm}{Theorem}{Theorems}
\crefname{prop}{Proposition}{Propositions}
\crefname{lem}{Lemma}{Lemmas}
\crefname{cor}{Corollary}{Corollaries}
\crefname{conj}{Conjecture}{Conjectures}
\crefname{section}{Section}{Sections}
\crefname{subsection}{Subsection}{Subsections}
\crefname{chapter}{Chapter}{Chapters}
\crefname{eg}{Example}{Examples}

\Crefname{defn}{Definition}{Definitions}
\Crefname{thm}{Theorem}{Theorems}
\Crefname{prop}{Proposition}{Propositions}
\Crefname{lem}{Lemma}{Lemmas}
\Crefname{cor}{Corollary}{Corollaries}
\Crefname{conj}{Conjecture}{Conjectures}
\Crefname{section}{Section}{Sections}
\Crefname{subsection}{Subsection}{Subsections}
\Crefname{chapter}{Chapter}{Chapters}
\Crefname{eg}{Example}{Examples}

\crefname{thmciting}{Theorem}{Theorems}
\crefname{propciting}{Proposition}{Propositions}
\crefname{lemciting}{Lemma}{Lemmas}
\crefname{corciting}{Corollary}{Corollaries}
\crefname{conjciting}{Conjecture}{Conjectures}

\Crefname{thmciting}{Theorem}{Theorems}
\Crefname{propciting}{Proposition}{Propositions}
\Crefname{lemciting}{Lemma}{Lemmas}
\Crefname{corciting}{Corollary}{Corollaries}
\Crefname{conjciting}{Conjecture}{Conjectures}

\newcommand\cala{\mathcal{A}}

\newcommand\calf{\mathcal{F}}

\newcommand\calh{\mathcal{H}}

\renewcommand\geq\geqslant
\renewcommand\leq\leqslant

\newcommand\la\lambda
\newcommand\La\Lambda
\newcommand\al\alpha
\newcommand\si\sigma
\newcommand\de\delta
\newcommand\ga\gamma

\newcommand{\ColorTableau}[2][]{
  \begin{tikzpicture}[scale=0.5, baseline=(current bounding box.center)]
    \foreach \c/\r/\col in {#1} {
       \ifx\col\empty\else
         \fill[\col] (\c-0.5, 1-\r-0.5) rectangle ++(1,1);
       \fi
    }
    \foreach \row [count=\r] in {#2} {
        \pgfmathsetmacro{\y}{1-\r} 
        \foreach \val [count=\c] in \row {
            \draw[thick, black] (\c-0.5, \y-0.5) rectangle ++(1,1);
            \node at (\c, \y) {\small \val};
        }
    }
  \end{tikzpicture}
}

\usepackage{youngtab}
\newcommand{\iyng}[1]{{\tiny\Yvcentermath1 \yng(#1)}}
\newcommand{\iyoung}[1]{{\tiny\Yvcentermath1 \young(#1)}}

\numberwithin{equation}{subsection}

\begin{document}

\title{On cellularity of Hecke Algebras for Wreath Products}

\begin{abstract}
The (generalized) Hu algebra is a nontrivial quantization of the wreath product $\Sigma_m \wr \Sigma_d$ between symmetric groups, whose representation theory controls the Hecke algebra of the complex reflection group $G(d,d,md)$. 
In this paper, we construct a unified basis for this algebra and establish its cellular algebra structure in the case $d = 2$. 
As an application, our construction provides an elementary realization of the simple modules for the Hecke algebra of type $D_{2m}$ that are parameterized by bipartitions of size $(m,m)$.
\end{abstract}

\author[Berta Hudak]{Berta Hudak}
\address{National Center for Theoretical Sciences, Taipei, Taiwan}
\email{berta.hudak@ncts.ntu.edu.tw}

\author{Chun-Ju Lai}
\address{Institute of Mathematics, Academia Sinica, Taipei 106319, Taiwan}
\email{cjlai@gate.sinica.edu.tw}



\maketitle
\section{Introduction}
\subsection{Background}
Introduced by Graham and Lehrer in \cite{GL96}, 
the notion of the cellular algebras has proved to be foundational in modular representation theory of finite dimensional algebras over a field $K$.
The framework provides a systematic construction and classification of the irreducibles via non-zero heads of cell modules $W(\la)$, generalizing the construction of irreducible modules in terms of the Specht modules $S^\la$ for the non-semisimple Hecke algebra $\cH_q(\Sigma_d)$ of the symmetric group $\Sigma_d$. 
Geck \cite{Ge07} proved that all Hecke algebras of finite Coxeter groups are cellular, provided they either have equal parameters or certain unequal parameters. 
For the complex reflection group $G(m,1,d)$, the (integral) cellular structures of the associated Hecke algebras have been studied intensively in the past decade (see \cite{Bo22} and the references therein). 

It is important to better understand the cell modules for the Hecke algebras of complex reflection group $G(r,p,n)$ as they play a crucial role in the second named author's program regarding the conjecture of Ginzburg, Guay, Opdam, and Rouquier (see Section \ref{sec:GGOR}). 
However, even in the smallest nontrivial case $G(2,2,n) = D_n$, the known construction of cell modules relies heavily on thorough understanding of Kazhdan--Lusztig cells \cite{KL79}, Lusztig's $a$-function, and asymptotic Hecke algebras \cite{Lu03}.

The approach we are taking utilizes the following Morita equivalence theorem \cite{hu02} due to Jun Hu, under a natural invertibility condition, i.e.\ $q^i \neq -1$ for $0\leq i \leq 2m-1$:
\begin{equation}\label{eq:MorD}   
\calf: 
\cH_{q}(D_{2m})\-\Mod 
\overset{\sim}{\to} 
\cala(m)\-\Mod
\oplus\prod_{1\leq i \leq m-1}\calh_q(\Sigma_i \times \Sigma_{2m-i})\-\Mod.
\end{equation}
In other words, this equivalence reduces the study of the Hecke algebra of type $D_{2m}$ to the study of a special subalgebra $\cala(m) \subseteq \cH_q(\Sigma_{2m})$, called the {\em Hu algebra}.
The odd rank case (i.e.\ $\cH_q(\Sigma_{2m-1})$) is simpler as it is similar to types B and C.  

Hu's original construction of $\cala(m)$ can be considered of type B as it relies on deep properties of the Jucys--Murphy elements in the type B Hecke algebra of unequal parameters $(1,q)$.
Recently,  a ``type $A$'' construction of the Hu algebra $\cala(m)$ was made available using the framework of the quantum wreath products  \cite{LNX}. 
\subsection{Quantum wreath products}
One should think of the Hu algebra $\cA(m)$ as a nontrivial quantization of the wreath product $\Sigma_m \wr \Sigma_2$, which is not a Coxeter group unless $m \le 2$.
The realization of $\cA(m)$ as a quantum wreath product $\cH_q(\Sigma_m) \wr \cH(2)$ (see \cref{def:QWP} for details) allows us to study its analog $\cH_q(\Sigma_m) \wr \cH(d)$ for $d> 2$, called  the {\em generalized Hu algebra}.
The representation theory of  $\cH_q(\Sigma_m) \wr \cH(d)$ is closely tied to the representation theory of the Hecke algebra for the complex reflection group $G(d,d,dm)$.

It is natural to investigate the potential cellular algebra structure of $\cH_q(\Sigma_m) \wr \cH(d)$ for an arbitrary $d$. The main difficulty arises from the key feature that each generator $H_i$ in the quantum wreath product has a complicated quadratic relation that locally takes the form $H^2 = z_m$ (see Section \ref{subs:hu} for details), where the degree zero coefficient $z_m \in \cH_q(\Sigma_m)^{\otimes 2}$ does not have a corresponding ``square root'' element in $\cH_q(\Sigma_m)^{\otimes 2}$.
Therefore, the representation theory of $\cH_q(\Sigma_m) \wr \cH(d)$ is essentially different from the representation theory of a group algebra.

\subsection{A new cellular basis}

Consider the index poset $\Omega$ which parametrizes the Specht modules $S^\bbla\in \cH_q(\Sigma_m) \wr \cH(d)$-Mod constructed in \cite{LNXII}.
Each element $\bbla \in \Omega$ is a multiparititons of $d$ with support being a subset of the set $\Pi_m$ of partitions of $m$.
The  tableaux set $M(\bbla)$ (see \cref{defn:m-set}) consists of triples $\bbT = (w_\bbT, \tabs_\bbT, \tabt_\bbT)$ consisting of (i) a certain shortest coset representative $w_{\mathbb{T}}$,
    (ii) a certain tuple of standard tableaux $\mathfrak{s}_{\mathbb{T}} = (\tabs_\bbT^1,\tabs_\bbT^2,\dots)$ of total size $d$, and
    (iii) a certain tuple of standard tableaux $\mathfrak{t}_{\mathbb{T}} = (\tabt_\bbT^1, \dots, \tabt_\bbT^d)$ of size $(m, m, \dots, m)$.
We construct a new cellular basis element $C_{\bbT,\bbT'}^{\bbla}$ (see \cref{def:CTT}) using a variant of the full Young symmetrizer $z_{\nu}^{\lambda}$ sitting inside the generic Hecke algebra $\mathcal{H}_{(0,f_{\lambda})}(\Sigma_{d})$. 
This basis does not specialize to existing cellular bases associated to wreath products in the literature (for example, \cite{GG13, G20, T24}) due to the interplay with $z_m$.
\begin{mainthm}[{\cref{thm:cellularity}}]\label{thm:A}
The Hu algebra $\cA(m) \cong \cH_q(\Sigma_m) \wr \cH(2)$ is a cellular algebra.
\end{mainthm}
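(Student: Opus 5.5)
We verify the Graham--Lehrer axioms directly for the data $(\Omega, M, C, \ast)$ of \cref{def:CTT}, specialized to $d=2$. Here $\Omega$ consists of two kinds of labels. The first kind is a pair of distinct partitions $\{\la,\mu\}\subseteq\Pi_m$, each carrying the one-box multipartition of $1$. The second kind is a single partition $\la\in\Pi_m$ carrying a partition $\nu$ of $2$, so $\nu\in\{(2),(1,1)\}$. We order $\Omega$ lexicographically: first by the dominance order on the underlying partitions of $m$, then by dominance on $\nu$. The anti-involution $\ast$ is taken to fix $H_1$ and to act on $\cH_q(\Sigma_m)^{\otimes 2}$ by the usual anti-involution $T_w\mapsto T_{w^{-1}}$ in each tensor factor. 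Before using it, we check that $\ast$ respects the defining relations of the quantum wreath product in \cref{def:QWP}. The only nontrivial point is that $z_m^\ast=z_m$, together with compatibility with the twisted commutation $H_1(a\otimes b)=(b\otimes a)H_1$.

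\textbf{Step 1 (basis).} We show that $\{C^{\bbla}_{\bbT,\bbT'}\}$ is a basis. First, a counting argument: $\sum_{\bbla}|M(\bbla)|^2=2(m!)^2=\dim\cA(m)$. This uses $\sum_{\la\neq\mu}2\,f_\la^2f_\mu^2+\sum_\la 2 f_\la^4=2(\sum f_\la^2)^2$; in the diagonal case the two choices of $\nu$, each with one standard tableau, contribute $f_\la^4$ apiece. Second, each $C^{\bbla}_{\bbT,\bbT'}$ expands as a leading term $T_{w_\bbT}\,(m_{\tabt_\bbT\tabt_{\bbT'}})\,z^{\la}_{\nu}\,T_{w_{\bbT'}}^{\ast}$ plus terms in higher cells. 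Here $m_{\tabt\tabt'}$ are Murphy basis elements of $\cH_q(\Sigma_m)^{\otimes 2}$, and the leading part of $z^\la_\nu$ is $1\pm H_1/\sqrt{\cdot}$, realised in the generic algebra $\cH_{(0,f_\la)}(\Sigma_2)$. Unitriangularity against the standard basis $\{(m_{\tabt\tabt'})H_1^{\epsilon}\}$ then gives linear independence. This step is possible because, on the $\la\otimes\la$-block of Murphy elements, $z_m$ acts modulo higher terms as the scalar $f_\la$. That is exactly why the symmetrizer lives in $\cH_{(0,f_\la)}(\Sigma_2)$.

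\textbf{Step 2 (anti-involution axiom).} We check $(C^{\bbla}_{\bbT,\bbT'})^\ast=C^{\bbla}_{\bbT',\bbT}$. This should be immediate from the symmetric form of the definition, since $z^\la_\nu$ is $\ast$-stable.

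\textbf{Step 3 (multiplication axiom).} We show that for each generator $h$ (either $H_1$ or $T_i\otimes1$, $1\otimes T_i$),
\[
h\,C^{\bbla}_{\bbT,\bbT'}\equiv\sum_{\bbS}r_h(\bbS,\bbT)\,C^{\bbla}_{\bbS,\bbT'}\pmod{\cA(m)^{>\bbla}},
\]
with coefficients independent of $\bbT'$. For the generators of $\cH_q(\Sigma_m)^{\otimes2}$ this follows from Murphy's cellularity in each factor, transported through $H_1$ via the twisted commutation. For the off-diagonal labels, $H_1$ just swaps the two coset representatives $w_\bbT\in\{1,H_1\}$, except that $H_1^2=z_m$ produces a term. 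This is handled by the following key lemma: modulo higher cells,
\[
z_m\,(m_{\tabt}\otimes m_{\tabt'})\equiv f_\la\,(m_{\tabt}\otimes m_{\tabt'}).
\]
We would prove this by expanding $z_m$ as a sum over $T_w\otimes T_{w^{-1}}$-type terms and applying Schur orthogonality for Murphy elements, as in the earlier computations of $z_m$ in Section \ref{subs:hu}. For the diagonal labels, the claim reduces to $H_1 z^\la_\nu\equiv c_\nu z^\la_\nu$ in $\cH_{(0,f_\la)}(\Sigma_2)$, where $c_\nu=\pm\sqrt{f_\la}$ or the appropriate eigenvalue. This holds because $z^\la_\nu$ is a Young symmetrizer there.

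\textbf{Main obstacle.} We expect the main difficulty in Step 3 to be controlling the error terms. Specifically, we must show that the discrepancy between $z_m$ and the scalar $f_\la$ really lands in $\cA(m)^{>\bbla}$, and not merely in the span of Murphy elements of higher shape in one tensor factor. This requires that ideals generated by higher-shape Murphy elements in either factor are contained in the ideal spanned by $C$'s with larger labels. We would first prove this ideal-compatibility statement separately, as a filtration lemma for $\cA(m)$ indexed by the poset $\Omega$. We would then deduce Step 3 from it.
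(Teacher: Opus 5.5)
Your treatment of the diagonal labels $e_\la^\nu$ has a genuine gap.

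You describe $C^{\bbla}_{\bbT,\bbT'}$ as $(m_{x_1y_1}\otimes m_{x_2y_2})z^\la_\nu$ plus higher-cell terms. You then reduce C2 to $\ast$-stability of $z^\la_\nu$, and C3 for $H_1$ to the eigenvalue identity $T_1z^\la_\nu=\pm\sqrt{f_\la}\,z^\la_\nu$ in $\cH_{(0,f_\la)}(\Sigma_2)$. But \cref{def:CTT} permutes the \emph{second} tableau index by $x$. So by \cref{lem:d=2}, with $x_i=\tabt^i_\bbT$, $y_i=\tabt^i_{\bbT'}$ and $c=\pm\sqrt{f_\la}^{-1}$, the element is exactly
\[
C^\bbla_{\bbT,\bbT'}=C^\la_{x_1,y_1}\otimes C^\la_{x_2,y_2}+c\,(C^\la_{x_1,y_2}\otimes C^\la_{x_2,y_1})H_1 ,
\]
with no correction terms. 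Its difference from $(C^\la_{x_1,y_1}\otimes C^\la_{x_2,y_2})(1+cH_1)$ lies in the same $(\la,\la)$-cell, not a higher one.

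This twist is what makes the axioms hold, because $H_1$ does not commute with $B\otimes B$: $H_1(a\otimes b)=(b\otimes a)H_1$. Run your reduction on the untwisted element $D_{x,y}:=(C^\la_{x_1,y_1}\otimes C^\la_{x_2,y_2})(1+cH_1)$:
\[
H_1D_{x,y}=(C^\la_{x_2,y_2}\otimes C^\la_{x_1,y_1})(H_1+c\,z_m)\equiv \pm\sqrt{f_\la}\,D_{(x_2,x_1),(y_2,y_1)}.
\]
The right index changes, so C3 fails whenever $y_1\neq y_2$. Similarly,
\[
D_{x,y}^\ast=C^\la_{y_1,x_1}\otimes C^\la_{y_2,x_2}+c\,(C^\la_{y_2,x_2}\otimes C^\la_{y_1,x_1})H_1\neq D_{y,x}.
\]
With the twist, the same computation gives $H_1C^\bbla_{\bbT,\bbT'}\equiv\pm\sqrt{f_\la}\,C^\bbla_{(e,\tabs_\bbT,(x_2,x_1)),\bbT'}$. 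It uses $H_1^2=z_m$, centrality of $z_m$, \cref{cor:zmaction} and $c^2f_\la=1$. The swap now lands entirely on the left index, which is exactly what C3 allows. So the eigenvalue identity is used, but only together with the tableau twist, which your argument never engages. Once you work with the exact formulas of \cref{lem:d=2}, Steps 2 and 3 become the short direct computations the paper does.

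Some smaller points.

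\textbf{Step 1.} The expansion is not unitriangular. For the same tableau data $(x_1,x_2),(y_1,y_2)$, the elements for $e_\la^{(2)}$ and $e_\la^{(1,1)}$ have the same $B\otimes B$-part and differ only in the sign of the $H_1$-term. Spanning therefore requires inverting a $2\times 2$ system, which is where $\ch K\neq 2$ enters; you never use it.

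\textbf{The scalar lemma for $z_m$.} Do not attack it by expanding $z_m$: no explicit formula is available (see the remark on $z_2$), and nothing in \cref{subs:hu} computes one. It follows from centrality of $z_m$ and its scalar action on $S^{\la}\otimes S^{\la'}$, as in \cref{cor:zmaction}. In the off-diagonal case the scalar is $f_{\la,\mu}$, not $f_\la$.

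\textbf{The order on $\Omega$.} It must be made precise (what is the ``underlying partition'' of $[\la,\mu]$?). Replacing either tensor shape by a lower one must give a lower label; for example, compare the larger shape first, then the smaller, then $\nu$. With that in place, your filtration lemma is a sensible way to justify the step the paper handles by appeal to \cref{def:domPO}.

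Your handling of $B\otimes B$ on the elements with $w_\bbT=s_1$ via the twisted commutation is correct. There the commutation interchanges the roles of $r_{b_1}$ and $r_{b_2}$.
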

Our cellular basis recovers established bases in the extremal cases. When $d=1$, the basis element $C_{\bbT,\bbT'}^{\bbla}$  specializes exactly to the Murphy basis element for the Hecke algebra $\mathcal{H}_q(\Sigma_m)$.   
When $m=1$, the generalized Hu algebra becomes the generic Hecke algebra $\mathcal{H}_{(0,(1+q)^2)}(\Sigma_d)$. 
In this scenario, the element $C_{\bbT,\bbT'}^{\bbla}$ coincides with the full symmetrizer basis element for this algebra.  

\subsection{Cell modules versus Specht modules}
The cell modules constructed by Geck are intrinsically tied to geometry. 
However, their structures remain notoriously elusive for general Hecke algebras --
with one exception of type A, where these modules admit a realization \cite{MP05} via the tableaux-based approach of Dipper, James, and Murphy \cite{djm,mur92}.

Such an identification remains only partially understood for type $B$ (with unequal parameters) and is unknown beyond types $A$ and $B$.
For example, the complex reflection group  $G(m,1,d)$ has a quantization known as the Ariki-Koike algebra \cite{AK94} (or the cyclotomic Hecke algebra \cite{djm}) denoted by
$\cH_{\mathbf{q}}(C_m\wr \Sigma_d)$. 
Under certain assumptions on the parameters $\mathbf{q}$, one can combine results on rational Cherednik algebras by Shan \cite[Lemma 3.1]{Sh11} and by Chlouveraki, Gordan, and Griffeth \cite[Proposition 4.6]{cgg12} to deduce that $[W(\lambda)] = [S^\lambda] \in K(\cH_{\mathbf{q}}(C_m\wr \Sigma_d))$.
In other words, the best we know is that at the Grothendieck group level, Geck's cell modules agree with the Specht modules over $\cH_{\mathbf{q}}(C_m\wr \Sigma_d)$.

Our cellularity result extends such a module identification to the case of the Hu algebras (and hence of type D).
\begin{mainthm}[\cref{thm:CellIsSpecht}]\label{thm:B}
For $\bbla \in \Omega$, the cell module $W(\bbla)$ is isomorphic to the Specht module $ S^\bbla$ over the Hu algebra $\cA(m)$.
\end{mainthm}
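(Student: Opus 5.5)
The plan is to compare both modules with one and the same explicit quotient of a left ideal of $\cA(m)$, and then to compare their dimensions.

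First, I recall the cell module construction of Graham--Lehrer for the cellular basis $\{C^{\bbla}_{\bbT,\bbT'}\}$ of \cref{thm:A}. Let $\cA^{\rhd\bbla}$ be the span of the basis elements indexed by $\bbmu\rhd\bbla$. Fix $\bbT'\in M(\bbla)$. Then
\[
W(\bbla)\cong \bigl(\cA^{\rhd\bbla}+\cA(m)\, C^{\bbla}_{\bbT^{\bbla},\bbT'}\bigr)/\cA^{\rhd\bbla},
\]
where $\bbT^{\bbla}$ is the distinguished (initial) tableau triple, with trivial coset representative and initial tableaux. Choosing $\bbT'=\bbT^{\bbla}$ as well, the generating element $C^{\bbla}_{\bbT^{\bbla},\bbT^{\bbla}}$ should be, by \cref{def:CTT}, a product of two factors. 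One is the Murphy element $x_{\tabt^{\bbla}}$ for the $\cH_q(\Sigma_m)^{\otimes 2}$ part. The other is the full symmetrizer $z^{\la}_{\nu}$ sitting in the generic Hecke algebra $\cH_{(0,f_\la)}(\Sigma_2)$ generated by $H_1$.

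Second, I recall the construction of $S^{\bbla}$ from \cite{LNXII}. I expect it to be an induced or $H$-twisted module built from tensor products of Specht modules $S^{\la}\otimes S^{\mu}$ of $\cH_q(\Sigma_m)^{\otimes 2}$, cut out by a Young-symmetrizer-type condition on $H_1$. Concretely, I would realize $S^{\bbla}$ as the cyclic module $\cA(m)\, v$ generated by $v=x_{\tabt^{\bbla}}z^{\la}_{\nu}$ modulo the more dominant ideal; when $\la\neq\mu$ it is instead an induced module. With that realization in hand, I define a map
\[
\Phi: W(\bbla)\to S^{\bbla},\qquad C^{\bbla}_{\bbT,\bbT^{\bbla}}+\cA^{\rhd\bbla}\mapsto C^{\bbla}_{\bbT,\bbT^{\bbla}}\, v.
\]
$\Phi$ is an $\cA(m)$-homomorphism because $W(\bbla)$ is cyclic on the image of $C^{\bbla}_{\bbT^{\bbla},\bbT^{\bbla}}$. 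For $\Phi$ to be well defined, I must check that $\cA^{\rhd\bbla}$ annihilates the generator of $S^{\bbla}$. This follows from the standard fact that $x_{\tabt}$ annihilates Specht modules of less dominant shape in type A, applied factorwise, together with the analogous statement for the $z^{\la}_{\nu}$ factor in $\cH_{(0,f_\la)}(\Sigma_2)$.

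Third, I show that $\Phi$ is surjective, since it maps a generator to a generator. I then compare dimensions. On the cell side, $\dim W(\bbla)=|M(\bbla)|$. On the Specht side, \cite{LNXII} supplies a basis, and I match $|M(\bbla)|$ with it using the three-component description of the triples $\bbT$:
\begin{itemize}
\item coset representatives $w_{\bbT}$ count the induction from a stabilizer;
\item tuples $\tabs_{\bbT}$ count standard tableaux for the $\Sigma_2$-part;
\item tuples $\tabt_{\bbT}$ count $\dim S^{\la}\otimes S^{\mu}$.
\end{itemize}
Equality of dimensions then forces $\Phi$ to be an isomorphism. To be safe over a field, I would do the whole argument integrally over the generic ring, where both modules are free, and then specialize.

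The main obstacle is identifying $S^{\bbla}$ with a cyclic module generated by exactly this element $v$. The difficulty is the quadratic relation $H_1^2=z_m$, in which $z_m$ has no square root. Because of it, the action of $H_1$ on $x_{\tabt^{\bbla}}z^{\la}_{\nu}$ may not visibly produce a symmetrizer. My first attempt would be to compute $H_1 v$ directly using the relations of \cref{def:QWP}, and to check that $v$ is an eigenvector up to a factor in $\cH_q(\Sigma_m)^{\otimes 2}$ acting by scalars on $S^{\la}\otimes S^{\la}$. For $\la=\mu$ this scalar should be $f_\la$, which is exactly the parameter of $\cH_{(0,f_\la)}(\Sigma_2)$. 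That would match the defining relations of the Specht module in \cite{LNXII}.
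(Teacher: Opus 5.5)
\textbf{There is a genuine gap: the key identification is left undone, and the map you do write down fails in the non-semisimple case.}

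\textbf{The decisive step is missing, and your setup is circular.} In this paper $S^\bbla$ is a concrete module: $S^\la\wr S^\nu_{(0,f_\la)}$ for $\bbla=e^\nu_\la$, and $\Ind_{B\otimes B}^{\cA(m)}(S^\la\otimes S^{\la'})$ for $\bbla=[\la,\la']$. If you ``realize'' $S^\bbla$ as $\cA(m)v$ modulo the ideal with $v=C^\bbla_{\bbT^\bbla,\bbT^\bbla}$, then by your own first step you have just rebuilt $W(\bbla)$. So the whole theorem rests on the identification you postpone as the ``main obstacle.''

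That identification is the $H_1$-computation you anticipate, and it is short. Put $C_0:=C^\bbla_{\bbT^\bbla,\bbT^\bbla}=(x_\la\otimes x_\la)(1\pm\sqrt{f_\la}^{-1}H_1)$. Since $H_1$ commutes with $x_\la\otimes x_\la$, and $z_m(x_\la\otimes x_\la)=f_\la(x_\la\otimes x_\la)$ by \cref{cor:zmaction}, one gets $H_1C_0=\pm\sqrt{f_\la}\,C_0$. More generally, \eqref{eq:H1a} and \eqref{eq:H1b} give the full $H_1$-action on $W(\bbla)$.

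Note also that $H_1$ does not generate a copy of $\cH_{(0,f_\la)}(\Sigma_2)$ inside $\cA(m)$, because $H_1^2=z_m$ is not a scalar. The symmetrizer only behaves like one after multiplying by $x_\la\otimes x_\la$, and for $[\la,\la']$ there is no symmetrizer at all.

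The paper's proof consists of exactly this. It writes down the linear bijection \eqref{eq:celliso} between the evident bases. It then checks that $b_1\otimes b_2$ (via $r_{b_1\otimes b_2}=r_{b_1}r_{b_2}$) and $H_1$ act by the same matrices on both sides. No cyclic generator or dimension count is needed.

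\textbf{Your map $\Phi$ breaks down in the non-semisimple case, and working over a generic ring does not rescue it.} The map $\Phi(c+\cA_{<\bbla})=cv$ is well defined and $\cA(m)$-linear. However, it sends the generator $C_0$ to $C_0v$, not to $v$.

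Recall that $C^\la_{x,y}$ acts on $S^\la$ by $C_z\mapsto\phi_\la(C_y,C_z)C_x$. Hence for $v=C^\la_{\tabs_R^\la}\otimes C^\la_{\tabs_R^\la}\otimes 1$ one finds $C_0v=2P_\la(q)^2v$. Here $x_\la^2=P_\la(q)x_\la$ with $P_\la(q)=\sum_{w\in\Sigma_\la}q^{\ell(w)}$ (compare \cref{prop:phibbla1}).

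A concrete failure: take $m=3$, $\la=(3)$, $K=\mathbb{C}$, and $q$ a primitive cube root of unity. This is allowed since $f_6(q)\neq0$, yet $P_{(3)}(q)=(1+q)(1+q+q^2)=0$.
\begin{enumerate}[(i)]
\item Since $S^{(3)}$ is one-dimensional, every $C^{(3)}_{x,y}$ acts by $0$ on it.
\item Hence every $C^\bbla_{\bbS,\bbT'}$ acts by $0$ on $S^\bbla$ for $\bbla=e^\nu_{(3)}$.
\item Therefore $\Phi=0$ for every choice of $v$ and $\bbT'$, although $W(\bbla)\neq0$.
\end{enumerate}
Over a generic ring, $\Phi$ is $2P_\la(q)^2$ times an isomorphism, and $2P_\la(q)^2$ is not a unit. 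So $\Phi$ is not surjective there either, and it specializes to $0$.

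If you instead meant $aC_0\mapsto av$, well-definedness requires $\{a : aC_0\in\cA_{<\bbla}\}\subseteq\Ann(v)$. This is not implied by $\cA_{<\bbla}v=0$, and checking it again amounts to comparing the actions.

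\textbf{A version of your strategy that does work:}
\begin{enumerate}[(i)]
\item For $[\la,\la']$, consider the $B\otimes B$-map $S^\la\otimes S^{\la'}\to W(\bbla)$, $C_{x_1}\otimes C_{x_2}\mapsto C_{(e,\tabs,(x_1,x_2))}$. By Frobenius reciprocity it induces a map from the induced module. This map is surjective because $H_1C_{(e,\tabs,\tabt)}=C_{(s_1,\tabs,\tabt)}$, hence bijective by dimensions.
\item For $e^\nu_\la$, the eigenvalue relation above identifies the kernels of the two surjections from $\Ind_{B\otimes B}^{\cA(m)}(S^\la\otimes S^\la)$ onto $W(\bbla)$ and onto $S^\bbla$.
\end{enumerate}
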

Furthermore, we compute the canonical bilinear form $\phi_\bbla$ for all $\bbla \in \Omega$ and then obtain representation theoretic consequences in \cref{thm:cellinfo}. 
\subsection{}
The paper is organized as follows. 
In Section 2, we review the necessary background on cellular algebras, the Hu algebra, and its generalization via the  quantum wreath products.

In Section 3, we construct the combinatorial data required for our main results for an arbitrary $d \ge 2$. This includes the poset consisting of poset-indexed multipartitions and the associated multitableaux sets. 
We then explicitly construct our new cellular basis elements, and verify that this basis successfully recovers established cellular bases in the extremal cases $d=1$ and $m=1$.

Section 4 is devoted to the proof of Theorem A. We provide explicit formulas for the basis elements and rigorously verify the cellular algebra axioms. We also briefly discuss how the resulting cellular datum naturally yields a generalized Robinson-Schensted correspondence.

In Section 5, we analyze the structure of the resulting cell modules to prove Theorem B. We demonstrate that these cell modules are isomorphic to the Specht modules constructed by Hu (using the language of wreath modules), and we explicitly compute their canonical bilinear forms.

Finally, Section 6 explores the broader implications of our work. We detail how our cellularity framework yields a direct realization of the simple modules for the Hecke algebra of type $D_{2m}$, and we discuss anticipated connection with rational Cherednik algebras.
\section{Background}\label{sec:backgr}
\subsection{Cellular algebras}
In this section, we recall the notion of cellular algebras \cite{GL96} over a commutative ring $K$.
\begin{defn}
    A {\em cellular algebra} is an associative $K$-algebra $A$ with cell datum $(\Omega,M,C,*)$ where
    \begin{itemize}
        \item[C1)] $\Omega$ is a partially ordered set and for each $\la\in \Omega$, $M(\la)$ is a finite set such that the map
        \begin{equation*}
            C: \bigsqcup\nolimits_{\la\in\Omega}M(\la)^2 \rightarrow A
        \end{equation*}
        is injective. Moreover,  $\{C^\la_{\tabt, \tabt'}:=C(\tabt, \tabt') ~|~ (\tabt, \tabt')\in M(\la)^2, \ \la\in\Omega\}$ forms a $K$-basis of $A$.
        \item[C2)] $*$ is an anti-involution of $A$ such that $(C^{\la}_{\tabt,\tabt'})^*=C^{\la}_{\tabt',\tabt}$ for all $\tabt, \tabt' \in M(\la)$.
        \item[C3)] For any element $a\in A$, there exists a map $r_a:\bigsqcup_{\la\in\Omega} M(\la)^2 \to K$ such that
        \begin{equation*}
            a C^{\la}_{\tabt,\tabt'} \equiv \sum_{\tabs\in M(\la)} r_a(\tabs,\tabt)C^{\la}_{\tabs,\tabt'} 
            \pmod{A_{<\la}},
            \quad
            \textup{for all}
            \quad\tabt, \tabt'\in M(\la),
        \end{equation*}
        where $A_{<\la}$ denotes the $K$-submodule of $A$ generated by $\{C^{\mu}_{\tabt, \tabt'}\mid \mu<\la,\ \tabt, \tabt'\in M(\mu)\}$.
    \end{itemize}
\end{defn}
For each $\la\in \Omega$, there exists a {\em cell module} $W(\la)$ which is the (left) $A$-module such that it is free as an $K$-module with basis $\{C^\la_{\tabt}\mid \tabt\in M(\la)\}$ and $A$-action given by, for $a \in A$:
\begin{equation}
    aC_\tabt^\la=\sum\nolimits_{\tabs\in M(\la)}r_a(\tabs,\tabt)C_{\tabt}^\la.
\end{equation}
Let  $\phi_\lambda: W(\lambda) \times W(\lambda) \to K$  be the {\em canonical bilinear form} constructed via
\begin{equation}    
\label{eq:phi_lambda}
C^\lambda_{\tabs_1,\tabs_2} C^\lambda_{\tabt_1,\tabt_2} \equiv \phi_\lambda(C_{\tabs_2}, C_{\tabt_1}) C^\lambda_{\tabs_1,\tabt_2} \mod A_{<\lambda}.
\end{equation}
Let $\rad(\la):= \{x \in W(\la)~|~ \phi_\la(x,y)=0 \textup{ for all }y \in W(\la)\}$.
The above cellular data provides an insightful information to the representation theory of $A$ as follows.
\begin{prop}[{\cite[Theorem 3.4, Theorem 3.8, Remark 3.10]{GL96}}] \label{prop:philainfo}
Suppose that $K$ is a field and that $A$ is a cellular algebra with cell datum $(\Omega,M,C,*)$. Then,
\begin{enua}
    \item Let $\Omega_0 := \{\la \in \Omega ~|~ \phi_\la \neq 0\}$.
    The set $\{ W(\la)/\rad(\la)~|~ \la \in \Omega_0\}$ forms a complete set of  absolutely irreducible $A$-modules, up to isomorphisms.
    \item Semisimplicity of $A$ is equivalent to that $\rad(\la) = 0$ for all $\la \in \Omega$.
    \item If $\phi_\la \neq 0$ for all $\la \in \Omega$, then $A$ is quasi-hereditary.
\end{enua}
\end{prop}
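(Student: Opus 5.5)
This result is quoted from Graham and Lehrer \cite{GL96}. The plan is to prove it directly from the cell datum $(\Omega,M,C,*)$. The two main tools are the cell ideals $A_{\le\la}$ and the bilinear form $\phi_\la$ of \eqref{eq:phi_lambda}.

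\textbf{Preliminaries on $\phi_\la$.} First, applying C3 on the left and the image of C3 under $*$ on the right shows that $\phi_\la$ is well defined, i.e.\ independent of the outer indices $\tabs_1,\tabt_2$. Second, comparing $*$ applied to \eqref{eq:phi_lambda} with C2 gives that $\phi_\la$ is symmetric. Third, it is invariant: $\phi_\la(a^*x,y)=\phi_\la(x,ay)$ for all $a\in A$. Invariance implies that $\rad(\la)$ is a submodule of $W(\la)$.

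\textbf{Part (a).} Suppose $\phi_\la\neq0$. Pick $x,y\in W(\la)$ with $\phi_\la(x,y)\ne0$. The identity $C^\la_{\tabs,\tabt'}\,y\,=\,\phi_\la(C_{\tabt'},y)\,C_\tabs$ (in module form) shows that every vector not in $\rad(\la)$ generates all of $W(\la)$. Hence $W(\la)/\rad(\la)$ is simple. For absolute irreducibility, the same argument applies verbatim after any field extension, so one checks $\End_A(W(\la)/\rad(\la))=K$.

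Distinctness of these simples is shown via the ordering. If $\theta:L_\la\to W(\mu)/\rad(\mu)$ is nonzero, then some $C^\la_{\tabs,\tabt}$ acts nonzero on $W(\mu)$. This forces $\la\ge\mu$, since $A_{\le\la}$ kills $W(\mu)$ unless $\mu\le\la$. By symmetry $\mu\ge\la$, so $\la=\mu$.

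Completeness is the step I expect to be the main obstacle. Take a simple module $L$ and choose $\la$ minimal with $A_{\le\la}L\ne0$. Then $L$ is a quotient of $A_{\le\la}/A_{<\la}\otimes L$. This quotient is a direct sum of copies of $W(\la)$, via the bimodule isomorphism $A_{\le\la}/A_{<\la}\cong W(\la)\otimes W(\la)^*$. Moreover, if $\phi_\la=0$ then $(A_{\le\la})^2\subseteq A_{<\la}$, which contradicts $A_{\le\la}L=L$. So $\phi_\la\ne0$, and $L$ is a quotient of $W(\la)$ whose unique maximal submodule is $\rad(\la)$.

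\textbf{Part (b).} If every $\rad(\la)=0$, then all cell modules are simple. A dimension count, $\dim A=\sum_\la |M(\la)|^2=\sum_\la(\dim L_\la)^2$, together with Wedderburn theory shows that $A$ is semisimple. Conversely, if $A$ is semisimple, then $\rad(\la)$ is a submodule with $W(\la)/\rad(\la)$ simple. Using the decomposition matrix factorisation $C=D^tD$ of the Cartan matrix, semisimplicity forces $D$ to be a permutation matrix, so $\rad(\la)=0$.

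\textbf{Part (c).} If all $\phi_\la\neq0$, then refine the order on $\Omega$ to a total order. The resulting chain of ideals $A_{\le\la}/A_{<\la}$ is made of idempotent ideals, since $\phi_\la\ne0$ gives an element with nonzero square. Each such ideal is projective of the form $Ae\otimes eA$, so it is a heredity ideal, and $A$ is quasi-hereditary.
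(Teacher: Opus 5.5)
Your proposal is correct and reconstructs the Graham--Lehrer arguments that the paper only cites: well-definedness, symmetry and invariance of $\phi_\la$; $\rad(\la)$ as the unique maximal submodule; a minimal cell ideal not annihilating $L$ for completeness; $C=D^tD$ for semisimplicity; and the chain of cell ideals as a heredity chain when every $\phi_\la\neq 0$. The one step to tighten is the converse in (b), where ``$W(\la)/\rad(\la)$ is simple'' presupposes $\phi_\la\neq 0$; your $D^tD=I$ argument does supply this, because each row of $D$ is nonzero (as $W(\la)\neq 0$), which forces $D$ to be square and hence $\Omega_0=\Omega$.
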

\subsection{Tableaux Combinatorics}
By $\mu \vDash m$ we mean $\mu$ is a \emph{composition} of $m$.
That is, a sequence $\mu = (\mu_1, \mu_2, \dots)$ of positive integers with $\sum_i \mu_i = m$.  
Denote by $\Lambda_m$ the set of compositions of $m$.
Let $\Sigma_m$ be the symmetric group on $m$ letters, and let $s_i = (i~i+1)$ be the simple transpositions.
Any composition $\mu \vDash m$ has a corresponding Young subgroup $\Sigma_\mu$ generated by those $s_i$ such that $i \not\in \{\mu_1, \mu_1+\mu_2, \dots\}$.

By $\lambda \vdash m$ we mean $\la$ is \emph{partition} of $m$, that is, a composition with $\lambda_1 \ge \lambda_2 \ge \cdots$.
Let $\Pi_m$ be the set of partitions of $m$.
We identify $\lambda$ with its Young diagram $[\lambda] := \{(i,j) \in \mathbb{Z}_{>0}^2 ~|~ 1\leq j \leq \lambda_i\}$.
The English notation for $[\la]$ is used in this paper, e.g.
\[
\la = (5,3) \vdash 8,
\quad
[\la] = \iyng{5,3}.
\]
A \emph{tableau} of shape $\lambda \vdash m$ is a filling of the diagram $[\lambda]$ with entries in $\mathbb{Z}_{>0}$.
Let $\la^t$ be the transposed partition of $\la$. 
A tableau $\tabt$ is called \emph{standard} if it contains the numbers $1,\dots,m$ each exactly once, with rows strictly increasing from left to right and columns strictly increasing from top to bottom.
Denote by $\std(\lambda)$ the set of standard tableaux of shape $\lambda$.

The set $\std(\la)$ affords a right $\Sigma_m$-action by permuting the entries of a tableau. 
Let $\tabs_R^\la \in \std(\la)$ 
be the initial tableau where the numbers $1, 2, \dots, d$ are filled in increasing order along the rows, and let $\tabs_C^\la := (\tabs_R^{\la^t})^t$ be the other initial tableau where the numbers $1, 2, \dots, d$ are filled in increasing order along the columns.
Let $d_\tabt, w(\la) \in \Sigma_m$ be the shortest element such that $\tabs_R^\la \cdot d_\tabt = \tabt$ and $\tabs_R^\la \cdot w(\la) = \tabs_C^\la \cdot$.
\begin{eg}
    Let $\la= (2,1) \vdash 3$. Then $\std(\la) =\{ \tabs_C^\la = \iyoung{12,3}, \iyoung{13,2}\}$.
    
    For $\tabt = \iyoung{13,2}$, one has $\tabs_R^\la \cdot (2 3) = \tabt = \tabs_C^\la$ and hence $d_\tabt = s_2 = w(\la)$.
\end{eg}
\subsection{Hecke algebras}
Fix a field $K$ of characteristic $\ch K \neq 2$.
For elements $a, b \in K$, denote by the {\it generic Hecke algebra} for  the symmetric groups $\Sigma_m$ on $m$ letters by the $K$-algebra $\cH_{(a,b)}(\Sigma_m)$ generated by
$T_1, \dots, T_{m-1}$ subject to the following relations:
\begin{align*}
&T_i T_j T_i= T_j T_i T_j,
\quad T_i T_l = T_l T_i,
&\textup{if }|i-j| = 1,\ |i-l| > 1,
\\
&T_i^2 = aT_i +b, 
&\textup{if }1\leq i \leq m-1.
\end{align*}
For any invertible $\phi \in K$, the generic Hecke algebra $\cH_{(0,\phi^2)}(\Sigma_m)$ is isomorphic to the group algebra $K[\Sigma_m]$ by the assignment $T_i \mapsto \phi s_i$ where $s_i := (i ~ i+1)$.
Let $\cH_q(\Sigma_m) = \cH_{(q-1,q)}(\Sigma_m)$ for any invertible $q \in K$.
The algebras $\cH_{(0,\phi^2)}(\Sigma_m)$ and $\cH_q(\Sigma_m)$ are both cellular with respect to the following cell data:  
\[
\Omega = \Pi_m,
\quad
M = \std(\la),
\quad
C^\la_{\tabt, \tabt'} = m^\la_{\tabt, \tabt'},
\quad
*:T_i \mapsto T_i,
\]
where $m^\la_{\tabt, \tabt'} := T_{d_\tabt}^* x_\la T_{d_{\tabt'}}$ is the Murphy basis and 
\begin{equation}
x_\la := \begin{cases}
    \sum_{w \in \Sigma_\la} T_w, &\textup{if } (a,b) = (q-1,q);
    \\
    \sum_{w \in \Sigma_\la} \phi^{-\ell(w)}T_w, &\textup{if } (a,b) = (0,\phi^2),
\end{cases}
\end{equation}
where $\ell$ is the length function.
The cell module $W(\la)$ is known \cite{GL96} to be isomorphic to the Specht module $S^\la$ for the Hecke algebra.
Moreover, the Robinson--Schensted correspondence 
$\Sigma_m \to \bigcup_{\la\in \Pi_m} \std(\la)^2$, $w \mapsto (\tabp(w),\tabq(w))$
has a tight connection with the cellular bases as $\cH_q(\Sigma_m)$ has another (geometric) cellular datum in which the cellular basis coincide with the Kazhdan--Lusztig basis $\{\underline{H}_w\}_w$. To be precise, $C^\la_{\tabp(w), \tabq(w)} := \underline{H}_w$ (see \cite[Theorem 5.6.2]{W03}).
\subsection{Hu algebras}\label{subs:hu}

From now on, we assume that $K$ is a field with $\ch K \neq 2$ and that $q \in K^\times$.
We will use the identification $\calh_q(\Sigma_m \times \Sigma_m)
\cong \calh_q(\Sigma_m) \otimes \calh_q(\Sigma_m) $ via the following canonical isomorphism:
\begin{equation*}
    T_i\mapsto\begin{cases}
        T_i\otimes 1\hspace{1.1cm}\text{if }1\le i \le m-1,\\
        1\otimes T_{i-m}\hspace{.65cm}\text{if }m+1\le i\le2m-1.
    \end{cases}
\end{equation*}
\begin{defn}
    The Hu algebra is the subalgebra 
    \[
    \cala(m) := \langle
    \calh_q(\Sigma_m)^{\otimes 2}, h_m^*
    \rangle \subseteq \calh_q(\Sigma_{2m})
    \]    
    where $h_m$  is the unique element in $\calh_q(\Sigma_{2m})$ such that the following holds as elements in the Hecke algebra of type B of unequal parameters $(1,q)$:
    \begin{equation}
    u_m^- h_m u_m^+ \in h_m u_m^+ + \sum_{j>m}     \calh_q(\Sigma_{2m}) u_j^+ \calh_q(\Sigma_{2m}).
    \end{equation}
    Here, $u_1^{\pm1} := 1$ and $u_{i+1}^\pm := u_i^{\pm1} (1\pm T_i \dots T_1 T_0 T_1 \dots T_i)$.
\end{defn}
A precise formula for $h_m$ was obtained in \cite{LNX}, but we won't need it in this paper. Instead, we collect some favorable properties of $h_m$ as follows.




\begin{prop}[{\cite[(4.12)]{dj92}, \cite[Lemmas 1.8, 1.10, 3.2, Theorem 3.5]{hu02}}]
Suppose that $f_{2m}(q):=	\prod_{i=0}^{2m-1}(1+q^i) \neq 0$. Then
    \begin{enua}
        \item The elements $h_m^*$ and $z_m:= (h_m^*)^2 \in \cH_q(\Sigma_{2m})$ are invertible in $\cH_q(\Sigma_m)^{\otimes 2}$. 
        \item The element $h_m^*$ intertwines $\cH_q(\Sigma_{m})\otimes \cH_q(\Sigma_{m})$, i.e., 
        \[
        h_m^*(T_x \otimes T_y) = (T_y \otimes T_x) h_m^*
        \quad
        \textup{for all}
        \quad
        x,y \in \Sigma_m.
        \]
        \item The element $z_m$ lies in the center $Z(\cH_q(\Sigma_{m})\otimes \cH_q(\Sigma_{m}))$, and it acts on the Specht module $S^{\la'}\otimes S^{\la''} \in \cH_q(\Sigma_m)\otimes \cH_q(\Sigma_m)$-mod by a scalar $f_{\la',\la''} \in K^\times$.
        \item There exists an element $\sqrt{f_\la} \in K$ such that $\sqrt{f_\la}^2 = f_\la := f_{\la,\la}$.
    \end{enua}
\end{prop}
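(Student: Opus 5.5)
The plan is to follow the cited sources, working inside the type $B$ Hecke algebra $\cH(B_{2m})$ with parameters $(1,q)$, where $h_m$ is defined. The order is (b), then (c) and the membership part of (a), then invertibility, and finally (d).

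\textbf{Part (b).} Let $\sigma$ be the involution of $\Sigma_m\times\Sigma_m$ swapping the two factors; on generators it sends $s_i\mapsto s_{i+m}$ for $i<m$ and back. The key input is the Dipper--James commutation rule \cite[(4.12)]{dj92}: for $x\in\Sigma_m\times\Sigma_m$, one has $u_m^{\mp} T_{w_{m,m}} T_x \equiv u_m^{\mp} T_{\sigma(x)} T_{w_{m,m}}$ modulo the ideal $\sum_{j>m}\cH_q(\Sigma_{2m})u_j^+\cH_q(\Sigma_{2m})$. Here $w_{m,m}$ is the distinguished double coset element interchanging the two blocks, and $u_m^\pm$ commutes with $\cH_q(\Sigma_m)^{\otimes 2}$ modulo that ideal.
\begin{itemize}
\item Multiply the defining congruence of $h_m$ on the right by $T_x$.
\item Use this commutation rule to move $T_x$ through $u_m^+$.
\item Conclude that $h_mT_x$ and $T_{\sigma(x)}h_m$ satisfy the same characterizing congruence.
\item By the uniqueness of $h_m$, they agree: $h_mT_x=T_{\sigma(x)}h_m$.
\item Apply the anti-involution $*$ (which fixes each $T_i$) to get the stated identity for $h_m^*$.
\end{itemize}

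\textbf{Part (c) and the first half of (a).} Applying (b) twice shows that $z_m=(h_m^*)^2$ commutes with every $T_x\otimes T_y$. The substantive claim is that $z_m$ actually lies in $\cH_q(\Sigma_m)^{\otimes 2}$. For this I would use Hu's computation \cite[Lemma 3.2]{hu02}. Squaring the defining relation and reducing $u_m^- h_m u_m^+ h_m u_m^+$ with the Jucys--Murphy elements $L_1,\dots,L_{2m}$ of type $B$ expresses $z_m$ as a product
\[
z_m=\prod_{i\le m<j}\bigl(L_i+L_j\bigr)\cdot(\text{unit}),
\]
up to normalization. This is a symmetric polynomial in the Jucys--Murphy elements of each tensor factor, hence central in $\cH_q(\Sigma_m)^{\otimes 2}$. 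Acting on the Murphy basis of $S^{\la'}\otimes S^{\la''}$, it is triangular with diagonal entries given by content evaluations $\prod_{i\le m<j}(q^{c_i}+q^{c_j})$. Since the element is central, its action must be the scalar
\[
f_{\la',\la''}=\prod_{(a,b)}\bigl(q^{c(a)}+q^{c(b)}\bigr),
\]
with $a$ ranging over the nodes of $\la'$ and $b$ over the nodes of $\la''$.

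\textbf{Invertibility.} Each factor $q^{c}+q^{c'}$ equals $q^{\min(c,c')}(1+q^{|c-c'|})$ with $|c-c'|\le 2m-1$. It is therefore nonzero when $f_{2m}(q)\ne0$. So $z_m$ acts invertibly on every composition factor of the regular module, and is a unit in the finite-dimensional algebra $\cH_q(\Sigma_m)^{\otimes 2}$. It then follows that $h_m^*$ is invertible, with inverse $h_m^*z_m^{-1}$.

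\textbf{Part (d).} When $\la'=\la''=\la$, the factors of $f_\la$ indexed by pairs of nodes $(a,b)$ and $(b,a)$ are equal. The diagonal pairs $a=b$ contribute $2q^{c(a)}$, and this is the only place a non-square can appear. This gives
\[
f_\la=2^m q^{\sum_a c(a)}\prod_{\{a,b\},\,a\ne b}\bigl(q^{c(a)}+q^{c(b)}\bigr)^2 .
\]
So a square root requires only $\sqrt{2}$ and $\sqrt{q}$, which Hu's hypotheses provide after possibly passing to an extension, i.e.\ adjoining a fixed square root.

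\textbf{Main obstacle.} I expect the hardest step to be the explicit reduction showing $z_m\in\cH_q(\Sigma_m)^{\otimes2}$, together with its Jucys--Murphy factorization. It requires careful bookkeeping of the ideal generated by $u_j^+$ for $j>m$, and I would import it directly from \cite{hu02}.
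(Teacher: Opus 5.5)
There is a genuine gap in your description of $z_m$, and both your invertibility argument and your proof of (d) rest on it. The scalar you derive, $f_{\la',\la''}=\prod_{(a,b)}(q^{c(a)}+q^{c(b)})$, is not the eigenvalue of $z_m$. For $m=1$ it gives $2$, whereas $z_1=(T_1+qT_1^{-1})^2=(1+q)^2$ by \eqref{eq:z1}. For $m=2$ it gives $f_{(2)}=4q(1+q)^2$ and $f_{(1,1),(2)}=2q^{-2}(1+q)^2(1+q^2)$, instead of $(1+q)^2(1+q^2)^2$ and $4(1+q)(1+q^3)$ as in \eqref{eq:z2}. The ratios depend on $(\la',\la'')$, so no rescaling of $h_m$ repairs this. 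Moreover, the factor $1+q^3=1+q^{2m-1}$ cannot arise from your product at all, since contents of two partitions of $m$ differ by at most $2m-2$. Your formula would therefore make the factor $1+q^{2m-1}$ of $f_{2m}(q)$ irrelevant. (The factorization $\prod_{i\le m<j}(L_i+L_j)$ also conflates type $B$ Jucys--Murphy elements, which involve $T_0$ and do not lie in $\cH_q(\Sigma_{2m})$, with the type $A$ content evaluations you then use.)

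The principle that a central element acting by a nonzero scalar on every Specht module is a unit is fine. However, the nonvanishing of the true scalars under $f_{2m}(q)\neq0$ has to be imported from the actual Dipper--James/Hu computation, and your sketch does not reproduce it. Part (d) then fails as stated: the proposition asserts $\sqrt{f_\la}\in K$ with no extension of scalars, while your argument needs $\sqrt2$ and $\sqrt q$. Already for $m=1$, your value $f_{(1)}=2$ has no square root in $K=\mathbb{Q}$. In fact $f_\la$ is a perfect square, e.g.\ $(1+q)^2$ or $\bigl((1+q)(1+q^2)\bigr)^2$. The missing idea is a reason why $f_\la$ is the square of a Laurent polynomial in $q$ before specialization. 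This could come from an explicit evaluation, or from a splitting argument. For the latter, $\mathrm{End}\bigl(\mathrm{Ind}_{B\otimes B}^{\cA(m)}(S^\la\otimes S^\la)\bigr)$ is spanned by $1$ and an operator $\theta$ built from $H_1$ and the flip, with $\theta^2=f_\la$. Splitness of the generic Hu algebra over $\mathbb{Q}(q)$ then forces $f_\la$ to be a square there.

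For (b), your argument is correct once two readings are made explicit. The characterizing relation should be $u_m^-T_{w_{m,m}}u_m^+\equiv h_mu_m^+$ modulo $I:=\sum_{j>m}\cH_q(\Sigma_{2m})u_j^+\cH_q(\Sigma_{2m})$. ``Uniqueness'' should mean injectivity of $h\mapsto hu_m^++I$ on $\cH_q(\Sigma_{2m})$. Then $h_mT_xu_m^+\equiv u_m^-T_{w_{m,m}}T_xu_m^+=T_{\sigma(x)}u_m^-T_{w_{m,m}}u_m^+\equiv T_{\sigma(x)}h_mu_m^+$, and injectivity gives the claim. You should state this explicitly. The congruence displayed in the text, read literally, is preserved by $h\mapsto hT_x$ and $h\mapsto T_yh$ for $x,y\in\Sigma_m\times\Sigma_m$. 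So it cannot single out $h_m$, and no twist by $\sigma$ can come out of it.
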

\begin{eg}
When $m=1$, it is understood that $\cH_q(\Sigma_1) = K$, $h_1 = T_1 + qT_1^{-1}$ and 
\begin{equation}   \label{eq:z1}
z_1 = (T_1 + qT_1^{-1})^2 = (T_1 - qT_1^{-1})^2 + 4q = (q-1)^2 + 4q = (q+1)^2 \in K.
\end{equation}
For $m=2$, it is not easy to describe the element $z_2$ in terms of $T_1$ and $T_3$.
That being said, one can compute explicitly the following scalars for the $z_2$-action on the Specht modules:
\begin{equation}   \label{eq:z2}
f_{(1,1)} = f_{(2)} = (1+q)^2 (1+q^2)^2,
\quad
f_{(1,1),(2)} = f_{(2),(1,1)} = 4(1+q) (1+q^3).
\end{equation}
We set $\sqrt{f_{(1,1)}} := (1+q)(1+q^2) =: \sqrt{f_{(2)}}$.
\end{eg}
\begin{cor}\label{cor:zmaction}
Suppose that $\la, \la' \vdash m$, $\tabs_1, \tabs_2 \in \std(\la)$ and $\tabt_1, \tabt_2\in \std(\la')$. Then,
$z_m (C_{\tabs_1, \tabs_2}^\la \otimes C_{\tabt_1, \tabt_2}^{\la'}) = f_{\la,\la'} (C_{\tabs_1, \tabs_2}^\la \otimes C_{\tabt_1, \tabt_2}^{\la'}).$
\end{cor}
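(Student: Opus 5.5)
The plan is to use the centrality of $z_m$ to reduce the claim to a statement about $x_\la\otimes x_{\la'}$, and then to identify the action of $z_m$ there through the Jucys--Murphy description of $z_m$ that underlies part (c) of the preceding proposition. Only step (iii) below is genuinely open, and I flag a concrete worry about it in the final paragraph.

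\textbf{Step (i): reduction to $x_\la\otimes x_{\la'}$.} Write the basis elements in the Murphy form
\[
C^\la_{\tabs_1,\tabs_2}\otimes C^{\la'}_{\tabt_1,\tabt_2}
=(T^*_{d_{\tabs_1}}\otimes T^*_{d_{\tabt_1}})\,(x_\la\otimes x_{\la'})\,(T_{d_{\tabs_2}}\otimes T_{d_{\tabt_2}}).
\]
By part (c) of the preceding proposition, $z_m$ is central in $\cH_q(\Sigma_m)\otimes\cH_q(\Sigma_m)$. Hence $z_m$ commutes past the left factor $T^*_{d_{\tabs_1}}\otimes T^*_{d_{\tabt_1}}$. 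It therefore suffices to prove the single identity
\begin{equation*}
z_m\,(x_\la\otimes x_{\la'}) = f_{\la,\la'}\,(x_\la\otimes x_{\la'}).
\end{equation*}
Multiplying this identity back by the two outer factors gives the corollary for all tableaux simultaneously.

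\textbf{Step (ii): the congruence.} Because $z_m$ is central, the left ideal $\cH x_\la\otimes\cH x_{\la'}$, which is a tensor product of permutation modules, is stable under $z_m$. Its top cell quotient is $S^\la\otimes S^{\la'}$, on which $z_m$ acts by $f_{\la,\la'}$. This yields the identity above modulo the span of Murphy elements of strictly more dominant shapes in each tensor factor.

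\textbf{Step (iii): exactness.} To remove the error term, I would use Hu's description of $z_m$ from \cite[Lemma 3.2, Theorem 3.5]{hu02}. There $z_m$ is a product of factors of the form $(1+q^{a}L_iL'_j)$-type expressions, symmetric in the Jucys--Murphy elements of each tensor factor. The first thing I would try is to show that, with these explicit factors, each symmetric expression acts on $x_\la$ exactly by its content evaluation. The evidence for this would be the explicit Dipper--James formulas for $L_k x_\la$ when $k$ is the last entry in its row of $\tabs^\la_R$ \cite[(4.12)]{dj92}. I would then propagate this through the rows using the symmetry of the product.

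\textbf{Main obstacle and a caution.} Step (iii) is the crux, and I expect it to be the main obstacle. I should flag a real doubt about whether it can hold. For $\la=(1^m)$ one has $x_\la=1$, so exactness would force $z_m$ to be a scalar on $1\otimes x_{\la'}$. The values in \eqref{eq:z2} suggest this fails, for example for $m=2$. So the exact equality can only be expected if the $C^\la$ are understood in a convention, such as the dual Murphy basis, where the relevant ideal is one-sided-isotypic. Otherwise the honest content is the congruence of Step (ii).
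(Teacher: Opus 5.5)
Your Steps (i) and (ii) are sound, and your closing caution is right. Step (iii) cannot be completed, because the statement is false as written for the Murphy basis $C^\la_{\tabs,\tabt}=T^*_{d_\tabs}x_\la T_{d_\tabt}$ that the paper uses.

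\textbf{A counterexample.} Take $m=2$ and $\la=\la'=(1,1)$. There is a single standard tableau $\tabs=\tabs^{(1,1)}_R$, with $d_\tabs=e$, and $x_{(1,1)}=1$. So the corollary asserts $z_2=f_{(1,1)}\cdot(1\otimes 1)$ in $\cH_q(\Sigma_2)^{\otimes 2}$. Then $z_2$ would act by $f_{(1,1)}$ on $S^{(1,1)}\otimes S^{(2)}$. Part (c) of the proposition preceding the corollary says it acts there by $f_{(1,1),(2)}$. By the paper's own values \eqref{eq:z2},
\[
f_{(1,1)}-f_{(1,1),(2)}=(1+q)^2(q-1)(q^3+q^2-q+3),
\]
which is nonzero unless $q=1$ or $q^3+q^2-q+3=0$; for $K=\mathbb{Q}$ and $q=2$ one gets $225\neq 108$.

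\textbf{Why no variant can work.} Your Step (i) shows the corollary is equivalent to $z_m(x_\la\otimes x_{\la'})=f_{\la,\la'}(x_\la\otimes x_{\la'})$. By centrality, this would force $z_m$ to act by one scalar on all of $Bx_\la\otimes Bx_{\la'}$. But that module has composition factors $S^\mu\otimes S^{\mu'}$ with $\mu\unrhd\la$, $\mu'\unrhd\la'$, which carry different scalars $f_{\mu,\mu'}$. The Jucys--Murphy route fails for the same reason: a symmetric polynomial in Jucys--Murphy elements is central, so it cannot act by a scalar on $x_\la$ unless it is constant on the composition factors of $M^\la$. Passing to the dual Murphy basis does not help either, since it also contains $1=y_{(1^m)}$.

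\textbf{The paper's proof has exactly this hole.} It computes the action of $z_m=\sum_i a_i\otimes a'_i$ on the cell module $W(\la)\otimes W(\la')$, where $z_m$ genuinely acts by $f_{\la,\la'}$. Stated correctly, this gives $\sum_i r_{a_i}(\tabu,\tabs_1)r_{a'_i}(\tabu',\tabt_1)=f_{\la,\la'}\delta_{\tabu,\tabs_1}\delta_{\tabu',\tabt_1}$; the paper's display has $C_{\tabs_1}\otimes C_{\tabt_1}$ in place of $C_\tabu\otimes C_{\tabu'}$. It then transports this to $C^\la_{\tabs_1,\tabs_2}\otimes C^{\la'}_{\tabt_1,\tabt_2}$ as an identity in $B\otimes B$, silently discarding the lower-cell terms that axiom C3 allows. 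What is actually proved, by the paper's argument and by your Step (ii) alike, is
\[
z_m\,(C^\la_{\tabs_1,\tabs_2}\otimes C^{\la'}_{\tabt_1,\tabt_2})\equiv f_{\la,\la'}\,(C^\la_{\tabs_1,\tabs_2}\otimes C^{\la'}_{\tabt_1,\tabt_2})\pmod{B_{<\la}\otimes B+B\otimes B_{<\la'}}.
\]

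\textbf{The correct error space.} It is the sum above. It is not ``more dominant in each tensor factor'', as your Step (ii) phrases it, and not $B_{<\la}\otimes B_{<\la'}$ either. Indeed, $(a\otimes a')(u\otimes v)$ with $au=\alpha u+e$ and $a'v=\beta v+e'$ contains the cross terms $\alpha\, u\otimes e'$ and $\beta\, e\otimes v$.

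\textbf{What to do.} State and prove the corollary in this congruence form and drop Step (iii). The later uses of the corollary are the $H_1$ case of C3 and \cref{prop:phibbla1,prop:phibbla2}. These then need a separate check that $B_{<\la}\otimes B+B\otimes B_{<\la'}$, together with its right multiples by $H_1$, lies in $\cA_{<\bbla}$.
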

\begin{proof}
    Let $a_i, a'_i \in \cH_a(\Sigma_m)$ be such that $z_m = \sum_i a_i \otimes a'_i$.
    Since 
    \[
    z_m (C_{\tabs_1}^\la \otimes C_{\tabt_1}^{\la'}) 
    = \sum_i 
    \sum_{\tabu\in\std(\la)}
    \sum_{\tabu'\in\std(\la')}
    r_{a_i}(\tabu, \tabs_1) 
    r_{a'_i}(\tabu', \tabt_1)
    (C_{\tabs_1}^\la  \otimes  C_{\tabt_1}^{\la'}  ),
    \]
one has  
$   \sum_i\sum_{\tabu\in\std(\la)}
    \sum_{\tabu'\in\std(\la')}
    r_{a_i}(\tabu, \tabs_1) 
    r_{a'_i}(\tabu', \tabt_1)
 = f_{\la,\la'}$, and hence
\[
\begin{split}
z_m (C_{\tabs_1, \tabs_2}^\la \otimes C_{\tabt_1, \tabt_2}^{\la'}) 
&=
    \sum_i
   \sum_{\tabu\in\std(\la)}
    \sum_{\tabu'\in\std(\la')}
    r_{a_i}(\tabu, \tabs_1) 
    r_{a'_i}(\tabu', \tabt_1)
    (C_{\tabs_1, \tabs_2}^\la \otimes C_{\tabt_1, \tabt_2}^{\la'}) 
\\
&= f_{\la,\la'} (C_{\tabs_1, \tabs_2}^\la \otimes C_{\tabt_1, \tabt_2}^{\la'}).
\qedhere
\end{split}
\]
\end{proof}
\subsection{Quantum wreath products}
Hu's original definition works for the case $d=2$.
An analog for $d >2$ can be constructed using the notion of the quantum wreath products.
Let $B$ be an associative $K$-algebra, and let $d \ge 2$ be an integer.
For $ 1\leq i \leq d-1$, write
\begin{align*}
    &X_i := 1^{\otimes i-1} \otimes X \otimes 1^{\otimes d-i-1},
    &
    (X \in B\otimes B),
\\
    &\psi_i(b_1\otimes \dots \otimes b_d) := \psi(b_i\otimes b_{i+1})_i,
    &
    (\psi \in \End_K(B\otimes B), b_j \in B).
\end{align*}
Let $Q=(R,S,\rho,\sigma)$ be a quadruple where $R,S\in B\otimes B$, $\rho,\sigma\in \End_K(B\otimes B)$.
Thus, $S_i$, $R_i$, $\sigma_i(b)$, and $\rho_i(b)$ are all elements in $B^{\otimes d}$, for $b\in B^{\otimes d}$ and $1\leq i \leq d-1$.
\begin{defn}\label{def:QWP}
The \emph{quantum wreath product} is the associative $K$-algebra generated by the algebra $B^{\otimes d}$ and elements $H_1,\dots,H_{d-1}$ such that for $1\le k\le d-2$, $1\le i\le d-1$, and $|j-i|\ge 2$ we have that :
\begin{align}
\text{(braid relations)} \qquad
&H_kH_{k+1}H_k = H_{k+1}H_kH_{k+1}, \qquad
H_iH_j = H_jH_i, \label{eq:braid} \\
\text{(quadratic relations)} \qquad
&H_i^2 = S_iH_i + R_i, \\
\text{(wreath relations)} \qquad
&H_ib = \sigma_i(b)H_i + \rho_i(b)
\qquad (b\in B^{\otimes d}). 
\end{align}
\end{defn}

We refer to this algebra as $B \wr_Q H(d)$, or $B \wr H(d)$ whenever it is convenient.

\begin{defn}
Consider the quantum wreath product $B \wr \calh(d)$ with the following choices:
\[
B = \calh_q(\Sigma_m), \qquad
S = 0, \qquad
R = z_m, \qquad
\sigma : a \otimes b \mapsto b \otimes a, \qquad
\rho = 0.
\]
We call these quantum wreath products the {\it generalized Hu algebra} since $\cala(m) \cong B \wr \cH(2)$ via $h_m^* \mapsto H_1, \cH_q(\Sigma_m\times \Sigma_m) \mapsto \cH_q(\Sigma_m)^{\otimes 2}$.
\end{defn}

\begin{prop}[{\cite[Corollary 7.3.2]{LNXII}}] \label{gHuast}
    The following is an anti-automorphism of the generalized Hu algebra $\cH_q(\Sigma_m) \wr \cH(d)$:
\[
\ast: B \wr \cH(d) \to B \wr \cH(d), \quad 
b_1 \otimes \dots \otimes b_d \mapsto b_1^{\ast} \otimes \dots \otimes b_d^{\ast}, \quad
H_i \mapsto H_i.
\]
\end{prop}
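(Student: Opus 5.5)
The plan is to use the universal property of the quantum wreath product. First I define $\ast$ on the free product of $B^{\otimes d}$ and the free algebra on $H_1,\dots,H_{d-1}$ as an anti-homomorphism. On $B^{\otimes d}$ it is the tensor product of the Murphy anti-involutions $T_i\mapsto T_i$ of $\cH_q(\Sigma_m)$; this is an anti-automorphism because $(b\otimes b')^\ast=b^\ast\otimes b'^\ast$ and the tensor factors commute. On the generators it is $H_i\mapsto H_i$. Since the map is an involution on generators, it is enough to show that it preserves the defining ideal. Then it descends to $B\wr\cH(d)$, and the descended map is bijective because it squares to the identity.

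I then check the relations one at a time, in the order given in \cref{def:QWP}.

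\textbf{Braid relations.} Reversing a word in the $H$'s sends $H_kH_{k+1}H_k=H_{k+1}H_kH_{k+1}$ to itself, and likewise $H_iH_j=H_jH_i$, so these are preserved automatically.

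\textbf{Quadratic relation.} Here $S=0$, so applying $\ast$ to $H_i^2=(z_m)_i$ gives $H_i^2=(z_m^\ast)_i$. I therefore need $z_m^\ast=z_m$. I will argue this from centrality as follows. By the earlier proposition, $z_m$ is central in $\cH_q(\Sigma_m)^{\otimes2}$ and acts on each $S^{\la}\otimes S^{\la'}$ by the scalar $f_{\la,\la'}$. Over the generic ring, or whenever $\cH_q(\Sigma_m)^{\otimes 2}$ is semisimple, the center is spanned by central idempotents, and these are $\ast$-fixed because $\ast$ preserves each cell ideal. The alternative is to verify $h_m^\ast$-symmetry directly: $z_m=(h_m^\ast)^2$, so $z_m^\ast=(h_m^{2})$ computed in $\cH_q(\Sigma_{2m})$ under its anti-involution $T_i\mapsto T_i$, and one would like $h_m^2=(h_m^\ast)^2$. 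To avoid relying on semisimplicity, I would instead use \cref{cor:zmaction}. It gives $z_m C^{\la}_{\tabs_1,\tabs_2}\otimes C^{\la'}_{\tabt_1,\tabt_2}=f_{\la,\la'}C\otimes C$ for the whole Murphy basis. Applying $\ast$ turns this into a right action: $(C\otimes C)z_m^\ast=f_{\la,\la'}(C\otimes C)$. Centrality of $z_m$ gives $(C\otimes C)z_m=f_{\la,\la'}(C\otimes C)$ as well. Since the Murphy basis spans the whole algebra, right multiplication by $z_m^\ast-z_m$ kills $1$, so $z_m^\ast=z_m$.

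\textbf{Wreath relation.} Here $\rho=0$, and the relation reads $H_ib=\sigma_i(b)H_i$ for $b\in B^{\otimes d}$. Applying $\ast$ gives $b^\ast H_i=H_i\,\sigma_i(b)^\ast$. Since $\sigma$ is the flip, it commutes with the factorwise $\ast$, so $\sigma_i(b)^\ast=\sigma_i(b^\ast)$. The relation for $b'=\sigma_i(b^\ast)$ gives $H_i\sigma_i(b^\ast)=\sigma_i^2(b^\ast)H_i=b^\ast H_i$, which is exactly what is needed.

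Every relation is therefore preserved, and $\ast$ is a well-defined anti-automorphism, indeed an involution. I expect the main obstacle to be the step $z_m^\ast=z_m$. The argument via \cref{cor:zmaction} depends on the Murphy basis spanning and on the corollary holding over $K$, which needs $f_{2m}(q)\neq0$. I would state that hypothesis explicitly, or note that $z_m$ is defined through $h_m$ under the same assumptions.
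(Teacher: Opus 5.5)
The reduction is sound. Defining $\ast$ on the free product, descending it, and getting bijectivity from $\ast^2=\id$ are all fine, as are your checks of the braid and wreath relations. Since $S=\rho=0$, everything comes down to the single identity $z_m^\ast=z_m$ in $\cH_q(\Sigma_m)^{\otimes 2}$. The paper gives no argument for this; it only cites \cite{LNXII}.

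\textbf{The gap.} Your proof of $z_m^\ast=z_m$ uses \cref{cor:zmaction} as an exact equality in $\cH_q(\Sigma_m)^{\otimes 2}$, and as an exact equality it is false. The set $\std(1^m)$ contains only the tableau $\tabs_R^{(1^m)}$, and the Young subgroup $\Sigma_{(1^m)}$ is trivial. Hence $C^{(1^m)}_{\tabs_R^{(1^m)},\tabs_R^{(1^m)}}=x_{(1^m)}=1$. The exact statement would then give $z_m=z_m(1\otimes 1)=f_{(1^m),(1^m)}$, a scalar, which forces all the $f_{\la,\la'}$ to coincide. But for $m=2$ the paper's own values are $f_{(2),(1,1)}=4(1+q)(1+q^3)$ and $f_{(1,1)}=(1+q)^2(1+q^2)^2$, which differ for generic $q$. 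What the reasoning behind \cref{cor:zmaction} actually yields is only a congruence modulo lower cell ideals, because the coefficients $r_a$ describe the action only modulo $B_{<\la}$.

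\textbf{What the congruence gives.} Feeding only the congruence into your argument shows that $(C\otimes C)(z_m^\ast-z_m)$ lies in the lower cell ideal for every basis element. That is, the central element $z_m^\ast-z_m$ acts as zero on every cell module. When $\cH_q(\Sigma_m)$ is semisimple this forces $z_m^\ast=z_m$. In general it only shows that $z_m^\ast-z_m$ is nilpotent. The hypothesis $f_{2m}(q)\neq 0$ does allow non-semisimple $\cH_q(\Sigma_m)$, for example $m=3$ with $q$ a primitive cube root of unity in characteristic $0$. So you need an additional input.

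\textbf{How to close it.} The route you sketched and then abandoned is the right one, once completed by specialization:
\begin{enumerate}[label=(\roman*)]
\item Over $\mathbb{Q}(q)$ the algebra is split semisimple. There $z_m^\ast$ is central and acts on each $\ast$-self-dual simple module $S^\la\otimes S^{\la'}$ by the same scalar $f_{\la,\la'}$ as $z_m$, so $z_m^\ast=z_m$.
\item Provided $z_m$ comes from an element defined over $\mathbb{Z}[q,q^{-1}]$ compatibly with base change (this must be checked from the construction of $h_m$), and since $\cH_{\mathbb{Z}[q,q^{-1}]}(\Sigma_m)^{\otimes 2}$ embeds in the generic algebra, the identity specializes to $K$.
\end{enumerate}
Alternatively, combine $z_m\in Z(B\otimes B)=Z(B)\otimes Z(B)$ with a description of the centre of $\cH_q(\Sigma_m)$ showing that every central element is $\ast$-fixed. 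One example would be symmetric polynomials in the commuting, $\ast$-invariant Jucys--Murphy elements.
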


\section{Construction of Cell Data}\label{sec:celldata}
\subsection{The Index Poset $\Omega$}\label{sec:Omega}
For the rest of the paper, we set $B=\calh_q(\Sigma_m)$.
Let $(\Pi_{m}^d, \le)$ be the poset of multipartitions of $(m,m, \dots, m)$ of $d$ components with the usual dominance order, and let $\Pi = \bigcup_{n\geq 0} \Pi_n$ be the set of all (possibly empty) partitions.
\begin{defn}\label{defn:lambda}
For any set $I$, let $\Pi_I(d)$ be the $I$-indexed multipartitions of $d$, i.e.
\[
\Pi_I(d) = \left\{
\bbla: I \to \Pi 
~\middle|~
\sum\nolimits_{\la\in I} \#\bbla(\la) = d
\right\}.
\]
An arbitrary $\bbla \in \Pi_I(d)$ can be written as a sum of characteristic functions of the form $e_\la^\nu: I \to \Pi$ that is supported on $\{\la\}\subseteq I$ with its value given by $e_\la^\nu(\la) = \nu \in \Pi$. 
Let $\Omega := \Pi_{\Pi_m}(2)$, and write $[\la', \la''] := e_{\la'}^{\iyng{1}}+e_{\la''}^{\iyng{1}}$ for short. 
Observe that $[\la', \la''] = [\la'', \la']$.
\end{defn}
\begin{defn}[{Dominance Order \cite[Definition 6.1.1]{LNXII}}]
\label{def:domPO}
Suppose that $(I,\leq)$ is a poset.
Define the dominance order on $\Pi_{\Pi_m}(d)$ by $\bblambda \leq \bblambda'$ if and only if 
\[
\sum_{j \leq k}\bblambda(\nu)_j
+
\sum_{\gamma < \nu \in I} \#\bblambda(\gamma)  
\ \leq \ 
\sum_{j \leq k}\bblambda'(\nu)_j
+
\sum_{\gamma < \nu \in I} \#\bblambda'(\gamma)
\qquad
(\textup{for all}
~ \nu\in I, 
~k\geq 1).
\]
\end{defn}
\begin{eg}
Suppose that $m=d=2$. Take the dominance order on $I = \Pi_2$ so $(1,1) = \iyng{1,1} < (2) = \iyng{2}$.
The dominance on $\Omega$ is given by
\[
 e_{(1,1)}^{(1,1)} < e_{(1,1)}^{(2)} < [(1,1), (2)] < e_{(2)}^{(1,1)} < e_{(2)}^{(2)}.
\]    
\end{eg}
Fix an enumeration\footnote{
The standard convention is $\{\pi^{(1)}, \pi^{(2)}, \dots \}$. In this paper, we emphasize that the superscripts for partitions serve purely as indexing decorations and do not denote powers or multiplicities. This deliberate departure from standard notation is to avoid ambiguity, as we frequently use superscripts of the form $(i)$ to denote the one-row partitions consisting of $i$ box(es).} $\{\pi^{1}, \pi^2, \dots \}$ of $\Pi_m$ that is compatible with the partial order $\leq$, i.e.,  $\pi^{i}<\pi^{j}$ in $\Pi_m$ implies $i<j \in \mathbb{N}$.
Given $\bblambda:I_B\to \Pi$ with support supp$(\bblambda) = \{\pi^{k_1}, \dots, \pi^{k_r}\}$ where $1\leq k_1 < \dots < k_r \leq s$,
define a composition $\mu = \mu(\bblambda) \vDash d$ by $\mu_i := \#\bblambda(\pi^{k_i})$, and a multipartition
\begin{equation}    
\overline{\bblambda} := (
\underset{\mu_1\textup{ times}}{\underbrace{\pi^{k_1}, \dots, \pi^{k_1}}}, 
\underset{\mu_2\textup{ times}}{\underbrace{\pi^{k_2}, \dots, \pi^{k_2}}}, 
\dots, 
\underset{\mu_r\textup{ times}}{\underbrace{\pi^{k_r}, \dots, \pi^{k_r}}} 
) \in \Pi_m^d.
\end{equation}
Equivalently, $\overline{\bblambda}$ is a map $\{1,\dots,d\} \to \Pi$ such that $\overline{\bblambda}(i) := \pi^{k_j}$ where $k_1 + \dots + k_{j-1} < i \leq k_1 + \dots + k_j$.
\begin{eg}\label{eg:d=2}
\begin{enua}
\item 
Suppose that $m=3$, $d=4$, $\Pi_3 = \{(1,1,1) < (2,1) < (3)\}$. 
Let $\bbla = e_{(3)}^{(1)}+e_{(1,1,1)}^{(2,1)}$. Then,
$\overline{\bbla} = ((1,1,1), (1,1,1), (1,1,1), (3)) \in \Pi_3^4$.
\item Suppose that $d=2$. Elements $\bbla \in \Omega$ are of the following form:
$e_\la^{(2)}$, $e_\la^{(1,1)}$, and $[\pi^{i}, \pi^{j}]$ for $i< j$.
The corresponding $\overline{\bbla}$ are, respectively, $(\la, \la)$, $(\la, \la)$, and $(\pi^{i}, \pi^{j})$.
\end{enua}
\end{eg}

\subsection{The Tableaux Sets $M(\bbla)$}

Let $\mu \vDash d$ be a composition.
Denote the corresponding set of shortest representatives for the right cosets $\Sigma_\mu \backslash \Sigma_d$ by
\[
{}^\mu\Sigma := \{w \in \Sigma_d ~|~  g w > w \textup{ for all } g\in \Sigma_\mu\}.
\]
\begin{defn}\label{defn:m-set}
For $\bbla\in \Omega$, define
\[
\begin{split}
M(\bbla) &:= 
{}^{\mu(\overline{\bbla})}\Sigma
\times
\prod\nolimits_{j \geq 1}\std(\bbla(\pi^{j}))
\times
\prod\nolimits_{i=1}^d
\std(\overline{\bbla}(i))
\\
&\cong 
\std(\bbla(\pi^{1}),\bbla(\pi^{2}), \dots)
\times
\prod\nolimits_{i=1}^d
\std(\overline{\bbla}(i))
.
\end{split}
\]
In words, each $\bbT \in M(\bbla)$ is a triple consisting of 
(i) a shortest coset representative $w_\bbT \in {}^{\mu(\overline{\bbla})}\Sigma$,
(ii) a tuple of standard tableaux $\tabs_\bbT = (\tabs_\bbT^{1}, \tabs_\bbT^{2}, \dots)$ of total size $d$, and 
(iii) a tuple of standard tableaux $\tabt_\bbT = (\tabt_\bbT^{1}, \dots, \tabt_\bbT^{d})$ of size $(m,m, \dots, m)$.

Note that we distinguish the two sets $\std(\bbla(\pi^{1}), \bbla(\pi^{2}),\dots)$ 
and $\prod_{j\geq 1} \std(\bbla(\pi^{j}))$.
The former contains multitableaux whose boxes are filled with the numbers from 1 to $d$.
In the latter elements are tuples in which the boxes in the $j$th component are filled with numbers from 1 to $\#\bbla(\pi^{j})$.

We could have used $M(\bbla) = \std(\bbla(\pi^{1}),\bbla(\pi^{2}), \dots)
\times
\prod\nolimits_{i=1}^d
\std(\overline{\bbla}(i))
$ as the definition of the tableaux sets, however for our construction keeping (i) and (ii) separate makes the formulas neater.
\end{defn}
\begin{eg}\label{eg:d=2'}
\begin{enua}    
\item
Let $m=3$, $d=4$ as in \cref{eg:d=2}(a).
Any $\bbT \in M(\bbla)$ is a triple
$(w_\bbT, \tabs_\bbT, \tabt_\bbT)$ where
\[
w_\bbT \in \{e, s_3, s_3s_2, s_3s_2s_1\},
\quad
\tabs_\bbT \in \left\{
\left(
{\tiny\Yvcentermath1\young(12,3)}, 
{\tiny\Yvcentermath1\young(1)} 
\right),
\left(
{\tiny\Yvcentermath1\young(13,2)}, 
{\tiny\Yvcentermath1\young(1)} 
\right)
\right\},
\quad
\tabt_{\bbT} = 
\left(
{\tiny\Yvcentermath1\young(1,2,3)}, 
{\tiny\Yvcentermath1\young(1,2,3)}, 
{\tiny\Yvcentermath1\young(1,2,3)}, 
{\tiny\Yvcentermath1\young(123)} 
\right).
\]
The pairs $(w_\bbT, \tabs_\bbT)$ are in bijection with the following bitableaux of size $(3,1)$:
\[
\begin{array}{c|ccccccc}
     w_\bbT& e & s_3 & s_3 s_2 & s_3s_2s_1   
     \\
     \hline
     \tabs_\bbT = \left(
{\tiny\Yvcentermath1\young(12,3)}, 
{\tiny\Yvcentermath1\young(1)} 
\right)
    &
        \left(
{\tiny\Yvcentermath1\young(12,3)}, 
{\tiny\Yvcentermath1\young(4)} 
\right)
    &
        \left(
{\tiny\Yvcentermath1\young(12,4)}, 
{\tiny\Yvcentermath1\young(3)} 
\right)
    &     
         \left(
{\tiny\Yvcentermath1\young(13,4)}, 
{\tiny\Yvcentermath1\young(2)} 
\right)
    &
         \left(
{\tiny\Yvcentermath1\young(23,4)}, 
{\tiny\Yvcentermath1\young(1)} 
\right)
     \\
     \tabs_\bbT =
     \left(
{\tiny\Yvcentermath1\young(13,2)}, 
{\tiny\Yvcentermath1\young(1)} 
\right)
    &
         \left(
{\tiny\Yvcentermath1\young(13,2)}, 
{\tiny\Yvcentermath1\young(4)} 
\right)
    &
         \left(
{\tiny\Yvcentermath1\young(14,2)}, 
{\tiny\Yvcentermath1\young(3)} 
\right)
    &     
        \left(
{\tiny\Yvcentermath1\young(14,3)}, 
{\tiny\Yvcentermath1\young(2)} 
\right)
    &
         \left(
{\tiny\Yvcentermath1\young(24,3)}, 
{\tiny\Yvcentermath1\young(1)} 
\right)
\end{array}
\]
\item
Let $d=2$ as in \cref{eg:d=2}(b). 
Recall that $\tabs^\nu_R \in \std(\nu)$ is the row-initial tableau.
One has, for $\nu \vdash 2$:
\[
M(e_\la^{\nu}) =
\{ (e, \tabs^\nu_R, (\tabt_\bbT^{1},\tabt_\bbT^{2}))
\in \Sigma_1\times \std(\nu) \times \std(\la)^2\}.
\]
That is, $M(e_\la^{\nu}) \cong \std(\la)^2$.
Next, for distinct partitions $\pi^{i}$ and $\pi^{j}$ with $i < j$:
\[
M([\pi^{i},\pi^{j}]) =
\{(w_\bbT, (\tabs_R^{\iyng{1}}, \tabs_R^{\iyng{1}}), (\tabt_\bbT^{1},\tabt_\bbT^{2})) 
\in \Sigma_2 \times \std(\iyng{1})^2\times (\std(\pi^{i}) \times \std(\pi^{j}))
\}.
\]
 In other words, $M([\la,\la']) \cong \Sigma_2 \times \std(\la) \times \std(\la')$ where $\la \neq \la' \vdash m$.
\end{enua}
\end{eg}
\subsection{The Cellular Basis}
Motivated by the construction of the Specht modules for the twisted Hecke algebra as in \cite[Section 3.3]{LNXII}, we define the following variant of the Young symmetrizer. 
Consider the generic Hecke algebra $\mathcal{H}_{(0, f_\la)}(\Sigma_d)$ where $\la \vdash m$.
For $\nu \vdash d$, $\tabt \in \std(\nu)$, denote by
$z^\la_\nu \in \mathcal{H}_{(0, f_\la)}(\Sigma_d)$ the Young symmetrizer 
\begin{equation}    
z^\la_\nu := x^\la_\nu T_{w(\nu)}y_{\nu^t}^\la ,
\quad
\textup{where}
\quad
x_\nu^\la = \sum_{w \in \Sigma_\nu} (-\sqrt{f_\la})^{\ell(w)} T_w,
\quad
y_\nu^\la := \sum_{w \in \Sigma_\nu} (-\sqrt{f_\la})^{-\ell(w)} T_w.
\end{equation}
Hence, $z^\la_\nu \mathcal{H}_{(0, f_\la)}(\Sigma_d)$ is isomorphic to the Specht module for the twisted Hecke algebra in \cite{LNXII} (referred to as $S^\nu_{\chi_\lambda} \in \cH^{\chi_\la}_d$-mod therein).
Moreover, let $\zeta^\la(x) \in K$ be such that $z_{\nu}^{\la} = \sum_{x\in \Sigma_d} \zeta^\la(x) T_x$.
\begin{eg}
    For $n\le 2$, one has 
\begin{equation}\label{eq:zeta}
z^\la_{\iyng{1}} = 1,
\quad
z^\la_{\iyng{2}} = 1+\sqrt{f_\la}^{-1} T_1,
\quad
z^\la_{\iyng{1,1}} = 1 -\sqrt{f_\la}^{-1} T_1.
\end{equation}
\end{eg}
\begin{defn}\label{def:CTT}
    Let $\bbT, \bbT' \in M(\bbla)$. Define
\begin{equation}\label{def:CST}   
C^\bbla_{\bbT, \bbT'} := \sum_{x  \in \Sigma_{d}} 
\zeta_{\bbT, \bbT'}(x)
H_{w_\bbT^{-1}}
\Big(
C_{\tabt_\bbT^{1}, \tabt_{\bbT'}^{x(1)}}^{\overline{\bbla}(1)}
\otimes
\dots
\otimes
C_{\tabt_\bbT^{d}, \tabt_{\bbT'}^{x(d)}}^{\overline{\bbla}(d)}
\Big)
H_{d_{\tabs_\bbT}^{-1}}
H_{x}
H_{d_{\tabs_{\bbT'}}}
H_{w_{\bbT'}},
\end{equation}
where 
$
\zeta_{\bbT, \bbT'}(x)
:=
\prod_{j\geq 1}
\zeta^{\pi^j}_{\bbla(\pi^j)}(x).
$
\end{defn}
\subsection{Extremal Cases}
In this section, we show that our cellular basis element recovers the Murphy basis for the extremal cases $\Sigma_1 \wr \Sigma_d$ and $\Sigma_m \wr \Sigma_1$ in two distinct flavors.

\begin{lem}\label{lem:md=1}
\begin{enua}
\item
Suppose that $d=1$.
The element $C^\bbla_{\bbT, \bbT'}$ in the Hecke algebra $\cH_q(\Sigma_m)$ is given by 
\[
C^\bbla_{\bbT,\bbT'} = C^\la_{\tabt_\bbT, \tabt_{\bbT'}},
\]
i.e.\ the usual Murphy basis element for $\cH_q(\Sigma_m)$, where supp$(\bbla) = \{\la\}$.
Consequently, the cell module $W(e_\la^{\iyng{1}}) \cong S^\la \in \cH_q(\Sigma_m)$.
\item Suppose that $m=1$. 
Let $\la := \bblambda(\iyng{1}) \in \Pi_d$.
The element $C^\bbla_{\bbT, \bbT'} \in K\wr\cH(d)$ is given by 
\[
C^\bbla_{\bbT, \bbT'} = \sum_{x \in \Sigma_d} H_{d^{-1}_{\tabs_{\bbT}}}
z_\la^{\iyng{1}}
H_{d_{\tabs_{\bbT'}}},
\]
and hence the cell module $W(e^\la_{\iyng{1}})$ is isomorphic to the Specht module with respect to $\la$ in $\cH_{(0,(1+q)^2)}(\Sigma_d)$.
\end{enua}
\end{lem}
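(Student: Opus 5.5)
The plan is to unwind \cref{def:CTT} directly in each extremal case. Every ingredient of the formula degenerates, so the identities follow by bookkeeping; the statements about cell modules then follow from the known identification of Murphy-basis cell modules with Specht modules recalled in Section 2.3.

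For (a), take $d=1$. Then $\Sigma_d=\Sigma_1$ is trivial, so the sum over $x$ has the single term $x=e$, and there are no generators $H_i$. The support of $\bbla$ is a single partition $\la$, with $\bbla(\la)=\iyng{1}$. Hence $\mu(\overline{\bbla})=(1)$, and the coset representative $w_\bbT$ is forced to be $e$. Likewise $\tabs_\bbT=\tabs_R^{\iyng{1}}$ and $d_{\tabs_\bbT}=e$, and the coefficient is $\zeta_{\bbT,\bbT'}(e)=\zeta^{\la}_{\iyng{1}}(e)=1$ by \eqref{eq:zeta}, since $z^\la_{\iyng{1}}=1$. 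Substituting all of this, the formula collapses to $C^{\overline{\bbla}(1)}_{\tabt^1_\bbT,\tabt^1_{\bbT'}}=C^\la_{\tabt_\bbT,\tabt_{\bbT'}}$, which is the Murphy basis element. The map $\bbT\mapsto\tabt_\bbT$ is a bijection $M(e^{\iyng{1}}_\la)\to\std(\la)$. Therefore the cell datum coincides with Murphy's on the block indexed by $\la$, and so $W(e^{\iyng{1}}_\la)\cong S^\la$ by the result of \cite{GL96} recalled earlier. The only point to check is that the poset $\Omega=\Pi_{\Pi_m}(1)$, ordered by \cref{def:domPO}, is just $\Pi_m$ with the dominance order. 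This holds because the second sum in the definition is $\sum_{\gamma<\nu}\#\bbla(\gamma)$, which is $0$ or $1$ according to whether the support lies below $\nu$, and this reproduces the order on $I=\Pi_m$.

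For (b), take $m=1$. Then $B=\cH_q(\Sigma_1)=K$, so every tensor factor $C^{\iyng{1}}_{\tabt,\tabt'}$ equals $1$, and $\Pi_1$ has the single element $\iyng{1}$. Consequently $\mu(\overline{\bbla})=(d)$, so $w_\bbT=e$, and $\tabs_\bbT\in\std(\la)$. The relations reduce as follows: $R=z_1=(1+q)^2$ by \eqref{eq:z1}, and $S=0$, $\rho=0$, with $\sigma$ trivial on $K\otimes K$. So $K\wr\cH(d)=\cH_{(0,(1+q)^2)}(\Sigma_d)$, with $f_{\iyng{1}}=(1+q)^2$ and $\sqrt{f}=1+q$. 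Also $\zeta_{\bbT,\bbT'}(x)=\zeta^{\iyng{1}}_\la(x)$, so $\sum_x\zeta(x)H_x=z^{\iyng{1}}_\la$. The sum over $x$ can then be pulled inside to give $C^\bbla_{\bbT,\bbT'}=H_{d^{-1}_{\tabs_\bbT}}z^{\iyng{1}}_\la H_{d_{\tabs_{\bbT'}}}$. This requires moving $H_{d^{-1}_{\tabs_\bbT}}$ and $H_{d_{\tabs_{\bbT'}}}$ outside the sum, which is legitimate since they do not depend on $x$. The stray $\sum_x$ in the displayed statement should be read as absorbed into $z$.

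For the module identification in (b), I would use the paper's own remark that $z^\la_\nu\cH_{(0,f_\la)}(\Sigma_d)$ is the twisted Specht module of \cite{LNXII}. The cell module $W$ has basis indexed by $\tabs\in\std(\la)$, and it is realised by the classes of $H_{d^{-1}_\tabs}z^{\iyng{1}}_\la$ modulo lower terms. I expect this identification to be the main obstacle. Two things need care. First, one must check that the lower-ideal quotient does not kill anything, which amounts to the full-symmetrizer basis being cellular, taken as known via the isomorphism $H_i\mapsto(1+q)s_i$ with $K[\Sigma_d]$. Second, one must match left versus right module conventions using the anti-involution $*$ of \cref{gHuast}.
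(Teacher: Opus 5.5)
Your proposal is correct and follows the paper's own proof. Both unwind \cref{def:CTT}: in (a) using that $\Sigma_1$ is trivial and $z^\la_{\iyng{1}}=1$, and in (b) using $B=K$, $\mu(\overline{\bbla})=(d)$, $w_\bbT=e$ and $z_1=(1+q)^2$, and then reading the module statements off the known Murphy and full-symmetrizer bases. You are also right that the stray $\sum_{x}$ in (b) is a typo, as the paper's computation $H_{d^{-1}_{\tabs_\bbT}}\big(\sum_x\zeta^{\iyng{1}}_\la(x)H_x\big)H_{d_{\tabs_{\bbT'}}}$ confirms.

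One correction to your extra poset check, which the paper does not make. For $d=1$ the quantity in \cref{def:domPO} is the indicator of $\la\le\nu$, so the definition as written gives $e^{\iyng{1}}_\la\le e^{\iyng{1}}_{\la'}$ if and only if $\la'\le\la$. That is the reverse of the order on $\Pi_m$, not a reproduction of it. This reflects an inconsistency between that definition and the paper's own $m=d=2$ example, not a flaw in your identification of the basis elements.
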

\begin{proof}
For part (a), each $\bbla \in \Omega$ is of the form $e_{\la}^{\iyng{1}}$ for some $\la \vdash m$. Then, $\overline{\bbla} = \la \in \Pi_m$, and hence
$\mu(\bbla) = (1) \vDash 1$.
Next, $M(\bbla) = \Sigma_1 \times \std(\iyng{1}) \times \std(\la)$.
Therefore, each $\bbT \in M(\bbla)$ is of the form $(e, \iyoung{1}, \tabt_\bbT)$ where $\tabt_\bbT \in \std(\la)$.
Observe that $\tabs_\bbT$ has a single component and hence
\[
\zeta_{\bbT, \bbT'}(e)
=
\zeta^{\la}_{\iyng{1}}(e) = 1 
\]
since $z^\la_{\iyng{1}} = 1$.
Therefore,
\[
C^{e_\la^{\iyng{1}}}_{\bbT,\bbT'}
=
\zeta_{\bbT, \bbT'}(e)
C^\la_{\tabt_{\bbT}, \tabt_{\bbT'}}
H_e
=
C^\la_{\tabt_{\bbT}, \tabt_{\bbT'}},
\]
which coincides with the Murphy basis element for $\cH_{q}(\Sigma_m)$.

For part (b),  each $\bbla \in \Omega$ is of the form $e_{\iyng{1}}^\la$ for some $\la \vdash d$.
Thus, $\overline{\bbla} = (\iyng{1}, \dots, \iyng{1}) \in \Pi_1^d$, and hence
$\mu(\bbla) = (d) \vDash d$.
One gets $M(\bbla) = \Sigma_1 \times \std(\la) \times \prod_{i=1}^d\std(\iyng{1})$.
That is, each $\bbT \in M(\bbla)$ is of the form $(e, \tabs_\bbT, (\iyoung{1}, \dots, \iyoung{1}))$ where $\tabs_\bbT \in \std(\la)$.
Again, $\tabs_\bbT$ has one single component.
Thus,
\[
C^{e^\la_{(1)}}_{\bbT,\bbT'}
=
\sum_{x\in \Sigma_d}
\zeta^{\iyng{1}}_\la(x)
(C^{(1)}_{\iyoung{1}, \iyoung{1}})^{\otimes d} 
H_{d^{-1}_{\tabs_{\bbT}}}H_x H_{d_{\tabs_{\bbT'}}}
=
H_{d^{-1}_{\tabs_{\bbT}}}
\Big(
\sum_{x\in \Sigma_d}
\zeta^{\iyng{1}}_\la(x)
H_x 
\Big)
H_{d_{\tabs_{\bbT'}}}.
\]
Since $m=1$, it follows from \eqref{eq:z1} that $B\wr\cH(d) \cong \cH_{(0, (1+q)^2)}(\Sigma_d)$, and hence $C^{e^\la_{(1)}}_{\bbT,\bbT'}$ coincides with the full symmetrizer basis element therein.
\end{proof}

\section{Cellularity of Hu Algebras}
In this section, we show that the Hu algebra $B\wr\cH(2)$ is a cellular algebra.
Although our construction is expected to hold for $d>2$, a rigorous proof is highly technical and falls outside the scope of this work.

\subsection{Cellular Basis Elements for Hu Algebras}
\begin{lem}\label{lem:d=2}
Suppose that $d=2$.
The elements $C^\bbla_{\bbT, \bbT'}$ in the Hu algebra $\cH_q(\Sigma_m)\wr\cH(2)$ are given by the following:
\[
C^\bbla_{\bbT, \bbT'} =
\begin{cases}
C^{\la}_{\tabt_\bbT^{1},\tabt_{\bbT'}^{1}} \otimes C^{\la}_{\tabt_\bbT^{2},\tabt_{\bbT'}^{2}}
+
\sqrt{f_\la}^{-1}
\big(
C^{\la}_{\tabt_\bbT^{1},\tabt_{\bbT'}^{2}} \otimes C^{\la}_{\tabt_\bbT^{2},\tabt_{\bbT'}^{1}}
\big) 
H_{1}
&
\textup{if } \bbla = e_\la^{\iyng{2}};
\\
C^{\la}_{\tabt_\bbT^{1},\tabt_{\bbT'}^{1}} \otimes C^{\la}_{\tabt_\bbT^{2},\tabt_{\bbT'}^{2}}
-
\sqrt{f_\la}^{-1}
\big(
C^{\la}_{\tabt_\bbT^{1},\tabt_{\bbT'}^{2}} \otimes C^{\la}_{\tabt_\bbT^{2},\tabt_{\bbT'}^{1}}
\big) 
H_{1}
&
\textup{if } \bbla = e_\la^{\iyng{1,1}};
\\
H_{w_{\bbT}^{-1}}
\big(
C^{\pi^{i}}_{\tabt_\bbT^{1},\tabt_{\bbT'}^{1}} \otimes C^{\pi^{j}}_{\tabt_\bbT^{2},\tabt_{\bbT'}^{2}}
\big) 
H_{w_{\bbT'}}
&\textup{if } \bbla = [\pi^{i},\pi^{j}].
\end{cases}
\]
where $i < j$ in the third case.
\end{lem}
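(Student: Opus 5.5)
The proof is a direct unwinding of \cref{def:CTT} in the case $d=2$, separated according to the three shapes of $\bbla\in\Omega$ listed in \cref{eg:d=2}(b). The data are read off from \cref{eg:d=2'}(b). For $\bbla=e_\la^\nu$ with $\nu\vdash 2$ we have $\mu(\bbla)=(2)$, so $w_\bbT=w_{\bbT'}=e$. Since $\std(\nu)$ is a singleton for $\nu\vdash 2$, we have $\tabs_\bbT=\tabs_{\bbT'}=\tabs_R^\nu$, and so $d_{\tabs_\bbT}=d_{\tabs_{\bbT'}}=e$. For $\bbla=[\pi^i,\pi^j]$ with $i<j$ we have $\mu(\bbla)=(1,1)$, so $w_\bbT,w_{\bbT'}\in{}^{(1,1)}\Sigma=\Sigma_2$. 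The tableaux $\tabs_\bbT$ are then pairs of one-box tableaux, so again $d_{\tabs_\bbT}=e$. In every case the sum over $x\in\Sigma_2=\{e,s_1\}$ therefore has only two terms. The factors $H_{d_{\tabs}^{\pm1}}$ equal $H_e=1$ and drop out.

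Next I compute the coefficients $\zeta_{\bbT,\bbT'}(x)$. Since $\bbla$ has support of size one in the first two cases, $\zeta_{\bbT,\bbT'}(x)=\zeta^\la_\nu(x)$. By \eqref{eq:zeta}, $z^\la_{\iyng{2}}=1+\sqrt{f_\la}^{-1}T_1$ and $z^\la_{\iyng{1,1}}=1-\sqrt{f_\la}^{-1}T_1$. Hence $\zeta(e)=1$ and $\zeta(s_1)=\pm\sqrt{f_\la}^{-1}$. In the mixed case $\bbla(\pi^i)=\bbla(\pi^j)=\iyng{1}$, so $\zeta_{\bbT,\bbT'}(x)=\zeta^{\pi^i}_{\iyng{1}}(x)\,\zeta^{\pi^j}_{\iyng{1}}(x)$. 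Because $z_{\iyng{1}}=1\in\cH(\Sigma_1)$, I read $\zeta^{\pi}_{\iyng{1}}$ as the coefficient function of $1$ viewed inside $\Sigma_2$. This gives $\zeta(e)=1$ and $\zeta(s_1)=0$.

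Substituting into \eqref{def:CST} finishes the computation. For $x=e$ the term is $C^{\overline{\bbla}(1)}_{\tabt^1_\bbT,\tabt^1_{\bbT'}}\otimes C^{\overline{\bbla}(2)}_{\tabt^2_\bbT,\tabt^2_{\bbT'}}$. For $x=s_1$ the indices $x(1)=2$ and $x(2)=1$ swap the right tableaux, which produces the cross term $\big(C^\la_{\tabt^1_\bbT,\tabt^2_{\bbT'}}\otimes C^\la_{\tabt^2_\bbT,\tabt^1_{\bbT'}}\big)H_1$. This cross term is well defined precisely because $\overline{\bbla}=(\la,\la)$ in the first two cases. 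In the third case only the $x=e$ term survives, sandwiched between $H_{w_\bbT^{-1}}$ and $H_{w_{\bbT'}}$, with $\overline{\bbla}=(\pi^i,\pi^j)$.

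The only delicate point is the convention in the mixed case. There the factors $\zeta^{\pi}_{\iyng{1}}$ formally live in $\cH(\Sigma_1)$, and the product over $j$ must be interpreted so that $s_1\notin\Sigma_{(1,1)}$ receives coefficient $0$. One should also note that for $x=s_1$ the expression $C^{\pi^i}_{\tabt^1_\bbT,\tabt^{2}_{\bbT'}}$ would not even be well typed when $\pi^i\neq\pi^j$. That typing consideration forces exactly this convention and confirms the third formula.
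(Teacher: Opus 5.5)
Your proposal is correct and follows the same direct unwinding of \cref{def:CTT} that the paper carries out. You read off $w_\bbT=e$ and $d_{\tabs_\bbT}=e$ for $e^\nu_\la$, take the coefficients from \eqref{eq:zeta}, and keep only the $x=e$ term for $[\pi^i,\pi^j]$, where your convention that $\zeta_{\bbT,\bbT'}$ vanishes off the Young subgroup $\Sigma_{\mu(\bbla)}$ is exactly what the paper assumes silently when it writes only the term $z^{\pi^i}_{\iyng{1}}(e)\,z^{\pi^j}_{\iyng{1}}(e)\,H_e$.
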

\begin{proof}
First, consider the case $\bbla = e_\la^\nu$ for some $\nu \vdash 2$.
From \cref{eg:d=2'}(b), any cellular basis element is of the form
\[
C^{e_\la^\nu}_{
(1, \tabs_R^\nu, (\tabt_{\bbT}^{1},\tabt_{\bbT}^{2}))
, (1, \tabs_R^\nu, (\tabt_{\bbT'}^{1},\tabt_{\bbT'}^{2}))
}
=
\sum_{x \in \Sigma_2} 
\zeta^{\la}_{\nu}(x)
(C^\la_{\tabt_{\bbT}^{1},\tabt_{\bbT'}^{x(1)}}
\otimes C^\la_{\tabt_{\bbT}^{2},\tabt_{\bbT'}^{x(2)}})
H_x.
\]
The first two cases of the assertion follow from \eqref{eq:zeta}.
Next, consider the case $\bbla = [\pi^{i}, \pi^{j}]$.
We identify $\bbT = 
(w_\bbT, (\tabs_R^{\iyng{1}},\tabs_R^{\iyng{1}}), (\tabt_{\bbT}^{1},\tabt_{\bbT}^{2}))$ with $(w_\bbT, \tabt_{\bbT}) \in \Sigma_2 \times (\std(\pi^i)\times \std(\pi^j))$.
Any cellular basis element is of the form
\[
C^{[\pi^i, \pi^j]}_{(w_{\bbT},\tabs_\bbT, \tabt_{\bbT}), (w_{\bbT'}, \tabs_{\bbT'}, \tabt_{\bbT'})}
=
z^{\pi^{i}}_{\iyng{1}}(e)
z^{\pi^{j}}_{\iyng{1}}(e)
H_{w_{\bbT}^{-1}}
(C^{\pi^{i}}_{\tabt_{\bbT}^{1},\tabt_{\bbT'}^{1}}
\otimes C^{\pi^{j}}_{\tabt_{\bbT}^{2},\tabt_{\bbT'}^{2}})
H_e
H_{w_{\bbT'}}.
\]
Therefore, the last part of the assertion follows since  $z_{\iyng{1}}^\la = 1$ for any $\la\vdash m$.
\end{proof}
\subsection{Proof of the Cellularity Theorem}

\begin{thm}\label{thm:cellularity}
The Hu algebra $B \wr \cH(2)$ is a cellular algebra with cell datum $(\Omega,M,C,*)$, where
$\Omega = \Pi_{\Pi_m}(2)$ (see \cref{defn:lambda}), 
$M(\bbla)$ is defined as in \cref{defn:m-set},
$C_{\bbT, \bbT'}^\bbla$ is defined as in \cref{lem:d=2}(a),
and the involutive anti-automorphism $\ast$ is from \cref{gHuast}.
\end{thm}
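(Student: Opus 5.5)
We verify the three axioms C1--C3 for the explicit elements of \cref{lem:d=2}. Throughout we use the PBW-type decomposition $B\wr\cH(2) = B^{\otimes 2}\oplus B^{\otimes 2}H_1$, free over $B^{\otimes 2}$ of rank $2$. This follows from the wreath relation $H_1(a\otimes b)=(b\otimes a)H_1$ and $H_1^2=z_m\in B^{\otimes 2}$, and is the known basis theorem for quantum wreath products.

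\textbf{C1.} For C1 we count and check spanning. The number of pairs is
\[
\sum_{\la}\bigl(2\,|\std(\la)|^4\bigr)+\sum_{i<j}4\,|\std(\pi^i)|^2|\std(\pi^j)|^2 = \Bigl(\sum_\la |\std(\la)|^2\Bigr)^2\cdot 2 = 2(\dim B)^2 ,
\]
which equals $\dim B\wr\cH(2)$.

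For spanning, the elements with $\bbla=[\pi^i,\pi^j]$ and $w_\bbT=w_{\bbT'}=e$ give all $C^{\pi^i}\otimes C^{\pi^j}$. Those with $w_\bbT=e$, $w_{\bbT'}=s_1$ give $(C^{\pi^i}\otimes C^{\pi^j})H_1$. Those with $w_\bbT=s_1$, $w_{\bbT'}=e$ give $H_1(C^{\pi^i}\otimes C^{\pi^j}) = (C^{\pi^j}\otimes C^{\pi^i})H_1$, and those with $w_\bbT=w_{\bbT'}=s_1$ give $(C^{\pi^j}\otimes C^{\pi^i})z_m$, which is a nonzero multiple $f_{\pi^j,\pi^i}$ of $C^{\pi^j}\otimes C^{\pi^i}$ by \cref{cor:zmaction}.

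For equal shapes $\la$, the sum and difference of $C^\la_{e^{(2)}_\la}$ and $C^\la_{e^{(1,1)}_\la}$ recover $C^\la\otimes C^\la$ and $(C^\la\otimes C^\la)H_1$, using $2\in K^\times$. Hence these elements span, so they form a basis, and $C$ is injective.

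\textbf{C2.} By \cref{gHuast}, $*$ fixes $H_1$ and applies the Murphy involution $(C^\la_{\tabs,\tabt})^*=C^\la_{\tabt,\tabs}$ on each tensor factor. In the first two cases we get
\[
\bigl((C_{t^1,t'^2}\otimes C_{t^2,t'^1})H_1\bigr)^* = H_1(C_{t'^2,t^1}\otimes C_{t'^1,t^2}) = (C_{t'^1,t^2}\otimes C_{t'^2,t^1})H_1 ,
\]
which is the term of $C^\bbla_{\bbT',\bbT}$. The third case is immediate, since $(H_{w^{-1}})^*=H_{w}$ for $w\in\Sigma_2$.

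\textbf{C3.} It suffices to check C3 for the generators $a\in B^{\otimes 2}$ and $a=H_1$. The ideal $A_{<\bbla}$ is taken with respect to the dominance order of \cref{def:domPO}, under which $e^{(1,1)}_\la<e^{(2)}_\la$ and the mixed elements $[\pi^i,\pi^j]$ lie between the $e_{\pi^i}$'s and the $e_{\pi^j}$'s.

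For $a=b'\otimes b''$ we use the Murphy cellularity in each factor, as in \cref{cor:zmaction}. This rewrites $a(C^{\la'}_{\tabs_1,\cdot}\otimes C^{\la''}_{\tabt_1,\cdot})$ as a combination of basis elements with the same right tableaux, modulo $B^{\otimes2}_{<(\la',\la'')}$, where the latter is taken in the product order. For the $H_1$-part of $e^\nu_\la$ we must see that the same coefficients $r$ appear; this uses that $a\,(X)H_1$ is handled identically, because multiplication on the left does not see the right tableaux.

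For $a=H_1$ acting on $e^{(2)}_\la$ we compute
\[
H_1\bigl(X+\sqrt{f_\la}^{-1}YH_1\bigr) = \sigma(X)H_1+\sqrt{f_\la}^{-1}\sigma(Y)z_m = \sqrt{f_\la}\Bigl(\sqrt{f_\la}^{-1}\sigma(X)H_1+\sigma(Y)\Bigr),
\]
using $z_m\sigma(Y)=f_\la\sigma(Y)$ from \cref{cor:zmaction}. Here $\sigma(Y)=C_{t^2,t'^1}\otimes C_{t^1,t'^2}$ is the basis element with left tableaux swapped, so $H_1$ acts by $\pm\sqrt{f_\la}$ times a swap of the left pair; for $e^{(1,1)}_\la$ the sign is $-$. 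This is exactly where the choice of the symmetrizer $z^\la_\nu$ in $\cH_{(0,f_\la)}(\Sigma_2)$ enters. The mixed case is a direct computation: $H_1$ sends $w_\bbT=e$ to $w_\bbT=s_1$, and sends $s_1$ to $H_1^2=z_m$, a scalar on the $(\pi^j,\pi^i)$ block.

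\textbf{Main obstacle.} The main obstacle is the lower-order terms in the $B^{\otimes2}$-action. When $a=b'\otimes b''$ acts on a basis element with $\bbla=e^\nu_\la$, the error terms lie in spans of $C^{\mu'}\otimes C^{\mu''}$ and $(C^{\mu'}\otimes C^{\mu''})H_1$ with $(\mu',\mu'')<(\la,\la)$ in the product order. These must be re-expressed, via the basis change in C1, as combinations of $C^{[\mu',\mu'']}$ and $C^{e^\cdot_\mu}$, and every such $\bbla'$ must be shown to satisfy $\bbla'<\bbla$ in \cref{def:domPO}. The delicate points are:
\begin{itemize}
\item mixed terms with $\mu'<\la$ and $\mu''$ incomparable to $\la$;
\item the fact that $e^{(2)}_\la$-expressions produce $e^{(1,1)}_\la$ only through the $\la$-block, which is cancelled exactly by the computation above.
\end{itemize}
I would check this by verifying that $\sum_{\bbla'\le\bbla}$ spans a two-sided ideal. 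The approach is to identify this span with $(J\otimes J)\oplus(J\otimes J)H_1$ for the appropriate Murphy ideals $J$, which is $H_1$-stable by the intertwining property of $H_1$.
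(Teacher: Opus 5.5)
You follow the paper's route: C1 from the PBW basis, C2 by direct computation, and C3 on the generators $B^{\otimes 2}$ and $H_1$. As in the paper, the coefficients $r_{b'}r_{b''}$ reappear in the $H_1$-part of $C^{e^\nu_\la}$, and $H_1$ acts by $\pm\sqrt{f_\la}$, resp.\ by $1$ and $f_{\la,\la'}$ on the mixed cells. Your dimension count and explicit change of basis in C1 are more complete than the paper's one-line appeal to $\ch K\neq 2$.

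\textbf{The gap.} It is the step you yourself call the main obstacle. You never show that the lower-order terms created by $a\in B^{\otimes 2}$ lie in $\cA_{<\bbla}$. This is exactly where the order on $\Omega$ enters, so C3 is not established.

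\textbf{Why your sketch would not close it.} The spans $\cA_{\le\bbla}$ are in general not of the form $(J\otimes J)\oplus(J\otimes J)H_1$:
\begin{itemize}
\item With your ordering $e^{(1,1)}_\la<e^{(2)}_\la$, the span $\cA_{\le e^{(1,1)}_\la}$ contains $C^{e^{(1,1)}_\la}_{\bbT,\bbT'}$. It does not contain its $B^{\otimes 2}$-component $\tfrac12\bigl(C^{e^{(2)}_\la}_{\bbT,\bbT'}+C^{e^{(1,1)}_\la}_{\bbT,\bbT'}\bigr)$.
\item With mixed elements sitting below the $e_{\pi^j}$'s, the span $\cA_{\le[\pi^i,\pi^j]}$ contains $C^{\pi^i}\otimes C^{\pi^j}$ but not $C^{\pi^j}\otimes C^{\pi^j}$. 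Yet any $J\otimes J$ containing the former contains the latter.
\end{itemize}
Moreover, two-sidedness of $\cA_{\le\bbla}$ only places $a\,C^\bbla_{\bbT,\bbT'}$ in $\cA_{\le\bbla}$. C3 needs the error terms to have no $\bbla$-component. That is a statement about which strata they lie in, not an ideal property.

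\textbf{The missing argument.} It is a direct check, which the paper's sentence after the tensor-product congruence supplies.
\begin{enumerate}
\item Apply cellularity of $B$ in each factor, first commuting $a$ past $H_1$ when $w_\bbT=s_1$. Every error term is then a combination of $C^{\mu^1}_{\cdot,\cdot}\otimes C^{\mu^2}_{\cdot,\cdot}$ and $(C^{\mu^1}_{\cdot,\cdot}\otimes C^{\mu^2}_{\cdot,\cdot})H_1$. Here $(\mu^1,\mu^2)$ is obtained from $(\overline{\bbla}(1),\overline{\bbla}(2))$, or its swap, by strictly lowering at least one entry in the cellular order of $B$. In particular, for $\bbla=e^\nu_\la$ your worry about a $\mu''$ incomparable to $\la$ does not arise.
\item Your C1 change of basis rewrites such an element as a combination of $C^{\bbmu}$, up to still lower terms. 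Here $\bbmu=[\mu^1,\mu^2]$ if $\mu^1\neq\mu^2$, and $\bbmu\in\{e^{(2)}_\mu,e^{(1,1)}_\mu\}$ if $\mu^1=\mu^2=\mu$.
\item It remains to check from \cref{def:domPO} that strictly lowering one or both points of the support of $\bbla$ strictly lowers $\bbla$. This includes the collision case $e^{\nu'}_\mu<[\la,\la']$ for $\mu\le\la$ and $\mu\le\la'$.
\end{enumerate}

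\textbf{A further caution, inherited from the paper.} \cref{cor:zmaction} holds only modulo lower terms. For $m=2$ one has $C^{(1,1)}\otimes C^{(1,1)}=1\otimes 1$, but $z_2$ is not a scalar since $f_{(1,1)}\neq f_{(1,1),(2)}$. So the $(s_1,s_1)$-elements are multiples of $C^{\pi^j}\otimes C^{\pi^i}$ only up to lower terms. Spanning in C1 should therefore be obtained by a triangular induction on pairs of shapes. The $H_1$-formulas in C3 are likewise only congruences, which is all C3 needs.
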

\begin{proof}   
    For C1),
    by the PBW basis theorem \cite[Theorem 3.3.1, Section 4.1]{LNX}, the set 
    \[
    \{
    (C^{\la^{1}}_{\tabs_1,\tabt_1}\otimes C^{\la^{2}}_{\tabs_2,\tabt_2})H_w
    ~|~
    \la^{i} \vdash m,\ \tabs_i, \tabt_i \in \std(\la^{i}), \ w \in \Sigma_2
    \}
    \]
    is a $K$-basis of $B\wr\cH(d)$. 
    By \cref{lem:d=2}, the set $\{C^\bbla_{\bbT, \bbT'} ~|~ \bbT, \bbT' \in M(\bbla)\}$ forms a $K$-basis of $B\wr \cH(2)$ since $\ch K \neq 2$.

    For C2), 
    it suffices to check $(C^{\bbla}_{\bbT, \bbT'})^* = C^{\bbla}_{\bbT', \bbT}$ for each of the three cases  in \cref{lem:d=2}. 
    Within this proof, we write $x_i := \tabt_\bbT^{i}, y_i := \tabt_{\bbT'}^{i}$ for brevity.
    For the first two cases, the equality holds since
\begin{equation}
\begin{split}    
(C^\bbla_{(e,\tabs_R^\nu,(x_1,x_2)), (e,\tabs_R^\nu,(y_1,y_2))})^\ast
&=
(
C^{\la}_{x_1,y_1} \otimes C^{\la}_{x_2,y_2}
)^\ast 
\pm
\sqrt{f_\la}^{-1}
H_{1}^\ast
(
C^{\la}_{x_1,y_2} \otimes C^{\la}_{x_2,y_1}
) 
^\ast
\\
&=
(C^{\la}_{y_1,x_1} \otimes C^{\la}_{y_2,x_2})
\pm
\sqrt{f_\la}^{-1}
(
  C^{\la}_{y_1,x_2} 
  \otimes
 C^{\la}_{y_2,x_1}
) 
H_{1}
\\
&= C^\bbla_{(e,\tabs_R^\nu,(y_1,y_2)), (e,\tabs_R^\nu,(x_1,x_2))}.
\end{split}
\end{equation}
For the third case, consider the first subcase when $w_\bbT= w_{\bbT'}$.
It is easy to check that $(C^\bbla_{\bbT, \bbT'})^\ast = C^\bbla_{\bbT', \bbT}$ since no $H_1$'s are involved. 
For the second subcase when $w_\bbT \neq w_{\bbT'}$, one has
\begin{equation}
\begin{split}
(C^\bbla_{(s_1,\tabs_R^\nu,(x_1,x_2)), (e,\tabs_R^\nu,(y_1,y_2))})^\ast
&=
H_1^\ast (C^{\pi^{j}}_{x_2, y_2} \otimes C^{\pi^{i}}_{x_1, y_1})^\ast
\\
&=
(C^{\pi^{i}}_{y_1, x_1} \otimes C^{\pi^{j}}_{y_2, x_2})H_1 
\\
&=C^\bbla_{(e,\tabs_R^\nu,(y_1,y_2)), (s_1,\tabs_R^\nu,(x1,x2))},
\end{split}
\end{equation}
and hence C2) is verified by symmetry.
For C3), we need to show that for all $a \in B \wr \cH(2)$ there exist scalars 
$r_a(\bbS,\bbT)\in K$ such that
\begin{equation}
a\,C^{\bbla}_{\bbT,\bbT'}
\;\equiv\;
\sum_{\bbS \in M(\bbla)} r_a(\bbS,\bbT)\,
C^{\bbla}_{\bbS,\bbT'}
\pmod{\cala_{<\bbla}},
\end{equation}
where $\cala_{<\bbla} = \textup{Span}_K
\{ C^{\bbmu}_{\bbT,\bbT'}
\mid \bbmu < \bbla,\, \bbT, \bbT' \in  M(\bbmu)\}$.
First, consider the case $a=b_1\otimes b_2 \in B\otimes B$.
By the cellularity of $B$ in each tensor factor, one has, for any $\la^{i} \vdash m$, $\tabt_i, \tabt'_i \in \std(\la^{i})$:
\begin{equation}
\begin{split}
&(b_1 \otimes b_2)
\bigl(C^{\lambda^{1}}_{\tabt_1,\tabt'_1}
\otimes
C^{\lambda^{2}}_{\tabt_2,\tabt'_2}\bigr)
\\
&\quad\equiv
\sum_{(\tabs_1,\tabs_2)\in\std(\la^{1})\times \std(\la^{2})}
r_{b_1}(\tabs_1,\tabt_1)
r_{b_2}(\tabs_2,\tabt_2)
\bigl(C^{\lambda^{1}}_{\tabs_1,\tabt'_1}
\otimes
C^{\lambda^{2}}_{\tabs_2,\tabt'_2}\bigr)
\pmod{B_{<\lambda^{1}}\otimes B_{<\lambda^{2}}}.
\end{split}
\end{equation}
Note that the lower terms in $B_{<\lambda^{1}}\otimes B_{<\lambda^{2}}$ correspond to elements of the form $C^{\bbla'}_{\bullet,\bullet}$ with supp$(\bbla')$ obtained by replacing certain partitions in supp$(\bbla)$ by smaller partitions with respect to the dominance order on $\Pi_m$.
Thus it follows from \cref{def:domPO} that both $B_{<\lambda^{1}}\otimes B_{<\lambda^{2}}$ and $(B_{<\lambda^{1}}\otimes B_{<\lambda^{2}})H_1$ lie in $\cA_{<\bbla}$.

Next, define $r_{b_1\otimes b_2}(\bbS,\bbT) := 
r_{b_1}(\tabt_\bbS^{1}, \tabt_\bbT^{1}) r_{b_2}(\tabt_\bbS^{2}, \tabt_\bbT^{2})$.
Consider the highest order terms in the first two cases in \cref{lem:d=2}(a). One has
\begin{equation}
\begin{split}
&a C^\bbla_{(e,\tabs^{\nu}_R,\tabt_\bbT), (e,\tabs^{\nu}_R,\tabt_{\bbT'})}
\\
&\equiv
\sum_{\substack{\bbS = (e,\tabs^{\nu}_R, (z_1,z_2))\\ \in M(\bbla)}}
r_a(\bbS,\bbT)
C^{\la}_{z_1,y_2} \otimes C^{\la}_{z_2,y_1}
\pm
\sum_{\substack{\bbS = (e,\tabs^{\nu}_R, (z_1,z_2))\\ \in M(\bbla)}}
r_a(\bbS,\bbT)
\sqrt{f_\la}^{-1}
(
C^{\la}_{z_1,y_1} \otimes C^{\la}_{z_2,y_2}
)
H_{1}
\\
&\equiv 
\sum_{\bbS = (e,\tabs^{\nu}_R,(z_1,z_2))
\in M(\bbla)}
r_a(\bbS, \bbT) \big(
C^{\la}_{z_1,y_1} \otimes C^{\la}_{z_2,y_2} 
\pm
\sqrt{f_\la}^{-1}
(
C^{\la}_{z_1,y_2} \otimes C^{\la}_{z_2,y_1}
)
H_{1}
\big)
\\
&\equiv 
\sum_{\bbS \in M(\bbla)}
r_a(\bbS, \bbT) 
C^\bbla_{\bbS, \bbT'}
\pmod{\cala_{<\bbla}}.
\end{split}
\end{equation}
The second equivalence  relies on the fact that each of the cellular basis elements we constructed is of the form 
$
 C^\bullet_{x_1\bullet}\otimes C^\bullet_{x_2,\bullet}
 \pm 
\sqrt{f_\la}^{-1}
(C^\bullet_{x_1\bullet}\otimes C^\bullet_{x_2,\bullet}) H_1$, which results in the coincidence that the coefficients in both summations are given by the same function $r_a(\bbS,\bbT)$.
We omit the verification for the last case in \cref{lem:d=2} as it is similar but simpler.  

Finally, consider the case  $a=H_1$. 
We will construct the function $r_{H_1}(\bbS, \bbT)$ for all $\bbS, \bbT \in M(\bbla)$.
Suppose that $\bbla = e_\nu, \ \nu\vdash 2$. Set
\begin{equation}\label{eq:H1a}
    r_{H_1}(\bbS, (e, \tabs_\bbT, (x_1, x_2))) = 
    \begin{cases}
        \sqrt{f_\la} &\textup{if }\bbS=(e,\tabs_\bbT, (x_2,x_1)), \ \nu = (2);
        \\
        -\sqrt{f_\la} &\textup{if }\bbS=(e,\tabs_\bbT, (x_2,x_1)), \ \nu = (1,1);
        \\
        0 &\textup{otherwise}.
    \end{cases}
\end{equation}
Then, by \cref{lem:d=2} and \cref{cor:zmaction}, one has
\[
\begin{split}
H_1 C^\bbla_{\bbT, \bbT'} 
&= (C^\la_{x_2,y_2} \otimes C^\la_{x_1,y_1})H_1 
\pm \sqrt{f_\la}^{-1} H^2_1 (C^\la_{x_2,y_1}\otimes C^\la_{x_1,y_2}) 
\\
&= \pm\sqrt{f_\la} C^\bbla_{(e,\tabs_\bbT,(x_2,x_1)),\bbT'}
= \sum\nolimits_{\bbS \in M(\bbla)} r_{H_1}(\bbS,\bbT) C^\bbla_{\bbS,\bbT'}.
\end{split}
\]
Suppose that $\bbla = [\la, \la']$ for some $\la\neq \la' \vdash m$. Set
\begin{equation}\label{eq:H1b}
    r_{H_1}(\bbS, \bbT) = 
    \begin{cases}
        1 &\textup{if }\bbS =(s_1 w_\bbT, \tabs_\bbT, \tabt_\bbT), \ s_1 w_\bbT > w_\bbT;
        \\
        f_{\la, \la'} &\textup{if }\bbS =(s_1 w_\bbT, \tabs_\bbT, \tabt_\bbT), \ s_1 w_\bbT < w_\bbT;
        \\
        0 &\textup{otherwise}.
    \end{cases}    
\end{equation}
Again by \cref{lem:d=2}, a direct case-by-case computation shows that
\[
H_1 C^\bbla_{(e,\tabs_\bbT,\tabt_\bbT),\bbT'} = C^\bbla_{(s_1,\tabs_\bbT,\tabt_\bbT),\bbT'},
\quad
H_1 C^\bbla_{(s_1,\tabs_\bbT,\tabt_\bbT),\bbT'} = f_{\la,\la'}C^\bbla_{(e,\tabs_\bbT,\tabt_\bbT),\bbT'},
\]
where we used the fact that $f_{\la,\la'} = f_{\la',\la}$.
In other words,
$
H_1 C^\bbla_{\bbT, \bbT'} 
= \sum_{\bbS \in M(\bbla)} r_{H_1}(\bbS,\bbT) C^\bbla_{\bbS,\bbT'}$.
That is, C3) is verified for all the generators of $\cala(m)$, and hence holds true for all elements in $\cala(m)$. 
The Hu algebra is indeed cellular.
\end{proof}

\subsection{Robinson--Schensted Correspondence}

The property C1) gives a bijection on the index sets which can be regarded as a Robinson--Schensted correspondence for $\Sigma_m \wr \Sigma_2$, which aligns with Okada's generalized Robinson-Schensted correspondence \cite{O90}.
Note that this bijection
already differs from neither the one obtained from a decomposition of a Steinberg variety in  \cite{HL26}, 
nor the exotic Robinson-Schensted correspondence in \cite{NRS21} for $d=2$.

\begin{eg}
Let $m=d=2$. 
Fix a total order on $\Pi_2$ by letting $\pi^1 = \iyng{1,1}$ and $\pi^2= \iyng{2}$.
Recall that 
$
\Pi_{\Pi_2}(2) = \{ 
e_{(2)}^{(2)},  e_{(2)}^{(1,1)}, e_{(1,1)}^{(2)}, e_{(1,1)}^{(1,1)},
[(1,1),(2)]
\}$.
It is convenient to write
\[
\std(\bbla) := 
\std(\bbla(\iyng{1,1}),\bbla(\iyng{2}))
\times
(\std(\overline{\bbla}(1))
\times 
\std(\overline{\bbla}(2))),
\]
and hence
\[
\begin{split}
&\std(e_{(2)}^{(2)}) = \{(\iyoung{12}, (\iyoung{12}),\iyoung{12}))\},
\quad
\std(e_{(1,1)}^{(2)}) = \{(\iyoung{12}, (\iyoung{1,2},\iyoung{1,2}))\},
\\
&\std(e_{(2)}^{(1,1)}) = \{(\iyoung{1,2}, (\iyoung{12},\iyoung{12}))\},
\quad
\std(e_{(1,1)}^{(1,1)}) = \{(\iyoung{1,2}, (\iyoung{1,2},\iyoung{1,2}))\},
\\
&\std([\iyng{2},\iyng{1,1}]) = \{(
(\iyoung{1}, \iyoung{2}), (\iyoung{1},\iyoung{1})),
(\iyoung{2}, \iyoung{1}), (\iyoung{1},\iyoung{1}))
\}.
\end{split}
\]
Let $s$ and $u$ be the generators of the first and second $\Sigma_2$ in $\Sigma_2 \wr \Sigma_2$, respectively. 
Identify $\Sigma_2 \wr \Sigma_2$ with the subgroup of $\Sigma_4$ generated by $s_1 = (1~2), s_3 = (3~4)$, and $t = (3~4~1~2)$ by $(s,1) \mapsto s_1, (1,s) \mapsto s_3$, and $u \mapsto t$. 
One can write down an evident bijection using \cref{lem:d=2}. That is,
\[
\begin{array}{cccc}
     1\mapsto
     \big(
     (\iyoung{12}, (\iyoung{1,2},\iyoung{1,2})), 
     (\iyoung{12}, (\iyoung{1,2},\iyoung{1,2}))
     \big),
     &s_1s_3
     \mapsto
     \big(
     (\iyoung{12}, (\iyoung{12}),\iyoung{12})),
     (\iyoung{12}, (\iyoung{12}),\iyoung{12}))
     \big),
     \\
     t\mapsto
     \big(
     (\iyoung{1,2}, (\iyoung{1,2},\iyoung{1,2})),
     (\iyoung{1,2}, (\iyoung{1,2},\iyoung{1,2}))
     \big),
     &s_1s_3t\mapsto
     \big(
     (\iyoung{1,2}, (\iyoung{12},\iyoung{12})),
     (\iyoung{1,2}, (\iyoung{12},\iyoung{12}))
     \big),
     \\
     s_1\mapsto
     \big(
     (\iyoung{2}, \iyoung{1}), (\iyoung{1},\iyoung{1})),
     (\iyoung{2}, \iyoung{1}), (\iyoung{1},\iyoung{1}))
     \big),
     &s_3\mapsto
     \big(
     (\iyoung{1}, \iyoung{2}), (\iyoung{1},\iyoung{1})),
     (\iyoung{1}, \iyoung{2}), (\iyoung{1},\iyoung{1}))
     \big),
     \\
     s_1t\mapsto 
     \big(
     (\iyoung{2}, \iyoung{1}), (\iyoung{1},\iyoung{1})),
     (\iyoung{1}, \iyoung{2}), (\iyoung{1},\iyoung{1}))
     \big),
     &s_3t\mapsto 
     \big(
     (\iyoung{1}, \iyoung{2}), (\iyoung{1},\iyoung{1})),
     (\iyoung{2}, \iyoung{1}), (\iyoung{1},\iyoung{1}))
     \big).
\end{array}
\]
On the other hand, if one chooses $\pi^1 = \iyng{2}$ and $\pi^2 = \iyng{1,1}$ instead, then the evident bijection is exactly the one given in \cite{O90}.
\end{eg}
\section{Cell Modules}
In this section, we consider two types of cell modules separately.
The first type are the cell modules $W(\bbla)$ where $\bbla = e_\la^\nu$ for some $\la \vdash m$, $\nu \vdash 2$.
The second type corresponds to $\bbla = [\la, \la']$ for some $\la \neq \la'$ in $\Pi_m$. We show that these cell modules are isomorphic to the Specht modules defined by Hu and further compute their canonical bilinear forms.
\subsection{Wreath Modules}\label{sec:wreathmod}
We will show that these cell modules are isomorphic to the Specht module for $\cala(m)$ introduced by Hu in \cite{hu02}, which we paraphrase as follows.
\begin{defn}[Specht modules]
    For each $\bbla \in \Omega = \{ e_\la^\nu ~|~ \la \vdash m, \ \nu\vdash 2\} \sqcup \{[\la, \la'] ~|~ \la, \la' \vdash m, \ \la \neq \la'\}$,
    denote its corresponding {\em Specht module} by
    \[
    S^\bbla := \begin{cases}
\Span_{K}\{\sqrt{f_{\la}}(x\otimes y)+(x\otimes y)H_1\mid x\in S^{\la},y\in S^{\la}\}    
&\textup{if }\bbla = e_\la^{\iyng{2}};
\\
\Span_{K}\{\sqrt{f_{\la}}(x\otimes y)-(x\otimes y)H_1\mid x\in S^{\la},y\in S^{\la}\}    
&\textup{if }\bbla = e_\la^{\iyng{1,1}};
\\
\Ind^\cala_{B\otimes B}(S^{\la}\otimes S^{\la'})
&\textup{if }\bbla = [\la, \la'].
\end{cases}
    \]
\end{defn}
It is convenient to describe these Specht modules using the language of the \emph{wreath modules} \cite{LNXII}. 
Below we summarize the construction related to the Hu algebras, following \cite[Section 6.6]{LNXII}.

Suppose that $M \in \cH_q(\Sigma_m)$-mod such that $z_m$ acts on $M\otimes M$ by a scalar $f_\la \in K$, $N$ is a module over the generic Hecke algebra $\cH_{(0,f_\la)}(\Sigma_d)$, then, 
$M \wr N := M^{\otimes d} \otimes N$
affords a $B\wr \cH(d)$-module structure given by
\[
b \cdot(m\otimes n) = (b \cdot m)\otimes n,
\quad
H_i \cdot (m\otimes n) = m\otimes (T_i \cdot n)
\]
for $b \in B^{\otimes d}$, $m\in M^{\otimes d}$, $n \in N$, and $w \in \Sigma_d$.
Therefore, $S^{e_\la^\nu} \cong S^\la \wr S^\nu_{(0,f_\la)}$, where $S^\nu_{(0,f_\la)}$ is the Specht module over $\cH_{(0,f_\la)}(\Sigma_d)$.
\begin{thm}\label{thm:CellIsSpecht}
For the Hu algebras, the Specht modules coincide with the cell modules arising from the cell datum in \cref{thm:cellularity}. That is,
\[
W(\bbla) \cong S^\bbla =\begin{cases}
    S^\la \wr S^\nu_{(0, f_\la)} 
    &\textup{if }\bbla = e_\la^\nu;
    \\
    \Ind_{B\otimes B}^{\cala(m)}(S^\la \otimes S^{\la'})
    &\textup{if }\bbla = [\la, \la'].
\end{cases}
\]
\end{thm}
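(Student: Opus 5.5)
We prove the statement by writing down, for each $\bbla\in\Omega$, an explicit $K$-linear bijection between the cell module basis $\{C^\bbla_\bbT\mid \bbT\in M(\bbla)\}$ and a basis of $S^\bbla$, and checking that it intertwines the actions of the generators $b_1\otimes b_2\in B\otimes B$ and $H_1$ of $\cala(m)$. The action of $W(\bbla)$ on these generators is already computed in the proof of \cref{thm:cellularity}. By the C3) computation, $b_1\otimes b_2$ acts on $C^\bbla_{\bbT}$ with coefficients $r_{b_1}(\tabt^1_\bbS,\tabt^1_\bbT)\,r_{b_2}(\tabt^2_\bbS,\tabt^2_\bbT)$. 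The action of $H_1$ is given by \eqref{eq:H1a} and \eqref{eq:H1b}. So the proof reduces to matching these structure constants with the corresponding ones on $S^\bbla$.

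\emph{Case $\bbla=e^\nu_\la$.} Here $M(\bbla)\cong\std(\la)^2$. We send $C^\bbla_{(e,\tabs_R^\nu,(x_1,x_2))}$ to $(C^\la_{x_1}\otimes C^\la_{x_2})\otimes v$, where $v$ spans the one-dimensional module $S^\nu_{(0,f_\la)}$ and $C^\la_{x}$ is the cell (Specht) basis of $S^\la\cong W(\la)$ from the type $A$ cellular structure. By \eqref{eq:zeta}, $T_1$ acts on $v$ by $\pm\sqrt{f_\la}$, with the sign $+$ for $\nu=(2)$ and $-$ for $\nu=(1,1)$. In $S^\la\wr S^\nu_{(0,f_\la)}$, however, $H_1$ acts only on the $N$-factor. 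To make the $H_1$-action on the cell module, which swaps $(x_1,x_2)\mapsto(x_2,x_1)$ with scalar $\pm\sqrt{f_\la}$, agree with the wreath action, we need the wreath-module convention of \cite{LNXII}, where $H_i$ also applies the flip $\sigma$ on $M^{\otimes d}$. I expect the intended action is $H_1(m_1\otimes m_2\otimes n)=(m_2\otimes m_1)\otimes T_1n$, which is consistent with the relation $H_1b=\sigma(b)H_1$.

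Under that convention the check is immediate. The $B\otimes B$-actions agree factorwise. The $H_1$-action sends $(C^\la_{x_1}\otimes C^\la_{x_2})\otimes v$ to $\pm\sqrt{f_\la}\,(C^\la_{x_2}\otimes C^\la_{x_1})\otimes v$, which is exactly \eqref{eq:H1a}. We also need the relation $H_1^2=z_m$ to hold on $S^\la\otimes S^\la$ via \cref{cor:zmaction}, and it does since $(\pm\sqrt{f_\la})^2=f_\la$. For agreement with Hu's spanning description $\{\sqrt{f_\la}(x\otimes y)\pm(x\otimes y)H_1\}$, we note that $h^*_m$ acts invertibly. The map $x\otimes y\mapsto\sqrt{f_\la}(x\otimes y)\pm(x\otimes y)H_1$ is then injective, so the module has dimension $(\dim S^\la)^2$, and a direct computation identifies it with the wreath module.

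\emph{Case $\bbla=[\la,\la']$ with $\la\neq\la'$.} We assume $\la=\pi^i$ and $\la'=\pi^j$ with $i<j$. By the PBW basis, $\Ind^{\cala}_{B\otimes B}(S^\la\otimes S^{\la'})=\cala\otimes_{B\otimes B}(S^\la\otimes S^{\la'})$ is free over $K$ with basis $\{H_w\otimes(C^\la_{x_1}\otimes C^{\la'}_{x_2})\mid w\in\Sigma_2\}$. We define the map by $C^\bbla_{(w,\tabs,(x_1,x_2))}\mapsto H_{w^{-1}}\otimes(C^\la_{x_1}\otimes C^{\la'}_{x_2})$. The element $H_1$ sends the $w=e$ vector to the $w=s_1$ vector. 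It sends the $w=s_1$ vector to $H_1^2\otimes(\cdots)=z_m\otimes(\cdots)=f_{\la,\la'}(\cdots)$ by \cref{cor:zmaction}. This matches \eqref{eq:H1b}.

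For the action of $b=b_1\otimes b_2$, there is nothing to check on the $w=e$ vectors. On the $w=s_1$ vectors we use $bH_1=H_1\sigma(b)$, which follows from \cref{prop}(b). This says that $b_1\otimes b_2$ acts through $b_2$ on the $\la$-factor and $b_1$ on the $\la'$-factor. We must then check that this agrees with the $r_b$ attached to $C^\bbla_{(s_1,\ldots)}=H_1(C^{\pi^j}\otimes C^{\pi^i})\cdots$ in \cref{lem:d=2}. In that lemma the tensor factors are ordered according to $w_\bbT$, so the tableaux $\tabt_\bbT$ must be read in the swapped order, and the map should be adjusted accordingly.

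\emph{Main obstacle.} I expect the main difficulty to be bookkeeping: fixing the wreath-module convention and the ordering of the tensor factors when $w_\bbT=s_1$, so that the $H_1$-actions and the $B\otimes B$-actions match simultaneously. Once the generators are matched, equivariance for all of $\cala(m)$ follows because $B\otimes B$ and $H_1$ generate $\cala(m)$.
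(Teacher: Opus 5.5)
Your argument is the same as the paper's. You send $C^\bbla_\bbT$ to $C^\la_{\tabt^1_\bbT}\otimes C^\la_{\tabt^2_\bbT}\otimes v$ in the first case and to $H_{w_\bbT}\otimes(C^\la_{\tabt^1_\bbT}\otimes C^{\la'}_{\tabt^2_\bbT})$ in the second. You then match the structure constants $r_{b_1\otimes b_2}$ and $r_{H_1}$ from the proof of \cref{thm:cellularity} with the actions on $S^\bbla$. Letting $H_1$ act on $M^{\otimes 2}\otimes N$ by the flip tensored with $T_1$ is correct, and in fact necessary. The formula $H_i\cdot(m\otimes n)=m\otimes(T_i\cdot n)$, as literally written in \cref{sec:wreathmod}, is not compatible with $H_1b=\sigma(b)H_1$ in general. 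Moreover, \eqref{eq:H1a}, which swaps $(x_1,x_2)\mapsto(x_2,x_1)$, only matches the flipped action.

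The one loose end is your proposed ``adjustment'' of the map for $\bbla=[\la,\la']$; it comes from a misreading and is not needed. Write $x_i=\tabt^i_\bbT$ and $y_i=\tabt^i_{\bbT'}$. Then \cref{lem:d=2} gives $C^\bbla_{\bbT,\bbT'}=H_{w_\bbT^{-1}}\bigl(C^{\pi^i}_{x_1,y_1}\otimes C^{\pi^j}_{x_2,y_2}\bigr)H_{w_{\bbT'}}$. The expression $(C^{\pi^j}_{x_2,y_2}\otimes C^{\pi^i}_{x_1,y_1})H_1$ in the verification of C2) is the same element rewritten via the intertwining property; it is not $H_1(C^{\pi^j}\otimes C^{\pi^i})$. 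Hence for $w_\bbT=s_1$ one has $(b_1\otimes b_2)C^\bbla_{\bbT,\bbT'}=H_1\bigl(b_2C^{\pi^i}_{x_1,y_1}\otimes b_1C^{\pi^j}_{x_2,y_2}\bigr)H_{w_{\bbT'}}$. So the structure constants on these vectors are $r_{b_2}(\tabt^1_\bbS,\tabt^1_\bbT)\,r_{b_1}(\tabt^2_\bbS,\tabt^2_\bbT)$. This is exactly the induced-module action $b\cdot(H_1\otimes v)=H_1\otimes\sigma(b)v$.

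Therefore the unmodified map $C_{(w,\tabs,(x_1,x_2))}\mapsto H_w\otimes(C^\la_{x_1}\otimes C^{\la'}_{x_2})$ is already $B\otimes B$-equivariant. Together with your $H_1$ check against \eqref{eq:H1b}, this finishes the second case. Your instinct that something is swapped is right, but the swap lives in the structure constants. The paper's blanket formula $r_{b_1\otimes b_2}(\bbS,\bbT)=r_{b_1}(\tabt^1_\bbS,\tabt^1_\bbT)\,r_{b_2}(\tabt^2_\bbS,\tabt^2_\bbT)$ holds only on the vectors with $w_\bbT=e$.
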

\begin{proof}
When $\bbla = [\la, \la']$, the induced module $S^\bbla = \mathcal{A}(m) \otimes_{B\otimes B} (S_m^\lambda \otimes S_m^{\lambda'})$ 
 has a natural $K$-basis 
 \[
 \{ H_w \otimes (C^\la_{\tabt_1} \otimes C^{\la'}_{\tabt_2}) ~|~ w\in \Sigma_2, \ 
 \tabt_1 \in \std(\la), \ \tabt_2 \in \std(\la') \}.
 \]
We will show that the following assignment is an isomorphism:
\begin{equation}\label{eq:celliso}
W(\bbla) \to S^\bbla,
~ 
C_\bbT \mapsto
\begin{cases}
C^\la_{\tabt_\bbT^1} \otimes C^\la_{\tabt_\bbT^2} \otimes 1
&\textup{if }\bbla=e_\la^\nu, \bbT = (e, \tabs_R^\nu, (\tabt_\bbT^1, \tabt_\bbT^2));
\\
H_{w_\bbT} \otimes C^\la_{\tabt_\bbT^1} \otimes C^{\la'}_{\tabt_\bbT^2} 
&\textup{if }\bbla=[\la, \la'], \bbT = (w_\bbT, \tabs_R^\nu, (\tabt_\bbT^1, \tabt_\bbT^2)).
\end{cases}
\end{equation}
Recall that the $\cala(m)$-action on the cell module is given by, for $a \in \cala(m)$:
\begin{equation*}
    aC_\bbT^\bbla=\sum\nolimits_{\bbS\in M(\bbla)} r_a(\bbS,\bbT) C_{\bbT}^\bbla.
\end{equation*}
Since $r_{b_1\otimes b_2}(\bbS, \bbT) = r_{b_1}(\tabt_\bbS^{1}, \tabt_\bbT^{1}) r_{b_2}(\tabt_\bbS^{2}, \tabt_\bbT^{2})$ for all $b_i \in B$,
the $B\otimes B$-action on the cell module agree with the one on the Specht module.

On the other hand, the $H_1$-action on the cell modules are given by \eqref{eq:H1a} when $\bbla = e_\la^\nu$ and by \eqref{eq:H1b} when $\bbla = [\la, \la']$.
Indeed, they coincide with the $H_1$-action on the Specht modules. The theorem is proved.
\end{proof}
\subsection{Cell Modules, First Type}
 Since $B = \cH_q(\Sigma_m)$ is cellular, the corresponding canonical bilinear form $\phi_\la$ satisfies that
\begin{equation}
    C^\la_{\tabs_1, \tabs_2}
        C^\la_{\tabt_1, \tabt_2} 
        \equiv
            \phi_\la(C_{\tabs_2}^\la, C_{\tabt_1}^\la)C^\la_{\tabs_1, \tabt_2}
\pmod{B_{<\la}}
\end{equation}
for any $\tabs_i, \tabt_i \in \std(\la)$. 

Suppose that $\bbla = e_\la^\nu$ for some $\la \vdash m$, $\nu \vdash 2$. By \cref{lem:d=2}, an arbitrary cellular basis element is of the form
\[
C^\bbla_{\bbT, \bbT'} 
= 
C^{\la}_{\tabt_\bbT^{1},\tabt_{\bbT'}^{1}} \otimes C^{\la}_{\tabt_\bbT^{2},\tabt_{\bbT'}^{2}}
\pm
\sqrt{f_\la}^{-1}
\big(
C^{\la}_{\tabt_\bbT^{1},\tabt_{\bbT'}^{2}} \otimes C^{\la}_{\tabt_\bbT^{2},\tabt_{\bbT'}^{1}}
\big) 
H_{1}.
\]
\begin{prop}\label{prop:phibbla1}
Suppose that $\bbla = e_\la^\nu$ for some $\la \vdash m$, $\nu \vdash 2$.
Then,
\[
    C^\bbla_{\bbT, \bbT}    C^\bbla_{\bbT', \bbT'}
    \equiv
    2 \phi_\la(C^\la_{\tabt_{\bbT}^1}, C^\la_{\tabt_{\bbT'}^1})
    \phi_\la(C^\la_{\tabt_{\bbT}^2}, C^\la_{\tabt_{\bbT'}^2}) C^\bbla_{\bbT, \bbT'}
    \pmod{\cala_{<\bbla}}.
\]
As a result, $\phi_\bbla(C^\bbla_{\bbT},    C^\bbla_{\bbT'}) =     2 \phi_\la(C^\la_{\tabt_{\bbT}^1}, C^\la_{\tabt_{\bbT'}^1})
    \phi_\la(C^\la_{\tabt_{\bbT}^2}, C^\la_{\tabt_{\bbT'}^2})$.
\end{prop}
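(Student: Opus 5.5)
The plan is to expand the product $C^\bbla_{\bbT,\bbT}C^\bbla_{\bbT',\bbT'}$ directly using the explicit form from \cref{lem:d=2}, and then reduce each of the four resulting terms with the canonical form $\phi_\la$ of $B=\cH_q(\Sigma_m)$. Throughout, write $x_i:=\tabt_\bbT^i$, $y_i:=\tabt_{\bbT'}^i$, $c:=\sqrt{f_\la}$, and let $\epsilon=\pm1$ according as $\nu=(2)$ or $(1,1)$. Set
\[
A:=C^\la_{x_1,x_1}\otimes C^\la_{x_2,x_2},\quad A':=C^\la_{x_1,x_2}\otimes C^\la_{x_2,x_1},\quad
B_0:=C^\la_{y_1,y_1}\otimes C^\la_{y_2,y_2},\quad B_0':=C^\la_{y_1,y_2}\otimes C^\la_{y_2,y_1}.
\]
With this notation $C^\bbla_{\bbT,\bbT}=A+\epsilon c^{-1}A'H_1$ and $C^\bbla_{\bbT',\bbT'}=B_0+\epsilon c^{-1}B_0'H_1$.

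First I will move every $H_1$ to the right. This uses the wreath relation $H_1(u\otimes v)=(v\otimes u)H_1$ and the quadratic relation $H_1^2=z_m$. The product becomes
\[
AB_0+\epsilon c^{-1}AB_0'H_1+\epsilon c^{-1}A'(C^\la_{y_2,y_2}\otimes C^\la_{y_1,y_1})H_1+c^{-2}A'(C^\la_{y_2,y_1}\otimes C^\la_{y_1,y_2})z_m .
\]
Next I reduce each tensor product componentwise with $C^\la_{s_1,s_2}C^\la_{t_1,t_2}\equiv\phi_\la(C_{s_2},C_{t_1})C^\la_{s_1,t_2}\pmod{B_{<\la}}$. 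Abbreviate $\Phi:=\phi_\la(C^\la_{x_1},C^\la_{y_1})\,\phi_\la(C^\la_{x_2},C^\la_{y_2})$. Then:
\begin{itemize}
\item $AB_0\equiv \Phi\,C^\la_{x_1,y_1}\otimes C^\la_{x_2,y_2}$;
\item $AB_0'\equiv\Phi\,C^\la_{x_1,y_2}\otimes C^\la_{x_2,y_1}$;
\item $A'(C^\la_{y_2,y_2}\otimes C^\la_{y_1,y_1})\equiv\Phi\,C^\la_{x_1,y_2}\otimes C^\la_{x_2,y_1}$;
\item $A'(C^\la_{y_2,y_1}\otimes C^\la_{y_1,y_2})\equiv\Phi\,C^\la_{x_1,y_1}\otimes C^\la_{x_2,y_2}$.
\end{itemize}
In the last term, $z_m$ is central in $B\otimes B$ and acts on $C^\la_{\bullet,\bullet}\otimes C^\la_{\bullet,\bullet}$ by $f_\la=c^2$ by \cref{cor:zmaction}. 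So the factor $c^{-2}f_\la$ equals $1$. Summing the four terms gives
\[
2\Phi\bigl(C^\la_{x_1,y_1}\otimes C^\la_{x_2,y_2}+\epsilon c^{-1}(C^\la_{x_1,y_2}\otimes C^\la_{x_2,y_1})H_1\bigr)=2\Phi\, C^\bbla_{\bbT,\bbT'} .
\]
The formula for $\phi_\bbla$ then follows by comparing with \eqref{eq:phi_lambda} in the case $\bbS_1=\bbS_2=\bbT$, $\bbT_1=\bbT_2=\bbT'$.

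The main obstacle is justifying that every discarded error term lies in $\cala_{<\bbla}$. These errors are not only of the form $B_{<\la}\otimes B_{<\la}$, which is all the proof of \cref{thm:cellularity} explicitly treated. They also include mixed terms in $B_{<\la}\otimes B+B\otimes B_{<\la}$, possibly multiplied on the right by $H_1$ or by $z_m$.

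I will handle this in two steps.
\begin{itemize}
\item The subspace $J:=(B_{<\la}\otimes B+B\otimes B_{<\la})(1+KH_1)$ is a two-sided ideal, stable under $z_m$. This uses the intertwining property of $H_1$ and the fact that $B_{<\la}$ is an ideal.
\item $J$ is spanned by basis elements $C^{\bbmu}_{\bullet,\bullet}$ with $\bbmu<e_\la^\nu$ in \cref{def:domPO}. These are $\bbmu=e^{\nu'}_{\mu}$ or $[\mu,\mu']$ with $\mu,\mu'$ not both equal to $\la$ and each $\le\la$, together with $[\mu,\la]$ for $\mu<\la$. By \cref{lem:d=2}, these elements span $J$, using $\ch K\neq 2$ to separate the $e^{(2)}_\mu$ and $e^{(1,1)}_\mu$ pieces.
\end{itemize}
The delicate point is checking from \cref{def:domPO} that $[\mu,\la]<e_\la^\nu$ when $\mu<\la$. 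I would verify this first by comparing the partial sums at $\nu=\la$ and at $\nu=\mu$.
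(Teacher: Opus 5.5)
Your four-term expansion is exactly the paper's computation and is correct. You push $H_1$ to the right via $H_1(u\otimes v)=(v\otimes u)H_1$, use $H_1^2=z_m$ acting by $f_\la=\sqrt{f_\la}^{\,2}$, and reduce each tensor factor with $\phi_\la$. You are also right that the discarded terms are mixed: they involve one factor in $B_{<\la}$ and one in $B_{\le\la}$, not just $B_{<\la}\otimes B_{<\la}$, which is all the paper's proof invokes.

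However, the lemma you propose for handling them is false as stated. Your $J=(B_{<\la}\otimes B+B\otimes B_{<\la})(1+KH_1)$ is \emph{not} contained in $\cA_{<\bbla}$. It contains $C^{\mu}_{a,b}\otimes C^{\mu'}_{c,d}$ with $\mu<\la$ and $\mu'$ arbitrary. For $\mu'\neq\mu$ this is, up to the nonzero scalar $f_{\mu,\mu'}$ produced by $H_1(\,\cdot\,)H_1$, a basis element of type $[\mu,\mu']$. If $\mu'\not\le\la$, then $[\mu,\mu']\not<e_\la^\nu$: with the order discussed below, its partial sum at index $\mu'$ is at least $1$, while that of $e_\la^\nu$ is $0$. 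A concrete case is $m=3$, $\la=(2,1)$, $\mu=(1,1,1)$, $\mu'=(3)$. The list of $\bbmu$ you give spans only
\[
J':=\bigl(B_{<\la}\otimes B_{\le\la}+B_{\le\la}\otimes B_{<\la}\bigr)(1+KH_1).
\]
The fix is to work with $J'$ throughout.
\begin{itemize}
\item $B_{\le\la}$ is a two-sided ideal, being a down-set of the cellular poset. So each product $C^\la_{\bullet,\bullet}C^\la_{\bullet,\bullet}$ equals $\phi_\la(\cdot,\cdot)C^\la_{\bullet,\bullet}$ plus an element of $B_{<\la}$, with both pieces in $B_{\le\la}$.
\item Hence every error term in your expansion lies in $I':=B_{<\la}\otimes B_{\le\la}+B_{\le\la}\otimes B_{<\la}$, possibly multiplied on the right by $H_1$ or $z_m$.
\item $I'$ is a swap-stable ideal of $B\otimes B$, so $J'=I'+I'H_1$ absorbs $z_m$.
\item Your spanning argument then gives $J'\subseteq\cA_{<\bbla}$, using $\ch K\neq 2$ for the $e_\mu^{(2)}$, $e_\mu^{(1,1)}$ pieces and $f_{\mu,\mu'}\neq 0$.
\end{itemize}

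On your ``delicate point'': with \cref{def:domPO} exactly as printed (summing over $\gamma<\kappa$ at index $\kappa$), the check fails. At index $\la$ the first partial sums are $2$ for $[\mu,\la]$ and $\nu_1$ for $e_\la^\nu$, which breaks for $\nu=(1,1)$. The printed definition also does not reproduce the paper's own $m=d=2$ example following \cref{def:domPO}. You need the order that example exhibits, for instance counting at index $\kappa$ the boxes at partitions $\gamma>\kappa$ instead of $\gamma<\kappa$. With that order the partial sums of $e_\la^\nu$ are $2$ at indices below $\la$, $\nu_1+\dots+\nu_k\ge 1$ at $\la$, and $0$ elsewhere. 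Any $\bbmu\neq e_\la^{\nu'}$ supported on $\{\gamma\le\la\}$ has at most one copy of $\la$, so it satisfies all the inequalities. Thus every $\bbmu$ occurring in $J'$ is $<e_\la^\nu$. With these two corrections your argument is complete, and its treatment of the lower-order terms is more careful than the paper's.
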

\begin{proof}
Within this proof, Write $x_i := \tabt_\bbT^{(i)}, y_i := \tabt_{\bbT'}^{(i)}$ for brevity. 
Recall that both $B_{<\lambda}\otimes B_{<\lambda}$ and $(B_{<\lambda}\otimes B_{<\lambda})H_1$ lie in $\cA_{<\bbla}$ thanks to \cref{def:domPO}.
Hence, it follows from \cref{cor:zmaction} that
\begin{equation}
\begin{split}
    C^\bbla_{\bbT, \bbT}    C^\bbla_{\bbT', \bbT'}
    &=
C^{\la}_{x_1,x_1}C^{\la}_{y_1,y_1} \otimes C^{\la}_{x_2,x_2}C^{\la}_{y_2,y_2}
+
\sqrt{f_\la}^{-2}
(
C^{\la}_{x_1,x_2}C^{\la}_{y_2,y_1} \otimes C^{\la}_{x_2,x_1}C^{\la}_{y_1,y_2}
) 
H_{1}^2
\\
&+
\sqrt{f_\la}^{-1}
(
C^{\la}_{x_1,x_2}C^{\la}_{y_2,y_2} \otimes C^{\la}_{x_2,x_1}C^{\la}_{y_1,y_1}
+
C^{\la}_{x_1,x_1}C^{\la}_{y_1,y_2} \otimes C^{\la}_{x_2,x_2}C^{\la}_{y_2,y_1}
) 
H_1
\\
&\equiv
\phi_\la(C^\la_{x_1},C^\la_{y_1}) C^{\la}_{x_1,y_1} \otimes \phi_\la(C^\la_{x_2},C^\la_{y_2}) C^{\la}_{x_2,y_2}
\\
&\quad+
\phi_\la(C^\la_{x_2},C^\la_{y_2})C^{\la}_{x_1,y_1} \otimes \phi_\la(C^\la_{x_1},C^\la_{y_1})C^{\la}_{x_2,y_2} 
\\
&\quad \pm
\sqrt{f_\la}^{-1}
(\phi_\la(C^\la_{x_2},C^\la_{y_2}) C^{\la}_{x_1,y_2} \otimes \phi_\la(C^\la_{x_1},C^\la_{y_1})C^{\la}_{x_2,y_1})H_1
\\
&\quad \pm
\sqrt{f_\la}^{-1}
(\phi_\la(C^\la_{x_1},C^\la_{y_1})C^{\la}_{x_1,y_2} \otimes \phi_\la(C^\la_{x_2},C^\la_{y_2})C^{\la}_{x_2,y_1})H_1
\\
&\equiv 2 \phi_\la(C^\la_{x_1}, C^\la_{y_1})
    \phi_\la(C^\la_{x_2}, C^\la_{y_2}) C^\bbla_{\bbT, \bbT'}
    \pmod{\cala_{<\bbla}}.
\end{split}    
\end{equation}
The second assertion of the lemma follows from the definition of the canonical bilinear form.
\end{proof}
\begin{cor}\label{rad1}
    Suppose that $\bbla = e_\la^\nu$ for some $\la \vdash m$, $\nu \vdash 2$.
Then,
\[
\rad(\bbla) \cong  \left( \rad(\phi_\lambda) \otimes S^\lambda \right) \oplus \left( S^\lambda \otimes \rad(\phi_\lambda) \right).
\]
\end{cor}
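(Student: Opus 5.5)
The plan is to reduce the corollary to linear algebra on $S^\la\otimes S^\la$, using the formula for $\phi_\bbla$ in \cref{prop:phibbla1}.

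\emph{Step 1: identify $W(\bbla)$ with $S^\la\otimes S^\la$.} By \cref{eg:d=2'}(b), $M(e_\la^\nu)\cong\std(\la)^2$ via $\bbT\mapsto(\tabt_\bbT^1,\tabt_\bbT^2)$. The assignment $C^\bbla_\bbT\mapsto C^\la_{\tabt_\bbT^1}\otimes C^\la_{\tabt_\bbT^2}$ is therefore a $K$-linear isomorphism $W(\bbla)\cong S^\la\otimes S^\la$. This is the map \eqref{eq:celliso} from \cref{thm:CellIsSpecht}, with the trivial factor forgotten.

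\emph{Step 2: transport the form.} By \cref{prop:phibbla1},
\[
\phi_\bbla(C^\bbla_\bbT,C^\bbla_{\bbT'}) = 2\,\phi_\la(C^\la_{\tabt_\bbT^1},C^\la_{\tabt_{\bbT'}^1})\,\phi_\la(C^\la_{\tabt_\bbT^2},C^\la_{\tabt_{\bbT'}^2}).
\]
The proposition computes this only from the product $C^\bbla_{\bbT,\bbT}C^\bbla_{\bbT',\bbT'}$. This suffices, because by \eqref{eq:phi_lambda} the value $\phi_\bbla(C_{\bbS_2},C_{\bbT_1})$ is read off from $C^\bbla_{\bbS_1,\bbS_2}C^\bbla_{\bbT_1,\bbT_2}$ for any choice of $\bbS_1,\bbT_2$. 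So the formula holds on all basis pairs.

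By bilinearity, under the identification of Step 1 we get $\phi_\bbla = 2\,(\phi_\la\otimes\phi_\la)$, where $\phi_\la\otimes\phi_\la$ is the tensor-product bilinear form. Since $\ch K\neq 2$, the factor $2$ is a unit, so $\rad(\bbla)$ equals the radical of $\phi_\la\otimes\phi_\la$.

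\emph{Step 3: radical of a tensor-product form.} This is the main step.
\begin{itemize}
\item Choose a complement $U$ with $S^\la = U\oplus\rad(\phi_\la)$. Since $\rad(\phi_\la)$ is the full radical, the restriction of $\phi_\la$ to $U$ is nondegenerate. This uses that $\phi_\la$ is symmetric, which holds for the canonical form of a cellular algebra, so left and right radicals agree.
\item Decompose
\[
S^\la\otimes S^\la=(U\otimes U)\oplus(\rad\otimes U)\oplus(U\otimes\rad)\oplus(\rad\otimes\rad),
\]
writing $\rad:=\rad(\phi_\la)$.
\item The tensor form is nondegenerate on $U\otimes U$: take a basis of $U$ and its $\phi_\la$-dual basis; the resulting Gram matrix is the Kronecker product of two invertible matrices.
\item The tensor form vanishes whenever either argument lies in one of the last three summands. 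It also pairs $U\otimes U$ only with itself.
\end{itemize}
Hence the radical is exactly
\[
(\rad\otimes U)\oplus(U\otimes\rad)\oplus(\rad\otimes\rad)=\rad(\phi_\la)\otimes S^\la+S^\la\otimes\rad(\phi_\la).
\]

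I expect the main obstacle to be interpreting the ``$\oplus$'' in the statement, since the two summands intersect in $\rad(\phi_\la)\otimes\rad(\phi_\la)$. The cleanest reading is as the sum above, which equals $(\rad\otimes U)\oplus(U\otimes\rad)\oplus(\rad\otimes\rad)$. When $\phi_\la$ is nondegenerate or zero the statement degenerates correctly. I would state explicitly that the sum is direct modulo the overlap $\rad(\phi_\la)\otimes\rad(\phi_\la)$.
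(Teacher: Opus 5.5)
Your proof is correct. It rests on the same two inputs as the paper's proof: the factorization of $\phi_\bbla$ from \cref{prop:phibbla1}, and the assumption $\ch K\neq 2$. Your final step, however, is genuinely different, and it is the one that actually proves the statement.

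The paper writes $\rad(\bbla)$ as the set of basis vectors $C^\bbla_\bbT$ whose pairings with all $C^\bbla_{\bbT'}$ vanish. It then reads off from the factorization which \emph{basis vectors} are radical. This determines $\rad(\bbla)$ only when $\rad(\phi_\la)$ is spanned by a subset of the Murphy basis of $S^\la$, which fails in general. For instance, take $\la=(2,1)$ and $q$ a primitive cube root of unity, which the standing hypotheses allow. The Gram matrix of $\phi_\la$ on $C^\la_{\tabt_1},C^\la_{\tabt_2}$ (with $d_{\tabt_1}=e$, $d_{\tabt_2}=s_2$) is $\left(\begin{smallmatrix}1+q&-1\\-1&1+q^2\end{smallmatrix}\right)$. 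Its kernel is spanned by $C^\la_{\tabt_1}+(1+q)C^\la_{\tabt_2}$. So no basis vector of $W(\bbla)$ is radical, yet $\rad(\bbla)$ is $3$-dimensional.

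Your route identifies $\phi_\bbla$ with $2(\phi_\la\otimes\phi_\la)$ and then uses the complement argument. This computes the radical as a subspace, which is what is needed. An alternative is to take $\Phi\colon S^\la\to(S^\la)^*$, $x\mapsto\phi_\la(x,-)$, and use $\ker(\Phi\otimes\Phi)=\ker\Phi\otimes S^\la+S^\la\otimes\ker\Phi$; this does not even need symmetry of $\phi_\la$.

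Your reading of ``$\oplus$'' is also the only correct one. Let $r=\dim\rad(\phi_\la)$ and $s=\dim S^\la$. The radical has dimension $2rs-r^2$, whereas an external direct sum would have dimension $2rs$. So the statement holds as the internal sum $\rad(\phi_\la)\otimes S^\la+S^\la\otimes\rad(\phi_\la)$, and is literally false as a direct sum unless $\rad(\phi_\la)=0$.
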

\begin{proof}
    One has $\rad(\bbla) = \{C^\bbla_\bbT ~|~ \phi_\bbla(C^\bbla_\bbT, C^\bbla_{\bbT'}) = 0 \textup{ for all }\bbT' \in M(\bbla)\}$.
    Since $\ch K \neq 2$, it follows from  \cref{prop:phibbla1} that $ \phi_\bbla(C^\bbla_\bbT, C^\bbla_{\bbT'}) = 0$ if and only if either $C^\la_{\tabt_\bbT^1} \in \rad(\phi_\la)$ or $C^\la_{\tabt_\bbT^2} \in \rad(\phi_\la)$. 
    The corollary is proved. 
\end{proof}
\subsection{Cell Modules, Second Type}
Next, suppose $\bbla = [\la, \la']$ for some $\la \neq \la'$ in $\Pi_m$. 
By \cref{lem:d=2}, an arbitrary cellular basis element is of the form
\[
C^\bbla_{\bbT, \bbT'} 
= 
H_{w_{\bbT}^{-1}}
\big(
C^{\la}_{\tabt_\bbT^{1},\tabt_{\bbT'}^{1}} \otimes C^{\la'}_{\tabt_\bbT^{2},\tabt_{\bbT'}^{2}}
\big) 
H_{w_{\bbT'}}.
\]
\begin{prop}\label{prop:phibbla2}
Suppose that $\bbla = [\la, \la']$ for some $\la \neq \la'$ in $\Pi_m$. 
Then,
\[
    C^\bbla_{\bbT, \bbT}    C^\bbla_{\bbT', \bbT'}
    \equiv
    \begin{cases}
    \phi_\la(C^\la_{\tabt_{\bbT}^1}, C^\la_{\tabt_{\bbT'}^1})
    \phi_{\la'}(C^{\la'}_{\tabt_{\bbT}^2}, C^{\la'}_{\tabt_{\bbT'}^2}) 
    C^\bbla_{\bbT, \bbT'}
    \pmod{\cala_{<\bbla}}
    &\textup{if }w_\bbT = w_{\bbT'} = e;
    \\
    f_{\la,\la'}
    \phi_\la(C^\la_{\tabt_{\bbT}^1}, C^\la_{\tabt_{\bbT'}^1})
    \phi_{\la'}(C^{\la'}_{\tabt_{\bbT}^2}, C^{\la'}_{\tabt_{\bbT'}^2})  
    C^\bbla_{\bbT, \bbT'}
    \pmod{\cala_{<\bbla}}
    &\textup{if }w_\bbT = w_{\bbT'} = s_1;
    \\
    0
    \pmod{\cala_{<\bbla}}
    &\textup{if }w_\bbT \neq w_{\bbT'}.
    \end{cases}
\]
As a result, the canonical bilinear form on $W([\la, \la'])$ is given by
\[
\phi_{[\la,\la']}(C^\bbla_{\bbT},    C^\bbla_{\bbT'})
    =
    \begin{cases}
    \phi_\la(C^\la_{\tabt_{\bbT}^1}, C^\la_{\tabt_{\bbT'}^1})
    \phi_{\la'}(C^{\la'}_{\tabt_{\bbT}^2}, C^{\la'}_{\tabt_{\bbT'}^2}) 
    &\textup{if }w_\bbT = w_{\bbT'} = e;
    \\
    f_{\la,\la'}
    \phi_\la(C^\la_{\tabt_{\bbT}^1}, C^\la_{\tabt_{\bbT'}^1})
    \phi_{\la'}(C^{\la'}_{\tabt_{\bbT}^2}, C^{\la'}_{\tabt_{\bbT'}^2}) 
    &\textup{if }w_\bbT = w_{\bbT'} = s_1;
    \\
    0
    &\textup{if }w_\bbT \neq w_{\bbT'}.
    \end{cases}
\]
\end{prop}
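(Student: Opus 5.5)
The plan is a direct computation with the explicit form from \cref{lem:d=2}, following the pattern of \cref{prop:phibbla1}. Write $x_i := \tabt_\bbT^{i}$ and $y_i := \tabt_{\bbT'}^{i}$. The product is
\[
C^\bbla_{\bbT,\bbT}C^\bbla_{\bbT',\bbT'} = H_{w_\bbT^{-1}}(C^\la_{x_1,x_1}\otimes C^{\la'}_{x_2,x_2})H_{w_\bbT}H_{w_{\bbT'}^{-1}}(C^\la_{y_1,y_1}\otimes C^{\la'}_{y_2,y_2})H_{w_{\bbT'}}.
\]
Everything hinges on the middle factor $H_{w_\bbT}H_{w_{\bbT'}^{-1}}$, which equals $1$, $H_1$, or $H_1^2=z_m$ depending on the pair $(w_\bbT,w_{\bbT'})$. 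I would split into the three cases of the statement.

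\emph{Case $w_\bbT=w_{\bbT'}=e$.} The product is
\[
C^\la_{x_1,x_1}C^\la_{y_1,y_1}\otimes C^{\la'}_{x_2,x_2}C^{\la'}_{y_2,y_2}.
\]
Cellularity of $B$ in each factor gives this modulo $B_{<\la}\otimes B+B\otimes B_{<\la'}$ as $\phi_\la(C_{x_1},C_{y_1})\phi_{\la'}(C_{x_2},C_{y_2})\,C^\la_{x_1,y_1}\otimes C^{\la'}_{x_2,y_2}$. The error terms lie in $\cA_{<\bbla}$ by the argument already used in the proof of \cref{thm:cellularity}: replacing a partition in the support by a smaller one lowers $\bbla$ in the order of \cref{def:domPO}. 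This also holds after multiplying by $H_1$ on either side, using the intertwining property of $H_1$.

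\emph{Case $w_\bbT=w_{\bbT'}=s_1$.} The middle factor is $H_1H_1=z_m$. By the intertwining property, $H_1(a\otimes b)H_1=(b\otimes a)H_1^2$. Since $z_m$ is central in $B\otimes B$, I can move it next to a tensor product of cellular basis elements of shape $(\la,\la')$, and \cref{cor:zmaction} turns it into the scalar $f_{\la,\la'}$. The remaining element is $H_1\,(\cdots)\,H_1$ with the first-case product inside, so the same reduction gives $f_{\la,\la'}\phi_\la\phi_{\la'}C^\bbla_{\bbT,\bbT'}$.

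There is one delicate point. $z_m$ acts by $f_{\la,\la'}$ only on the top cell; on the lower terms it acts by other scalars. Those terms nevertheless stay in $B_{<}\otimes B+B\otimes B_{<}$, since the lower ideals are two-sided. So I would apply $z_m$ before reducing modulo lower terms, or note that $z_m$ preserves the lower ideal.

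\emph{Case $w_\bbT\neq w_{\bbT'}$.} Say $w_\bbT=e$ and $w_{\bbT'}=s_1$; the other order is symmetric under $*$. The product is
\[
(C^\la_{x_1,x_1}\otimes C^{\la'}_{x_2,x_2})H_1(C^\la_{y_1,y_1}\otimes C^{\la'}_{y_2,y_2})H_1 = (C^\la_{x_1,x_1}C^{\la'}_{y_2,y_2}\otimes C^{\la'}_{x_2,x_2}C^\la_{y_1,y_1})\,z_m.
\]
The key observation is that $C^\la_{x_1,x_1}C^{\la'}_{y_2,y_2}$ multiplies cellular basis elements of different shapes $\la\neq\la'$. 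Such a product lies in the cell ideal $B^{\le\la}\cap B^{\le\la'}$, which is spanned by basis elements of shapes $\mu$ with $\mu\le\la$ and $\mu\le\la'$. Since $\la\ne\la'$, every such $\mu$ is strictly below at least one of $\la,\la'$.

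This is the step I expect to be the main obstacle. One must check, against \cref{def:domPO} with the chosen total enumeration of $\Pi_m$, that replacing $[\la,\la']$ by a support in which some entry is strictly smaller in dominance (possibly producing $e_\mu^\nu$ types, or swapping the roles of $\la$ and $\la'$ in position) really lands strictly below $\bbla$. For this I would argue that both resulting tensor factors have shapes dominated by $\la$ and by $\la'$, hence by $\min$ of the pair. I would verify the inequalities in \cref{def:domPO} directly, using that the cumulative counts $\sum_{\gamma<\nu}\#\bbla(\gamma)$ can only increase weakly for smaller supports while the top block disappears. Finally, the bilinear form statement follows immediately from the definition \eqref{eq:phi_lambda} of $\phi_\bbla$.
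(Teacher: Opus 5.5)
Your proposal is correct and follows essentially the paper's own route. You expand via \cref{lem:d=2}, use intertwining to reduce the middle factor to $1$, $H_1$ or $H_1^2=z_m$, apply \cref{cor:zmaction} to the surviving top term, and kill the case $w_\bbT\neq w_{\bbT'}$ because products of cellular basis elements of distinct shapes $\la\neq\la'$ fall into a lower cell ideal; your $B^{\le\la}\cap B^{\le\la'}$ observation is the same fact the paper takes from \cite[Lemma 2.2(iii)]{GL96}. The step you flag is only asserted in the paper as well, and two cautions apply when checking it:
\begin{enumerate}
\item Check against the order displayed in the example after \cref{def:domPO}, where moving a factor to a smaller partition lowers $\bbla$. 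With that inequality read literally, your ``cumulative counts increase'' heuristic would push the wrong way.
\item Say ``dominated by both $\la$ and $\la'$'' rather than ``by the min'', since $\la,\la'$ may be incomparable.
\end{enumerate}
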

\begin{proof}
Assume that $\la = \pi^i$, $\la' = \pi^j$ with $i < j$.
Suppose that $w_\bbT \neq w_{\bbT'}$.
We may assume that 
$\bbT = (s_1, \tabs_\bbT, (x_1, x_2))$ and $\bbT' = (e, \tabs_{\bbT'}, (y_1, y_2))$. Thus,
\begin{equation}
    C_{\bbT,\bbT}^\bbla C_{\bbT',\bbT'}^{\bbla'} 
    = H_1 (C^\la_{x_1,x_1} \otimes C^{\la'}_{x_2,x_2})H_1 (C^\la_{y_1,y_1} \otimes C^{\la'}_{y_2,y_2})
    = H_1^2 (C^{\la'}_{x_2,x_2} C^\la_{y_1,y_1} \otimes C^\la_{x_1,x_1} C^{\la'}_{y_2,y_2}).
\end{equation}
On one hand, it follows from {\cite[Lemma 2.2(iii)]{GL96}} that
$C^{\la'}_{x_2,x_2} C^\la_{y_1,y_1}
\in B_{<\la}$ unless $\la' \geq \la$.
By symmetry, $C^\la_{x_1,x_1} C^{\la'}_{y_2,y_2} \in B_{<\la'}$ unless $\la \geq \la'$.
Since $\la \neq \la'$ by the assumption, one of the inequalities must fail and hence
$(C^{\la'}_{x_2,x_2} C^\la_{y_1,y_1} \otimes C^\la_{x_1,x_1} C^{\la'}_{y_2,y_2}) \in B_{<\la}\otimes B_{<\la'} \subseteq \cA_{<\bbla}$.

Suppose that $w_\bbT = w_{\bbT'} = s_1$, and hence 
we may assume that 
$\bbT = (s_1, \tabs_\bbT, (x_1, x_2))$ and $\bbT' = (s_1, \tabs_{\bbT'}, (y_1, y_2))$. Thus,
\begin{equation}
\begin{split}
    C_{\bbT,\bbT}^\bbla C_{\bbT',\bbT'}^{\bbla} 
    &
    = H_1^2 (C^{\la'}_{x_2,x_2} C^{\la'}_{y_2,y_2} \otimes C^\la_{x_1,x_1} C^\la_{y_1,y_1}) H_1^2
    \\
    &=\phi_\la(C^\la_{x_1},C^\la_{y_1})\phi_{\la'}(C^{\la'}_{x_2},C^{\la'}_{y_2})z_m (C^{\la'}_{x_2,y_2} \otimes  C^\la_{x_1,y_1}) H_1^2
    \\
    &= \phi_\la(C^\la_{x_1},C^\la_{y_1})\phi_{\la'}(C^{\la'}_{x_2},C^{\la'}_{y_2}) f_{\la,\la'} H_1 (C^\la_{x_1,y_1}\otimes C^{\la'}_{x_2,y_2}) H_1
    \\
    &= \phi_\la(C^\la_{x_1},C^\la_{y_1})\phi_{\la'}(C^{\la'}_{x_2},C^{\la'}_{y_2}) f_{\la,\la'} C_{\bbT,\bbT'}^\bbla.
\end{split}    
\end{equation}
We omit the case when $w_\bbT = w_{\bbT'} = e$ as it is similar to the case when  $w_\bbT = w_{\bbT'} = s_1$. 
\end{proof}
\begin{cor}\label{rad2}
Suppose that $\bbla = [\la, \la']$ for some $\la \neq \la'$ in $\Pi_m$. 
Then,
\[
\rad(\bbla) \cong  \left( \rad(\phi_\lambda) \otimes S^{\lambda'} \right) \oplus \left( S^\lambda \otimes \rad(\phi_{\lambda'}) \right).
\]
\end{cor}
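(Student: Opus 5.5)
The plan is to mirror the argument for \cref{rad1}, replacing \cref{prop:phibbla1} by \cref{prop:phibbla2}. Write $\bbla = [\la,\la']$ with $\la = \pi^i$, $\la' = \pi^j$, $i<j$. Recall from \cref{eg:d=2'}(b) that $M(\bbla) \cong \Sigma_2 \times \std(\la)\times\std(\la')$. Hence $W(\bbla)$ has basis $\{C^\bbla_\bbT\}$ indexed by triples $(w_\bbT, \tabt^1_\bbT, \tabt^2_\bbT)$. Under the isomorphism \eqref{eq:celliso} of \cref{thm:CellIsSpecht}, $C^\bbla_\bbT$ corresponds to $H_{w_\bbT}\otimes C^\la_{\tabt^1_\bbT}\otimes C^{\la'}_{\tabt^2_\bbT}$.

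\textbf{Step 1: the form is block diagonal.} By \cref{prop:phibbla2}, $\phi_\bbla(C^\bbla_\bbT, C^\bbla_{\bbT'})=0$ whenever $w_\bbT\neq w_{\bbT'}$. So $\phi_\bbla$ is orthogonal with respect to the splitting of $W(\bbla)$ into the two $w$-blocks. It follows that $\rad(\bbla)$ is the direct sum of the radicals of the restrictions of $\phi_\bbla$ to these blocks.

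\textbf{Step 2: each block is a tensor product form.} On the $w=e$ block, \cref{prop:phibbla2} gives $\phi_\bbla = \phi_\la\otimes\phi_{\la'}$ on $S^\la\otimes S^{\la'}$. On the $w=s_1$ block it gives $f_{\la,\la'}\,(\phi_\la\otimes\phi_{\la'})$. Since $f_{\la,\la'}\in K^\times$ by the proposition recalled in Section~\ref{subs:hu}(c), the scalar does not change the radical. So on each block the radical is that of $\phi_\la\otimes\phi_{\la'}$.

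\textbf{Step 3: radical of a tensor product form.} This is the only real content, and it is standard linear algebra. Choose bases of $S^\la$ and $S^{\la'}$ adapted to $\rad(\phi_\la)$ and $\rad(\phi_{\la'})$, that is, complements on which the forms are nondegenerate. The Gram matrix of $\phi_\la\otimes\phi_{\la'}$ is the Kronecker product of the two Gram matrices. Therefore a vector pairs to zero with everything exactly when it lies in $\rad(\phi_\la)\otimes S^{\la'}+S^\la\otimes\rad(\phi_{\la'})$. Equivalently, in the language of the proof of \cref{rad1}, a basis element pairs trivially with all others exactly when one of its tensor factors lies in the corresponding radical.

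\textbf{Step 4: assemble.} Combining the two blocks identifies $\rad(\bbla)$ with the span of those $C^\bbla_\bbT$ for which $C^\la_{\tabt^1_\bbT}\in\rad(\phi_\la)$ or $C^{\la'}_{\tabt^2_\bbT}\in\rad(\phi_{\la'})$. I will present this in the form stated: the $\rad(\phi_\la)\otimes S^{\la'}$ summand and the $S^\la\otimes\rad(\phi_{\la'})$ summand, exactly as in \cref{rad1}.

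The step needing most care is this final bookkeeping, because the stated right-hand side must be matched with what Steps~1–3 actually give. Those steps produce one copy of $\rad(\phi_\la)\otimes S^{\la'}+S^\la\otimes\rad(\phi_{\la'})$ for each of $w=e$ and $w=s_1$, and within a block this is a sum, not in general a direct sum. The stated direct sum has dimension $\dim\rad(\phi_\la)\cdot\dim S^{\la'}+\dim S^\la\cdot\dim\rad(\phi_{\la'})$. I would therefore explicitly compare dimensions, or construct the map realizing the stated isomorphism, before writing the final identification, and adjust the precise form of the conclusion if they disagree. Everything else follows directly from \cref{prop:phibbla2} and the invertibility of $f_{\la,\la'}$.
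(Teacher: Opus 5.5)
Your Steps~1--3 take the same route as the paper: block-orthogonality and the tensor-product shape of $\phi_\bbla$ from \cref{prop:phibbla2}, plus $f_{\la,\la'}\in K^\times$. They are correct, and because you work with bases adapted to the radicals, they are more careful than the paper's one-line proof. What they prove is that $\rad(\bbla)$ consists of one copy of
\[
R:=\rad(\phi_\la)\otimes S^{\la'}+S^\la\otimes\rad(\phi_{\la'})
\]
in each of the two $w$-blocks. Under \cref{thm:CellIsSpecht} this says $\rad(\bbla)\cong\Ind_{B\otimes B}^{\cala(m)}R$.

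The gap is Step~4, and it is not a bookkeeping issue: the comparison you postponed fails. Write $n=\dim S^\la$, $r=\dim\rad(\phi_\la)$, $n'=\dim S^{\la'}$, $r'=\dim\rad(\phi_{\la'})$. Then $\dim\rad(\bbla)=2(rn'+nr'-rr')$, while the stated right-hand side has dimension $rn'+nr'$. The difference $r(n'-r')+r'(n-r)$ vanishes only when both forms are nondegenerate or both are identically zero.

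Concretely, take $K=\mathbb{C}$, $m=3$, $q$ a primitive cube root of unity (so $\ch K\neq 2$ and $f_6(q)\neq 0$), and $\bbla=[(2,1),(1,1,1)]$. Then $\dim S^{(2,1)}=2$ with $\phi_{(2,1)}$ of rank $1$, and $\phi_{(1,1,1)}$ is nondegenerate on the one-dimensional $S^{(1,1,1)}$. So $\dim W(\bbla)=4$, $\phi_\bbla$ has rank $2$, and $\dim\rad(\bbla)=2$, whereas the stated right-hand side is one-dimensional. The corollary as written is false in general, and no completion of Step~4 can establish it. The honest conclusion of your argument is $\rad(\bbla)\cong\Ind_{B\otimes B}^{\cala(m)}R$, which as a vector space is two copies of $R$.

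Separately, the sentence in Step~4 describing $\rad(\bbla)$ as the span of those cellular basis vectors $C^\bbla_\bbT$ with a tensor factor in the radical is wrong. In general $\rad(\phi_\la)$ is not spanned by Murphy basis vectors. In the example, the Gram matrix of $\phi_{(2,1)}$ is $\left(\begin{smallmatrix}1+q&-1\\-1&1+q^2\end{smallmatrix}\right)$, which is singular with no zero row. So that span is $0$, although $\rad(\bbla)$ is two-dimensional. The description is valid only for the adapted bases of your Step~3.

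The paper's proof relies on exactly this basis-vector description. It also writes the sum $R$ as a direct sum and drops the multiplicity coming from $w\in\Sigma_2$, which is how it reaches the stated form. \Cref{rad1} has the same sum-versus-direct-sum slip, over-counting by $r^2$, so it is not a safe template either. What the paper actually uses downstream, in \cref{thm:cellinfo}(b), is only that $\rad(\bbla)=0$ if and only if $\rad(\phi_\la)=\rad(\phi_{\la'})=0$. That does follow from your Steps~1--3.
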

\begin{proof}
    One has $\rad(\bbla) = \{C^\bbla_\bbT ~|~ \phi_\bbla(C^\bbla_\bbT, C^\bbla_{\bbT'}) = 0 \textup{ for all }\bbT' \in M(\bbla)\}$.
    Since $f_{\la,\la'}\neq 0$, it follows from  \cref{prop:phibbla2} that $ \phi_\bbla(C^\bbla_\bbT, C^\bbla_{\bbT'}) = 0$ if and only if either $C^\la_{\tabt_\bbT^1} \in \rad(\phi_\la)$ or $C^{\la'}_{\tabt_\bbT^2} \in \rad(\phi_{\la'})$. 
    The corollary is proved. 
\end{proof}
\subsection{}
We are now in a position to summarize the information we obtained by having explicit formulas for the canonical bilinear form.

\begin{thm}\label{thm:cellinfo}
    Consider the Hu algebra $\cA(m)$. Then,
    \begin{enua}
        \item  The set of absolutely irreducible $\cA(m)$-modules, up to isomorphisms, is given by
        \[
        \Omega_0 = \{e_\la^\nu \in \Omega ~|~ \la \textup{ is }\ell\textup{-restricted}\} \sqcup 
        \{[\la,\la']\in \Omega ~|~ \la, \la' \textup{ are both }\ell\textup{-restricted}\},
        \]
        where $\ell := \textup{ord}(q)$ is the quantum characteristic of $q\in K$.
        \item  The Hu algebra $\cA(m)$ is semisimple if and only if $\cH_q(\Sigma_m)$ is semisimple.
        \item  The Hu algebra $\cA(m)$ is quasi-hereditary if and only if $\cH_q(\Sigma_m)$ is quasi-hereditary.
    \end{enua}
\end{thm}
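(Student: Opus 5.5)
The plan is to feed the explicit bilinear forms of \cref{prop:phibbla1} and \cref{prop:phibbla2} into the general cellular machinery of \cref{prop:philainfo}, together with the known facts about the Murphy basis of $\cH_q(\Sigma_m)$. For the latter we use Dipper--James: over a field, $\phi_\la \neq 0$ if and only if $\la$ is $\ell$-restricted, where $\ell = \textup{ord}(q)$ is the quantum characteristic. Here we identify the Specht-module convention via the Murphy basis $x_\la$; if the Murphy convention yields $\ell$-regular instead, the statement is adjusted by transposition.

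\textbf{Part (a).} We first show that $\phi_\bbla \neq 0$ if and only if the relevant components satisfy $\phi \neq 0$.
\begin{itemize}
\item For $\bbla = e_\la^\nu$, \cref{prop:phibbla1} gives $\phi_\bbla(C_\bbT, C_{\bbT'}) = 2\,\phi_\la(\cdot,\cdot)\,\phi_\la(\cdot,\cdot)$. Since $\ch K \neq 2$, choose $\tabt_\bbT^1 = \tabt_\bbT^2$ and $\tabt_{\bbT'}^1 = \tabt_{\bbT'}^2$ at a pair where $\phi_\la$ is nonzero. This shows $\phi_\bbla \neq 0$ if and only if $\phi_\la \neq 0$.
\item For $\bbla = [\la, \la']$, use \cref{prop:phibbla2} with $w_\bbT = w_{\bbT'} = e$. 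The form is the product $\phi_\la\,\phi_{\la'}$, so it is nonzero if and only if both factors are. The case $w_\bbT = w_{\bbT'} = s_1$ gives the same answer, since $f_{\la,\la'} \in K^\times$.
\end{itemize}
Combining this with \cref{prop:philainfo}(a) and the Dipper--James criterion gives $\Omega_0$ as stated. The simple modules are $W(\bbla)/\rad(\bbla)$, with radicals described by \cref{rad1} and \cref{rad2}.

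\textbf{Part (b).} By \cref{prop:philainfo}(b), $\cA(m)$ is semisimple if and only if $\rad(\bbla) = 0$ for every $\bbla$. By \cref{rad1} and \cref{rad2}, $\rad(\bbla)$ is a direct sum of pieces $\rad(\phi_\la) \otimes S^{\la'}$ and $S^\la \otimes \rad(\phi_{\la'})$. Since Specht modules are nonzero, $\rad(\bbla) = 0$ if and only if $\rad(\phi_\la) = 0$ for every partition $\la$ in the support of $\bbla$. Every $\la \vdash m$ occurs in some support, for instance in $e_\la^{(2)}$. Hence the condition is equivalent to $\rad(\phi_\la) = 0$ for all $\la \vdash m$, which by \cref{prop:philainfo}(b) applied to $\cH_q(\Sigma_m)$ is semisimplicity of $\cH_q(\Sigma_m)$.

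\textbf{Part (c).} The main obstacle is that \cref{prop:philainfo}(c) is only a sufficient condition. The "if" direction is fine: if $\cH_q(\Sigma_m)$ is quasi-hereditary with all $\phi_\la \neq 0$, then part (a)'s computation gives $\phi_\bbla \neq 0$ for all $\bbla$, so $\cA(m)$ is quasi-hereditary. For the converse and its setup, the plan is to use the standard fact that a cellular algebra is quasi-hereditary exactly when $\Omega_0 = \Omega$. This follows from the Graham--Lehrer theory via equality of the number of simples and the number of cell modules, in König--Xi's formulation. With this fact in hand, our computation shows $\Omega_0 = \Omega$ for $\cA(m)$ if and only if $\phi_\la \neq 0$ for all $\la \vdash m$, which is exactly $\Omega_0 = \Omega$ for $\cH_q(\Sigma_m)$.
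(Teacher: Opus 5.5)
Your proposal is correct and is essentially the paper's proof: you feed the explicit forms in \cref{prop:phibbla1} and \cref{prop:phibbla2}, together with \cref{rad1} and \cref{rad2}, into \cref{prop:philainfo} and compare with the corresponding facts for the Murphy basis of $\cH_q(\Sigma_m)$. (With the $x_\la$ convention, $\phi_\la\neq 0$ exactly for $\ell$-restricted $\la$, so your transposition hedge is not needed.) Your explicit appeal to the K\"onig--Xi criterion for the converse in (c) is a genuine improvement in rigor: \cref{prop:philainfo}(c) as stated only gives one direction, and the paper's proof silently uses the full equivalence.
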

\begin{proof}
   The Corollary follows from applying \cref{prop:philainfo}.
For (a), it follows from \cref{prop:phibbla1} that $\phi_{e_\la^\nu}$ is nonzero if and only if $\phi_\la$ is nonzero, which is equivalent to that $\la \vdash m$ is $\ell$-restricted.
On the other hand,
it follows from \cref{prop:phibbla2} that
the form $\phi_{[\la,\la']}$ is nonzero if and only if both $\phi_\la$ and $\phi_{\la'}$ are nonzero.

For (b), by combining \cref{rad1} and \cref{rad2}, one has $\rad(\phi_\bbla) = 0$ for all $\bbla \in \Omega$ if and only if $\rad(\phi_\la) = 0$ for all $\la \vdash m$, which is equivalent to that $\cH_q(\Sigma_m)$ is semisimple.

For (c), by combining \cref{prop:phibbla1} and \cref{prop:phibbla2},  one has that $\phi_{\bbla}\neq 0$ for all $\bbla$ if and only if $\phi_{\la}\neq 0$ for all $\la$.
This completes the proof of the theorem.
\end{proof}
\section{Further Connections}

\subsection{Modular Representations of Hecke Algebras of type $D_{2m}$}

Let $\cH_{(Q,q)}(B_n)$ be the Hecke algebra of type $B_n$ with unequal parameters $(Q, q)$ where $Q = q^b \in K^\times$ for some integer $b$.
When $b$ is sufficiently large, we are in the {\em asymptotic} case (cf. Bonnafe and Iancu \cite{BI03}). 
In this asymptotic regime,
Geck, Iancu, and Pallikaros \cite{gip08} constructed a non-trivial isomorphism between the Specht module $S^{(\la,\mu)}:= S^{\lambda}\otimes S^\mu$
and the cell module $W(\la,\mu)$.
However, there remains a significant and missing special case: the parameter choice $(1, q)$ corresponding to $b=0$. 

The Hecke algebra $\cH^B := \cH_{(1,q)}(B_n)$ is closely related to type $D$, as it contains a subalgebra isomorphic to the Hecke algebra $\cH^D:=\cH_q(D_n)$.
We assume that $n=2m$ is even as the theory can easily recover the odd case.
Suppose that $\ch K \neq 2$ and $q^i \neq -1$ for $1\leq i \leq 2m-1$.
We now outline the findings of Geck \cite{Ge00} and Hu \cite{hu02,Hu09}, regarding the irreducibles for type $D$ using known results from type $B$.

Let $\Pi^{(2)}_{2m}$ be the set of bipartitions of $2m$.
For each $(\lambda, \mu)\in \Pi_i \times \Pi_{2m-i} \subset
\Pi^{(2)}_{2m}$,
denote 
by $S^{(\lambda,\mu)}$ the Specht module of $\cH_q(\Sigma_i \times \Sigma_{2m-i})$, and 
by $V^{(\lambda,\mu)}$ the Specht module over the (split semisimple) Hecke algebra $\cH^B_{(1,v)}$ over the rational function field $\mathbb{Q}(v)$ with unequal parameters $(1,v)$, where $v$ is an indeterminate.

By restricting $V^{(\lambda,\mu)}$ to the type $D$ subalgebra of $\cH^B_{(1,v)}$, 
and subsequently applying the map $\overline{~}$ defined in \cite[Theorem 5.3]{Ge00}, 
Geck obtained the complete list 
$\{\overline{V}^\bbla \}_\bbla$
of non-isomorphic irreducibles, labeled by
\begin{equation}
\label{eq:Dindex}    
\{ (\lambda, +), (\lambda, -) 
~|~ (\lambda,\lambda) \in \Lambda_0\} 
\sqcup
\{ [\lambda, \mu] := \Sigma_2 \cdot (\la,\mu)
~|~ \lambda\neq \mu, \ (\lambda,\mu) \in \Lambda_0\},
\end{equation}
where a bipartition $(\lambda,\mu)$ of $2m$ belongs to $\Lambda_0$ 
if and only if the $\cH_q(\Sigma_i \times \Sigma_{m-i})$-module $D^{(\lambda, \mu)} := S^{(\lambda,\mu)} /\rad S^{(\lambda,\mu)}$ is nonzero.
Observe that in Geck's approach, these simple modules $\overline{V}^\bbla$ are not constructed by quotienting out the radical of a bilinear form. 
Instead, it relies on deciphering the rather intricate map  \cite[Theorem 5.3]{Ge00}.

In \cite{hu02}, Hu observed that one can refine the index set \eqref{eq:Dindex} to the union of $\{ (\lambda, \pm) 
~|~ (\lambda,\lambda) \in \Lambda_0\}$ and $\Pi_m^2\cap \Lambda_0$. 
This subset specifically indexes the simples of the Hu subalgebra $\cala(m)$.
For $\lambda \in \Pi_m$, the induced module $\Ind_{\cH_q(\Sigma_m\times \Sigma_m)}^{\cala(m)}(D^{(\la,\la)})$ splits into non-isomorphic summands denoted by $D^\la_+$ and $D^\la_-$.
Hu further proved that $\overline{V}^{(\la,\pm)} \cong \calf^{-1}(D^\la_\pm)$, where $\calf$ is the Morita equivalence in \eqref{eq:MorD}. 

Our cellularity result gives a more direct realization of the simple $\cH_q(D_{2m})$-modules $\overline{V}^{(\la,\pm)}$ and $\overline{V}^{[\la,\la']}$. In other words:
\begin{cor}
Let $\la, \la' \vdash m$ be such that $\la \neq \la'$.
Then, the  following are isomorphic as simple $\cH_q(D_{2m})$-modules:
\[
\overline{V}^{(\la,+)} \cong \cF^{-1}(L(e_\la^{\iyng{2}})),
\quad
\overline{V}^{(\la,-)} \cong \cF^{-1}(L(e_\la^{\iyng{1,1}})), 
\quad
\textup{and}
\quad
\overline{V}^{[\la,\la']}\cong \cF^{-1}(L([\la,\la'])).
\]
\end{cor}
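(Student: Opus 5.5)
The plan is to chase simples through the equivalence \eqref{eq:MorD}. By \cref{thm:cellinfo}(a) together with \cref{prop:philainfo}(a), the heads $L(\bbla):=W(\bbla)/\rad(\bbla)$, for $\bbla\in\Omega_0$, form a complete set of pairwise non-isomorphic absolutely irreducible $\cA(m)$-modules. Since $\calf$ is an equivalence, the modules $\calf^{-1}(L(\bbla))$ are pairwise non-isomorphic simple $\cH_q(D_{2m})$-modules. It then suffices to match labels with Hu's description: $\overline{V}^{(\la,\pm)}\cong\calf^{-1}(D^\la_\pm)$, and $\overline{V}^{[\la,\la']}$ corresponds under $\calf$ to the $\cA(m)$-simple indexed by $(\la,\la')\in\Pi_m^2\cap\Lambda_0$.

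For the second type, $\bbla=[\la,\la']$ with $\la\ne\la'$, I would combine \cref{thm:CellIsSpecht} with \cref{rad2}. This identifies $W(\bbla)\cong\Ind_{B\otimes B}^{\cA(m)}(S^\la\otimes S^{\la'})$. Its radical is the preimage of $(\rad\phi_\la\otimes S^{\la'})+(S^\la\otimes\rad\phi_{\la'})$ in each of the two $H_w$-summands. Hence $L(\bbla)\cong\Ind_{B\otimes B}^{\cA(m)}(D^\la\otimes D^{\la'})$. This is exactly the simple module Hu attaches to the orbit $[\la,\la']$: the Clifford-theory case with no splitting, since $\la\ne\la'$ means $H_1$ swaps two non-isomorphic $B\otimes B$-simples. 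The identification $\overline{V}^{[\la,\la']}\cong\calf^{-1}(L([\la,\la']))$ then follows from Hu's theorem.

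For the first type, $\bbla=e_\la^\nu$, \cref{thm:CellIsSpecht} and \cref{rad1} give $L(e_\la^\nu)\cong D^\la\wr S^\nu_{(0,f_\la)}$, where $S^\nu_{(0,f_\la)}$ is one-dimensional. On $(D^\la)^{\otimes2}$, $H_1$ acts by $\pm\sqrt{f_\la}$ times the flip, by \eqref{eq:H1a}. On the other hand, $\Ind_{B\otimes B}^{\cA(m)}(D^\la\otimes D^\la)$ surjects onto both $L(e_\la^{\iyng{2}})$ and $L(e_\la^{\iyng{1,1}})$ by Frobenius reciprocity, since each restricts to $D^\la\otimes D^\la$. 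These two are non-isomorphic and each has dimension $(\dim D^\la)^2$, which is half the dimension of the induced module. Therefore the induced module is their direct sum, and $\{D^\la_+,D^\la_-\}=\{L(e_\la^{\iyng{2}}),L(e_\la^{\iyng{1,1}})\}$.

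The main obstacle is the sign convention: showing that $D^\la_+$ corresponds to $\nu=(2)$ rather than $(1,1)$. I would first try to read off Hu's definition of $D^\la_\pm$ as the $\pm\sqrt{f_\la}$-eigenspace of $H_1$ composed with the flip (equivalently, of $h_m^*$) on a fixed vector. I would then match this with \eqref{eq:H1a}. If Hu's normalization is via Geck's map, the check would go through the generic semisimple case, where \cref{lem:md=1}(b) with $m=1$ pins down the sign of the square root.
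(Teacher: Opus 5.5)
Your proposal is correct in outline and follows the paper's route. The paper gives no separate argument: it treats the corollary as an immediate consequence of \cref{thm:CellIsSpecht}, which identifies the cell modules with Hu's Specht modules, together with the radical computations \cref{rad1} and \cref{rad2}. These identify the heads $L(\bbla)$ with Hu's simples, which are then transported along $\calf$ using Hu's theorem.

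Where you deviate is the first type. You recover $\Ind_{B\otimes B}^{\cA(m)}(D^\la\otimes D^\la)\cong L(e_\la^{\iyng{2}})\oplus L(e_\la^{\iyng{1,1}})$ from Frobenius reciprocity and a dimension count. That is a correct, self-contained recovery of Hu's splitting, but it only gives the unordered pair $\{D^\la_+,D^\la_-\}$.

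In the paper the sign comes for free, in exactly the way you first suggest. The module $S^{e_\la^{\iyng{2}}}$ in \cref{thm:CellIsSpecht} is defined, following Hu, as the span of the vectors $\sqrt{f_\la}(x\otimes y)+(x\otimes y)H_1$. On this module $H_1$ acts by $+\sqrt{f_\la}$ times the flip, which is \eqref{eq:H1a} with $\nu=(2)$, and its head is Hu's $D^\la_+$. Carrying out that comparison completes your proof.

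Your fallback via \cref{lem:md=1}(b) would not work. The sign of $\sqrt{f_\la}$ is an independent choice for each $\la\vdash m$. Replacing $\sqrt{f_\la}$ by $-\sqrt{f_\la}$ interchanges the roles of $e_\la^{\iyng{2}}$ and $e_\la^{\iyng{1,1}}$ in \cref{lem:d=2}. The case $m=1$ carries no information about this choice for $m\geq 2$. So the sign assignment in the corollary is only meaningful once $\sqrt{f_\la}$ is taken to be Hu's square root, and it has to be matched by comparing definitions, not derived.
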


\subsection{Rational Cherednik Algebras}\label{sec:GGOR}
For a complex reflection group $W$, a family of quasi-hereditary covers of the Hecke algebra $\cH_q(W)$ were constructed by Ginzburg, Guay, Opdam, and Rouquier \cite{GGOR03}.
Their method utilizes the rather implicit Knizhnik-Zamolodchikov functor of the  rational Cherednik algebra $\bbH_W$ corresponding $W$:
\[
\KZ_W: \cO(\bbH(W)) \to \cH_q(W)\-\Mod,
\]
where $\cO(\bbH(W))$ is the associated category $\cO$, which is a highest weight category whose standard objects are denoted by $\Delta^\bbH_\bbla$.
It is natural to seek for a more direct construction of quasi-hereditary covers which generalizes the Schur functor of Dipper--James' $q$-Schur algebra in type A \cite{DJ86}.

For the wreath product $\Sigma_m \wr \Sigma_d$, Nakano, Xiang, and the second named author constructed (1-faithful) quasi-hereditary covers of the corresponding Hecke algebra $\cH_q(\Sigma_m \wr \Sigma_d)$  by utilizing the theory of wreath modules \cite{LNXII}.
In particular, for the case of the Hu algebra $\cH_q(\Sigma_m \wr \Sigma_2) \cong \cala(m)$, the Specht module denoted by $S^\bbla$ therein coincides with the cell modules $W(\bbla)$ with respect to the cellular algebra structure in \cref{thm:A}.

Let $\cS(n,m)$ be the associated Schur algebra for $\cala(m)$ as in \cite{LNX, LNXII}.
The algebra $\cS(n,m)$ is quasi-hereditary and can be thought of as the analog of the $q$-Schur algebra.
The representation theory of $\cS(n,m)$ has been studied therein using homological methods such as spectral sequences and the Schur functor $\Sch : \cS(n,m)$-Mod $\to \cA(m)$-Mod.
In particular, $\cS(n,m)$-Mod is a highest weight category in which the standard module is denoted by $\Delta^\cS_\bbla$, $\bbla \in \Omega$. 
Moreover, under certain conditions which guarantee both $\KZ_D$ and $\Sch$ are 1-faithful quasi-hereditary covers, one has 
\[
\cF \circ \KZ_D(\Delta^\bbH_\bbla) = \Sch(\Delta^\cS_\bbla)
\]
for all $\bbla \in \Omega$.

It is anticipated that one can lift the cellular basis for $\cA(m)$ constructed in this paper to a basis of $\cS(n,m)$ that plays the role of the semistandard tableaux basis of the $q$-Schur algebra. 
Consequently, one obtains an explicit description of the standard modules $\Delta^\cS_\bbla$ in the highest weight category $\cS(n,m)$-Mod. 
\bibliographystyle{alphaabbr}
\bibliography{master}

\end{document}